\colorlet{darkblue}{blue!90!black}
\colorlet{darkred}{red!90!black}
\definecolor{Green}{rgb}{0.01, 0.75, 0.24}
\newtheorem{theorem}{Theorem}[section]
\newtheorem{lemma}[theorem]{Lemma}
\newtheorem{proposition}[theorem]{Proposition}
\theoremstyle{definition}
\newtheorem{assumption}[theorem]{Assumption}
\crefname{assumption}{Assumption}{Assumptions}
\newtheorem{definition}[theorem]{Definition}
\theoremstyle{remark}
\newtheorem{remark}[theorem]{Remark}
\newcommand{\vn}[1]{{\vert\kern-0.23ex\vert\kern-0.23ex\vert #1 
    \vert\kern-0.23ex\vert\kern-0.23ex\vert}}
\DeclarePairedDelimiter\floor{\lfloor}{\rfloor}
\newcommand\bone{\mathbf{1}}
\newcommand{\indep}{\perp \!\!\! \perp}
\newcommand\cB{\mathcal{B}}
\newcommand\cA{\mathcal{A}}
\newcommand\cC{\mathcal{C}}
\newcommand\cI{\mathcal{I}}
\newcommand\cM{\mathcal{M}}
\newcommand\cmm{\cM}
\newcommand\cP{\mathcal{P}}
\newcommand\cL{\mathcal{L}}
\newcommand\cF{\mathcal{F}}
\newcommand\cff{\cF}
\newcommand{\D}{\partial}
\def\eps{\varepsilon}
\def\les{\lesssim}
\newcommand{\R}{{\mathbb{R}}}
\newcommand{\Rd}{{\R^d}}
\newcommand{\E}{\mathbf{E}}
\newcommand{\bP}{\mathbf{P}}
\newcommand{\PP}{\bP}
\newcommand{\N}{\mathbb{N}}
\def\d{\partial}
\def\({\left(}
\def\){\right)}
\newcommand{\X}{\mathcal{X}}
\begin{document}
\title{A central limit theorem for the Euler method for SDEs with irregular drifts}

\author{Konstantinos Dareiotis$^1$, M\'at\'e Gerencs\'er$^2$ and Khoa L\^e$^3$}
\institute{University of Leeds, UK, \email{k.dareiotis@leeds.ac.uk}
\and TU Wien, Austria, \email{mate.gerencser@tuwien.ac.at} 
\and University of Leeds, UK, \email{k.le@leeds.ac.uk}}
\maketitle
\begin{abstract}
The goal of this article is to establish a central limit theorem for the Euler--Maruyama scheme approximating multidimensional SDEs with elliptic Brownian diffusion, under very mild regularity requirements on the drift coefficients. 
When the drift is H\"older continuous, we show that the limiting law of the rescaled fluctuations around the true solution is characterised  as the unique solution of a hybrid Young--It\^o differential equation. When the drift has positive Sobolev regularity, this limit is characterised by the solution of a transformed SDE.     
Our result is an extension of the results of  Jacod--Kurtz--Protter (1991, 1998)  in which SDEs with differentiable coefficients were considered. 
To compensate for the lack of regularity of the drifts, we utilize the regularisation effect from the non-degenerate noise.
\end{abstract}
\section{Introduction} 
\label{sec:introduction}	

We consider the asymptotic error distributions for the Euler--Maruyama scheme for stochastic differential equations of the form 
	\begin{equs}\label{sde}
		X_t=x_0+\int_0^tb(X_r)dr+\int_0^t \sigma(X_r)dB_r, \quad t\in[0,1].
	\end{equs}
	Herein, $x_0\in \Rd$, $d\ge1$, $B$ is a standard Brownian motion in $\R^d$ on a filtered probability space $(\Omega,\cff,\{\cff_t\}_{t \in [0, 1]},\PP)$, $b:\Rd\to\Rd$  and $\sigma:\Rd\to\R^{d\times d}$
	are measurable functions.  
	The Euler--Maruyama scheme for \eqref{sde} is defined by
	\begin{equs}\label{EM}
		X^n_t=x_0+\int_0^tb(X^n_{\kappa_n(r)})dr+\int_0^t\sigma(X^n_{\kappa_n(r)})dB_r, \quad t\in[0,1],
	\end{equs}
	with the notation $\kappa_n(r)=\floor{nr}/n$.
	When $b,\sigma$ are bounded with bounded first order  derivatives, it is shown  in  \cite{MR1119837,Jacod_Protter} that $X^n$ satisfies a central limit theorem. Namely, the probability law on $C([0, 1]; \R^d)$ of the process
	\begin{equs}\label{def.Vn}
		V^n_t=\sqrt n(X_t-X^n_t)
	\end{equs}
	 converges to the law of the solution $V=(V^{(1)},\ldots, V^{(d)})$ to the system of equations
	\begin{equation} \label{eq:V}
		\begin{aligned}
			V^{(\ell)}_t&=\int_0^t \partial_i b^{(\ell)} (X_r) V^{(i)}_r\,dr+\int_0^t \partial_j \sigma^{(\ell, i)}(X_r)V^{(j)}_r\,dB^{(i)}_r
			\\&\quad+\frac1{\sqrt2}\int_0^t(\partial_j \sigma^{(\ell, i)} \sigma^{(j, k)})(X_r)\,dW^{(k, i)}_r, \quad \ell=1,\ldots,d, 
		\end{aligned}
	\end{equation}
where we have adopted the Einstein summation convention over repeated indices.
In \eqref{eq:V}, $W$ is a new $d\times d$-dimensional Brownian motion defined on an extension of $(\Omega,\cff,\{\cff_t\}_{t \in [0, 1]},\PP)$ and is independent from the $\sigma$-field $\cff$. 
For each $(k, i)$, the process $W^{(k, i)}$ arises  as the limit in law of the process
\begin{equs}
	t\mapsto \sqrt{2n}\int_0^t (B^{(k)}_r-B^{(k)}_{\kappa_n(r)})dB^{(i)}_r.
\end{equs}


Our goal is to derive an analogous central limit theorem for equations whose drift is far from differentiable.  Namely, we deal with the case that $b$ posses regularity of order $\alpha \in (0, 1)$, measured in either a H\"older or a Sobolev scale.  In this setting, the limiting asymptotic distribution can not be directly characterized by \eqref{eq:V} because it involves the derivative of $b$, which only exists in the distributional sense, and evaluating the distribution $\d_i b^{(\ell)}$ at a given point is not a well-defined operation. 
We provide two possible ways of circumventing this problem.

When the drift possesses H\"older  regularity of order $\alpha \in (0, 1)$,   we make sense of the first integral in \eqref{eq:V} as a Young integral (\cite{young}).  
This relies on the observation, which is a reoccurring one in the study of averaging along oscillatory processes, that even though $\d_i b^{(\ell)}(X_r)$ is not well-defined, its integral with respect to $r$ can be given a robust meaning. Indeed, it turns out that the process 
\begin{equs}
 L^{(i,\ell)}_t=\int_0^t \partial_i b^{(\ell)}(X_r)\,dr 
\end{equs}
can be defined and it is $\beta$-H\"older continuous,  for any $\beta < (1+\alpha)/2$.  Hence,  if $V$  is $\gamma$-H\"older continuous, for some  $\gamma > 1 -  (1+\alpha)/2$,   then  integral of $V$ against $L$ can be defined as a  Young integral.  
Then formally one has
\begin{equs}\label{id.bdl}
\int_0^t \partial_i b^{(\ell)} (X_r) V^{(i)}_r\,dr=\int_0^tV^{(i)}\,dL^{(i,\ell)}_r.
\end{equs}
Hence, 
the system \eqref{eq:V} becomes a system of hybrid Young--It\^o differential equations. 

When $b$ only possesses regularity in a Sobolev scale,  we give an alternative approach. 
We rely on a Zvonkin type  transformation that removes the distributional term.
We show that with some function $v:[0,1]\times\R^d\to\R^{d\times d}$ such that for all $(t,x)\in[0,1]\times\R^d$, $v_t(x)$ is an invertible matrix, the process $v_t(X_t)V_t$ can be characterised as the solution of an It\^o equation.

\subsection{Notation}
Let us introduce some notation that will be frequently used throughout the paper.  
For any function $f\colon Q\to V$, where $Q\subset \R^k$ is a Borel set and $(V,|\cdot|)$ is a  normed space, with the notation $\N_0:= \N \cup\{0\}$,  let us introduce the (semi-)norms
\begin{equs}
\|f\|_{C^0(Q,V)}&=\sup_{x\in Q}|f(x)|\,;& &\\
\,[f]_{C^\gamma(Q,V)}&=\sum_{\substack{\ell\in\N_0^k\\|\ell|_1=\hat\gamma}}\sup_{x\neq y\in Q}\frac{|\d^{\ell}f(x)-\d^{\ell}f(y)|}{|x-y|^{\bar\gamma}}\,,&\qquad&\gamma>0,\,\gamma=\hat\gamma+\bar\gamma,\,\hat\gamma\in\N_0,\bar\gamma\in(0,1];\\
\|f\|_{C^\gamma(Q,V)}&=\sum_{\substack{\ell\in\N_0^k\\|\ell|_1< \gamma}} \|\D^\ell f\|_{C^0(Q,V)}+[f]_{C^\gamma(Q,V)},&\qquad &\gamma>0.
\end{equs}
In the above $|\ell|_1=\ell_1+\ldots+\ell_k$ if $\ell=(\ell_1,\ldots,\ell_k)\in \N_0^k$. 
By $C^\gamma(Q,V)$ we denote the space of all measurable functions $f : Q \to V$ such that $\|f\|_{C^\gamma(Q,V)}<\infty$. Frequently the target space will be dropped from the notation, if this does not cause confusion. For example, we will write $L_p(\Omega)$ in place of $L_p(\Omega; V)$, and $C^\gamma(\R^d)$ in place of $C^\gamma(\R^d; V)$, if the target space $V$ is understood from the context. Notice that  $C^0$  denotes the collection of bounded functions. 
Similarly, for $k \in \N$, $C^k$ functions are bounded $(k-1)$-times continuously differentiable and the derivatives of $(k-1)$-order are Lipschitz continuous. For $\gamma \in \R_+ \setminus \N_0$,  $C^\gamma$ are  of course the usual H\"older spaces.  For $\gamma \in (0, 1)$, we will denote by $C^{\gamma+}$ the closure of $C^\infty:= \cap_{k=1}^\infty C^k$ in $C^\gamma$. 
It is easy to see that $C^{\gamma+\varepsilon}\hookrightarrow C^{\gamma+}\hookrightarrow C^{\gamma}$ for every $\gamma\in(0,1)$ and $\varepsilon>0$.
For $\gamma<0$, we denote by $C^\gamma (\R^d)$ the space of all tempered distributions $f $ such that 
\begin{equs}
\|f\|_{C^\gamma(\R^d)} := \sup_{t \in (0,1)} t^{-\gamma/2} \| \cP_t f\|_{C^0(\R^d)} < \infty, 
\end{equs}
where $(\mathcal{P}_t) _{t \geq 0}$ is the heat semigroup associated to the standard Gaussian kernel 
$$p_t(x)=(2 \pi t)^{-d/2}e^{-|x|^2/(2t)}.$$ 
For a vector/ matrix $y$, its transpose is denoted by $y^*$.

Finally, a notational convention: in proofs of statements we
use the shorthand $f\lesssim g$ to mean that there exists a constant $N$ such that $f\leq N g$, and that $N$ does
not depend on any other parameters than the ones specified in the statement.

\section{Formulation and main results for H\"older drift}

We begin with the presentation of our results in the case that $b$ possesses H\"older regularity. The proofs are postponed to \cref{sec:proof_Holder}.

\begin{assumption}\label{asn:sigma}
The coefficient $\sigma$ belongs to $C^2(\R^d; \R^{d \times d})$ and  there exists $\lambda>0$ such that  $|y^* (\sigma \sigma^*)(x) y|\geq \lambda^2|y|^2$ for all $x,y\in\R^d$.
\end{assumption}

\begin{assumption}\label{asn:Holder}
For some $\alpha \in (0, 1)$, $b \in C^{\alpha+}(\R^d; \R^d)$. 
\end{assumption}

We will mostly be dealing with solutions in a weak sense. To be more precise, by a solution of \eqref{sde} we mean the following:
	
\begin{definition}
A solution of \eqref{sde} is a collection $\{ (\Omega, \cF, \mathbb{F}, \bP), X, B\}$ such that 
\begin{enumerate}[(i)]
\item $(\Omega, \cF, \bP)$ is a probability space with a filtration $\mathbb{F}=\{ \cF_t\}_{t \in [0, 1]}$.

\item  $X$ is an $\R^d$-valued process on $\Omega$, adapted to $\mathbb{F}$.

\item $B$ is an $\R^d$-valued $\mathbb{F}$-Wiener process on $\Omega$.

\item Equality \eqref{sde} is satisfied $\bP$-almost surely, for all $t \in [0, 1]$.
\end{enumerate}
\end{definition}	
It is well-known that under the above assumptions, equation \eqref{sde} admits a unique solution (see, \cite{Veret80}).  Next, we want to give a meaning to equation \eqref{eq:V} and its solution. To do this, we need the following proposition.

\begin{proposition}   \label{prop:integral_operator}
Let Assumptions \ref{asn:sigma} and \ref{asn:Holder} hold and  let $\{ (\Omega, \cF,  \mathbb{F}, \bP), X, B\}$ be a solution of \eqref{sde}.  Then, there exists a bounded linear operator 
\begin{equs}
C^{\alpha+}(\R^d)  \ni f \mapsto (Q^Xf)_{t \in [0, 1]}  \in  C^{(1+\alpha)/2}([0, 1]; L_2(\Omega; \R^d ))
\end{equs} such that
\begin{enumerate}[(i)]
\item For all $f \in C^{\alpha +}(\R^d)$, the process $(Q^Xf)_{t \in [0, 1]}$ is $\mathbb{F}$-adapted. 
\label{item:op-on-smooth}\item  For all $f \in C^1(\R^d) $,  with probability one, 
\begin{equs}
(Q^Xf)_t = \int_0^t \nabla f(X_s) \, ds, \quad \text{for all } t \in [0, 1]. 
\end{equs}
 \item \label{it:continuity_in_Lp}For all $p \geq 1$, there exists a constant $N=N(p, \alpha, d)$ such that 
\begin{equs}
\| Q^Xf\|_{C^{(1+\alpha)/2}([0, 1]; L_p(\Omega ; \R^d))} \leq N \| f\|_{C^{\alpha}(\R^d)}.
\end{equs}
\end{enumerate}
\end{proposition}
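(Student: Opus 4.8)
The plan is to construct $(Q^Xf)_t$ first for $f\in C^1$ by the stochastic sewing lemma applied to a suitable germ, and then to extend $Q^X$ from $C^\infty$ to $C^{\alpha+}$ by density. For $f\in C^1$ set
\begin{equs}
A_{s,t} \;:=\; \E\bigl[\,{\textstyle\int_s^t \nabla f(X_r)\,dr}\,\big|\,\cF_s\,\bigr],\qquad 0\le s\le t\le 1.
\end{equs}
Then $A_{s,t}$ is $\cF_s$-measurable, the map $f\mapsto A_{s,t}$ is linear, and its second-order increment is $\delta A_{s,u,t}=A_{s,t}-A_{s,u}-A_{u,t}=\bigl(\E[\,\cdot\mid\cF_s]-\E[\,\cdot\mid\cF_u]\bigr)\int_u^t\nabla f(X_r)\,dr$, so that $\E[\delta A_{s,u,t}\mid\cF_s]=0$; hence the ``predictable'' hypothesis of the sewing lemma holds trivially and the only quantitative input required is the regularisation-by-noise estimate
\begin{equs}\label{eq:qxreg}
\bigl\|\E\bigl[\,{\textstyle\int_s^t \nabla f(X_r)\,dr}\,\big|\,\cF_s\,\bigr]\bigr\|_{L_p(\Omega)}\;\les\;\|f\|_{C^\alpha(\R^d)}\,(t-s)^{(1+\alpha)/2},\qquad p\ge 2,
\end{equs}
with a constant depending only on $p,\alpha,d$ and the constants in \cref{asn:sigma}. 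Granting \eqref{eq:qxreg} one gets $\|A_{s,t}\|_{L_p}\les\|f\|_{C^\alpha}(t-s)^{(1+\alpha)/2}$ and, since $(1+\alpha)/2>1/2$, also $\|\delta A_{s,u,t}\|_{L_p}\le 2\|A_{u,t}\|_{L_p}\les\|f\|_{C^\alpha}(t-s)^{(1+\alpha)/2}$; the stochastic sewing lemma then yields an $\mathbb F$-adapted process $(Q^Xf)_t$ with $(Q^Xf)_0=0$ and $\|(Q^Xf)_t-(Q^Xf)_s-A_{s,t}\|_{L_p}\les\|f\|_{C^\alpha}(t-s)^{(1+\alpha)/2}$, so the triangle inequality gives $\|Q^Xf\|_{C^{(1+\alpha)/2}([0,1];L_p(\Omega))}\les\|f\|_{C^\alpha}$, which is item~(iii) for $f\in C^1$; the range $1\le p<2$ is then free since $\Omega$ is a probability space.

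To prove \eqref{eq:qxreg} I would proceed as follows. Under \cref{asn:sigma,asn:Holder} equation \eqref{sde} is well posed, so $X$ is a Markov process; writing $\rho_{s,r}(x,\cdot)$ for its transition density from time $s$ to time $r$, the Markov property gives $\E[\nabla f(X_r)\mid\cF_s]=\int_{\Rd}\nabla f(y)\,\rho_{s,r}(X_s,y)\,dy$ for $s<r$. Since $b\in C^{\alpha+}\subset C^0$ is bounded while $\sigma\sigma^*$ is $C^2$ and uniformly elliptic, classical heat-kernel theory supplies the Gaussian gradient bound $|\nabla_y\rho_{s,r}(x,y)|\les (r-s)^{-(d+1)/2}\exp\!\bigl(-c|x-y|^2/(r-s)\bigr)$; one way to obtain it is to remove the drift by Girsanov's theorem and invoke the parametrix construction for the drift-free generator, whose coefficients are $C^2$. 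Integrating by parts in $y$ and using $\int_{\Rd}\nabla_y\rho_{s,r}(x,y)\,dy=0$ to subtract the constant $f(X_s)$,
\begin{equs}
\bigl|\E[\nabla f(X_r)\mid\cF_s]\bigr|=\Bigl|\int_{\Rd}\bigl(f(y)-f(X_s)\bigr)\nabla_y\rho_{s,r}(X_s,y)\,dy\Bigr|\;\les\;[f]_{C^\alpha}\!\int_{\Rd}|y-X_s|^\alpha\,|\nabla_y\rho_{s,r}(X_s,y)|\,dy\;\les\;[f]_{C^\alpha}\,(r-s)^{(\alpha-1)/2},
\end{equs}
and integrating over $r\in[s,t]$ (after exchanging $\int dr$ with $\E[\,\cdot\mid\cF_s]$) yields \eqref{eq:qxreg}, in fact with an almost sure bound; for $f\in C^1$ the integration by parts is justified by a routine mollification.

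It remains to assemble the statement. For item~(ii): given $f\in C^1$, the process $\tilde A_t:=\int_0^t\nabla f(X_r)\,dr$ is continuous, $\mathbb F$-adapted, vanishes at $t=0$, and $\tilde A_t-\tilde A_s-A_{s,t}=\int_s^t\nabla f(X_r)\,dr-A_{s,t}$ has $L_p$-norm $\les\|\nabla f\|_{C^0}(t-s)$ and vanishing $\cF_s$-conditional expectation; the uniqueness part of the sewing lemma therefore forces $Q^Xf=\tilde A$, which is item~(ii) (note $C^1\subset C^{\alpha+}$). Linearity of $Q^X$ is inherited from linearity of $f\mapsto A_{s,t}$ and of the sewing map. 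Finally, since $C^\infty$ is dense in $C^{\alpha+}$ for $\|\cdot\|_{C^\alpha}$ and $Q^X\colon(C^\infty,\|\cdot\|_{C^\alpha})\to C^{(1+\alpha)/2}([0,1];L_2(\Omega;\Rd))$ is bounded, $Q^X$ extends uniquely to a bounded linear operator on $C^{\alpha+}$ still satisfying item~(iii); the extension remains $\mathbb F$-adapted because for each fixed $t$ the $\cF_t$-measurable elements of $L_2(\Omega)$ form a closed subspace, and the time-regularity provides a continuous adapted modification, giving item~(i).

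The step I expect to be the main obstacle is \eqref{eq:qxreg} — concretely, establishing the Gaussian gradient estimate for the transition density of \eqref{sde} when the drift is only bounded and measurable. Everything else is a routine application of the stochastic sewing lemma together with standard density and modification arguments.
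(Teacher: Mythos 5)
Your proposal is correct in substance and shares the paper's skeleton: both arguments reduce to $f\in C^1$, apply the stochastic sewing lemma to a conditional-expectation germ whose second-order increment has vanishing $\cF_s$-conditional expectation (so only the bound on the germ itself is needed), remove the drift via Girsanov, identify the sewn process with $\int_0^\cdot\nabla f(X_s)\,ds$ by the uniqueness part of the sewing lemma, and then extend from smooth $f$ to $C^{\alpha+}$ by density; your treatment of items (i) and (ii) and of the range $1\le p<2$ is fine. The genuine difference is the key analytic input. The paper never differentiates the transition kernel in the forward variable: it sews the germ $\E^s\int_s^t\bigl(f(X_r)-f(X_r+z)\bigr)\,dr$ for a fixed shift $z$, bounds it using only the classical semigroup regularisation estimate $|\mathcal{P}_tf(x)-\mathcal{P}_tf(y)|\lesssim\|f\|_{C^\alpha}|x-y|\,t^{(\alpha-1)/2}$, obtains $\bigl\|\int_s^t(f(X_r)-f(X_r+z))\,dr\bigr\|_{L_p}\lesssim|z|\,\|f\|_{C^\alpha}|t-s|^{(1+\alpha)/2}$, and recovers the gradient by dividing by $|z|$ and letting $z\to0$. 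You instead work with $\E^s\nabla f(X_r)$ directly and integrate by parts against $\nabla_y\rho_{s,r}(x,y)$, so your key input is a Gaussian bound on the kernel gradient in the \emph{forward} variable. That bound is available for the driftless diffusion with $\sigma\in C^2$ (it is regularity of the adjoint, i.e.\ Fokker--Planck, kernel and genuinely uses differentiability of $a=\sigma\sigma^*$), but it is a stronger and less standard input than the backward-variable estimate the paper cites, and it is precisely the estimate that breaks down if the irregular drift is kept inside the kernel, since the adjoint equation then contains $\mathrm{div}(b\rho)$. For this reason your parenthetical "remove the drift by Girsanov and invoke the parametrix" has to be implemented with care: neither conditional expectations nor pointwise density bounds transfer directly under the change of measure, so the correct order of operations (and the one the paper uses) is to run the germ/sewing argument entirely for the driftless process, obtain there the unconditional bound $\bigl\|\int_s^t\cdots\,dr\bigr\|_{L_p}\lesssim\|f\|_{C^\alpha}|t-s|^{(1+\alpha)/2}$, and only then transfer this moment bound to the original $X$ by H\"older's inequality with the Girsanov density (which has all moments because $b$ is bounded and $\sigma$ uniformly elliptic). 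With that reordering your argument closes; what the paper's finite-difference trick buys is that only the standard backward-variable gradient estimate for $\mathcal{P}_t$ is needed, while your route is slightly more direct but pays with a heavier kernel estimate.
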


\begin{remark}    \label{rem:Holder-reg-a.s.}
Let  $f \in C^{\alpha+}(\R^d)$. By \eqref{it:continuity_in_Lp} of the above result and \cref{lem:interchange-LpCa}, it follows that for all $p \geq 1$ and $\alpha'  \in (0, (1+\alpha)/2)$,  there exists $N=N(p,\alpha,\alpha', d)$ such that
\begin{equs}
\| Q^Xf \|_{L_p(\Omega; C^{\alpha'}([0,1];  \R^d))}
 \leq N \| f\|_{C^\alpha(\R^d)}.
\end{equs}
In particular, if $Z \in C^\beta([0, 1]; \R^d)$, then integrals of the form $\int Z_r d  (Q^Xf)_r$  can be defined as Young integrals provided that $\beta+\alpha>1$. 
\end{remark}

\begin{remark}               \label{rem:In_C_alpha_almost_surely}
If $f \in C^{\alpha+}(\R^d; \R^d)$ we will denoted by $Q^Xf$ the $\R^{d\times d}$-valued process given by $(Q^Xf)^{(i, j)}:=(Q^Xf^{(i)})^{(j)}$,  $i, j \in  \{1, ..., d\}$. 
\end{remark}

\begin{definition}  \label{def:solution_of_system}
A solution of the system  \eqref{sde},\eqref{eq:V} is a collection $\{ (\Omega, \cF,  \mathbb{F}, \bP),  X, V, B, W\}$ such that 
\begin{enumerate}[(i)]
\item $\{(\Omega, \cF, \mathbb{F}, \bP), X,  B\}$ is a solution of \eqref{sde}. 

\label{item:def_sol_1}

\item  $W$ is an $\R^{d\times d}$-valued $\mathbb{F}$-Wiener process on $\Omega$,  independent of $(X,B)$. 
\label{item:def_sol_2}

\item $V$ is an $\mathbb{F}$-adapted process and there exists $\beta > (1-\alpha)/2$ such that 
$\bP \big( V \in C^\beta([0, 1]; \R^d)\big) =1$. 
\label{item:def_sol_3}

\item   \label{item:def_sol_4} For all  $\ell \in \{1, ..., d\}$,   
\begin{equs}   \label{eq:Equation_V_Holder_indices}
			V^{(\ell)}_t&=
			\int_0^t V^{(j)}_r\,d(Q^X b )^{( \ell ,j)}_ r+
			\int_0^t \partial_j \sigma^{(\ell, i)}(X_r)V^{(j)}_r\,dB^{(i)}_r
\\
&\quad+
\frac1{\sqrt2}\int_0^t(\sigma^{(j, k)} \partial_j \sigma^{(\ell, i)} )(X_r)\,dW^{(k, i)}_r,
	\end{equs}
 $\bP$-almost surely, for all $t \in [0, 1]$. 
\end{enumerate}
\end{definition}

To ease the notation, we will use the shorthand version of \eqref{eq:Equation_V_Holder_indices} 
\begin{equs}
    V_t= \int_0^t V_r \,d(Q^Xb )_r+\int_0^t \nabla\sigma  (X_r)V_r\,dB_r
+\frac1{\sqrt2}\int_0^t\sigma  \nabla \sigma (X_r)\,dW_r,  
\end{equs}
where the multiplication of matrices/tensors is understood by \eqref{eq:Equation_V_Holder_indices}. 
	
For the next theorem let us introduce the notation 
\begin{equs}
    \mathcal{C}:= \big(C([0,1]; \R^d)\big)^3 \times  C([0,1]; \R^{d\times d}).
\end{equs}
          
\begin{theorem}        \label{thm:existence_uniqueness}   
There exists a solution  $\{ (\Omega, \cF,  \mathbb{F}, \bP),  X, V, B, W\}$ of the system \eqref{sde},\eqref{eq:V}.  Moreover, if $\{ (\Omega', \cF',  \mathbb{F}', \bP'),  X', V', B', W'\}$ is another  solution, then the laws of $(X, V, B, W)$ and  $(X', V', B', W')$ on $\mathcal{C}$ coincide.  
\end{theorem}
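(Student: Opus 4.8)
The plan is to establish existence by a compactness argument and uniqueness by reducing the hybrid Young--It\^o equation to a linear equation whose coefficients enjoy enough regularity for a pathwise Gronwall estimate. For existence, I would first observe that the classical results of Jacod--Kurtz--Protter apply to mollified coefficients: let $b^\eps$ be a smoothing of $b$ with $b^\eps \to b$ in $C^{\alpha}$, keep $\sigma$ fixed (it is already $C^2$), and let $\{X^\eps, V^\eps, B, W\}$ be the corresponding solution of \eqref{eq:V}. By \cref{prop:integral_operator} applied to the solution $X^\eps$ (with the bound in item \eqref{it:continuity_in_Lp} uniform in $\eps$ since $\|b^\eps\|_{C^\alpha} \lesssim \|b\|_{C^\alpha}$), together with the identification in item \eqref{item:op-on-smooth}, one has $\int_0^\cdot \nabla b^\eps(X^\eps_r)V^\eps_r\, dr = \int_0^\cdot V^\eps_r\, d(Q^{X^\eps}b^\eps)_r$, so $V^\eps$ already solves the hybrid system driven by $b^\eps$. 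Then I would derive uniform-in-$\eps$ moment bounds: for $X^\eps$ these are standard (e.g.\ via the Zvonkin/It\^o--Tanaka a priori estimates that hold uniformly for drifts bounded in $C^\alpha$), and for $V^\eps$ a Gronwall argument in $L_p$ using \cref{prop:integral_operator}, the boundedness of $\nabla\sigma$, and the Young/BDG inequalities gives $\sup_\eps \E\|V^\eps\|_{C^{\beta}}^p < \infty$ for suitable $\beta \in ((1-\alpha)/2, (1+\alpha)/2)$. These bounds yield tightness of the laws of $(X^\eps, V^\eps, B, W, Q^{X^\eps}b^\eps)$ on $\mathcal{C} \times C^{\beta'}([0,1];\R^{d\times d})$ for slightly smaller exponents.

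The next step is to pass to the limit along a convergent subsequence. Stability of the solution map for \eqref{sde} under $b^\eps \to b$ gives $X^\eps \to X$ in probability, and I would upgrade the convergence of the drift-integral operators, showing $Q^{X^\eps}b^\eps \to Q^X b$ in $C^{\beta'}([0,1];L_p(\Omega))$; this is where the linearity and boundedness of the operator $f \mapsto Q^X f$ from \cref{prop:integral_operator} together with the continuity of $X \mapsto Q^X(\cdot)$ in an appropriate sense are essential — one writes $Q^{X^\eps}b^\eps - Q^X b = (Q^{X^\eps} - Q^X)b + Q^{X^\eps}(b^\eps - b)$ and controls the second term by the uniform operator bound times $\|b^\eps - b\|_{C^\alpha}$. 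Given this, the Young integral $\int_0^\cdot V^\eps_r\, d(Q^{X^\eps}b^\eps)_r$ converges to $\int_0^\cdot V_r\, d(Q^X b)_r$ by continuity of the Young integral under joint convergence of integrand and integrator in H\"older norms (using $\beta + (1+\alpha)/2 > 1$), while the It\^o terms pass to the limit by the usual martingale/Skorokhod argument; the limiting filtration is the one generated by $(X,V,B,W)$ augmented, and $W$ remains an independent Wiener process since independence is a closed condition. This produces a solution in the sense of \cref{def:solution_of_system}.

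For uniqueness I would argue that the law of $(X,B)$ is determined by weak uniqueness for \eqref{sde}, and that conditionally on $(X,B)$ (hence on $Q^X b$, which is $\sigma(X,B)$-measurable via its adaptedness and the approximation characterisation) the equation \eqref{eq:Equation_V_Holder_indices} for $V$ is a \emph{linear} equation: a hybrid Young--It\^o linear SDE with a drift-type integrator $Q^X b$ that is a.s.\ in $C^{\alpha'}$ for any $\alpha' < (1+\alpha)/2$, bounded coefficients $\nabla\sigma(X)$, and an additive $W$-term. Two solutions $V, V'$ on the same probability space (with the same $X,B,W$) have difference $U = V - V'$ solving the homogeneous version $U_t = \int_0^t U_r\, d(Q^Xb)_r + \int_0^t \nabla\sigma(X_r)U_r\, dB_r$; a pathwise estimate combining the Young estimate (for the first integral, valid since $U \in C^\beta$ with $\beta + \alpha' > 1$) with the BDG inequality (for the stochastic integral) and a Gronwall/fixed-point argument on a small time interval, iterated to $[0,1]$, forces $U \equiv 0$. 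Since the two candidate solutions may live on different spaces, I would phrase uniqueness as equality of laws: realise both on a common space via their laws of $(X,V,B,W)$ restricted appropriately, or more cleanly, invoke a Yamada--Watanabe-type argument — pathwise uniqueness of the linear equation in $V$ plus existence of a weak solution gives uniqueness in law of $(X,V,B,W)$ on $\mathcal{C}$. The main obstacle I anticipate is the stable convergence $Q^{X^\eps} b^\eps \to Q^X b$ jointly with $V^\eps \to V$ in H\"older topology strong enough to pass to the limit in the Young integral; controlling $(Q^{X^\eps} - Q^X)b$ requires quantitative continuity of the stochastic sewing/regularisation construction underlying \cref{prop:integral_operator} in the driving diffusion $X$, which is the technical heart of the argument.
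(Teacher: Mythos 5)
Your uniqueness argument is essentially the paper's: strong uniqueness for \eqref{sde}, then pathwise uniqueness for the homogeneous linear hybrid Young--It\^o equation satisfied by $V-V'$, then a Yamada--Watanabe argument to pass to uniqueness in law (\cref{prop:pathwise_uniqueness}, \cref{prop:weak_uniqueness}). One caveat: "a Gronwall/fixed-point argument on a small time interval" cannot be run on deterministic intervals, because the Young bound for $\int U\,d(Q^Xb)$ carries the \emph{random} factor $[Q^Xb]_{C^{\tilde\alpha}}$; the paper localises with the stopping times $\rho_n$ (where this seminorm accumulates $\delta_1$) and $\tau$, which is the correct implementation of your idea and is unproblematic there because one only propagates the value $0$. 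Your existence route, by contrast, is genuinely different: you mollify $b$ and pass to the limit in the Jacod--Kurtz--Protter solutions $V^\eps$, whereas the paper runs the compactness/Skorokhod argument directly on the Euler quantities $(X,V^n,B,W^n)$, which proves \cref{thm:existence_uniqueness} and \cref{thm:main_Holder} simultaneously. Also, the step you single out as the technical heart, $(Q^{X^\eps}-Q^X)b\to 0$, is softer than you suggest: since $b\in C^{\alpha+}$ one approximates $b$ by smooth functions, uses the uniform operator bound of \cref{prop:integral_operator} for the error, and for smooth $f$ uses $X^\eps\to X$ and item \eqref{item:op-on-smooth}; this is exactly the three-$\eps$ scheme of \cref{lem:limit-singlar-term}.

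The genuine gap is the uniform moment bound $\sup_\eps\E\|V^\eps\|^p_{C^\beta}<\infty$, which you claim follows from "a Gronwall argument in $L_p$ using \cref{prop:integral_operator}, the boundedness of $\nabla\sigma$, and the Young/BDG inequalities". As stated this does not close. The Young estimate for the drift term gives, on an interval $[s,t]$, a bound of the form $[L^\eps]_{C^{\alpha'}([s,t])}\big(|V^\eps_s|+[V^\eps]_{C^\beta([s,t])}\big)$ with $L^\eps=Q^{X^\eps}b^\eps$ random, so in $L_p(\Omega)$ you face a product of two random variables and cannot absorb the $[V^\eps]_{C^\beta}$ term by choosing $|t-s|$ small deterministically; absorbing it forces you onto random intervals on which $[L^\eps]_{C^{\alpha'}}$ is small, and iterating over the random number of such intervals produces geometric growth, so closing the bound would require exponential moments of $[Q^{X^\eps}b^\eps]_{C^{\alpha'}}^{1/\alpha'}$ uniformly in $\eps$ --- something \cref{prop:integral_operator} (polynomial moments) does not give and your sketch does not address. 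Using the classical linear SDE \eqref{eq:V} for the smooth drift is no way out either, since $\|\nabla b^\eps\|_{C^0}\sim\eps^{\alpha-1}$ blows up. The paper avoids this issue structurally: its approximations are $V^n=\sqrt n(X-X^n)$, and \cref{lem:uniform-bounds} bounds $\|X-X^n\|_{C^{1/2}([S,T];L_p(\Omega))}$ on \emph{deterministic} small intervals via \cite[Lemma 6.4]{ButDarGEr} and \cref{lem:quad1}, whose constants are deterministic, so the buckling/iteration involves a fixed number of intervals. One way to repair your route is to note that for each fixed $\eps$ your $V^\eps$ is the weak limit of $\sqrt n(X^\eps-X^{\eps,n})$ and that the constant in \cref{lem:uniform-bounds} depends on the drift only through $\|b^\eps\|_{C^\alpha}\lesssim\|b\|_{C^\alpha}$, then use lower semicontinuity of the norm under weak convergence --- but that essentially re-imports the Euler-scheme analysis you were trying to bypass.
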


The law of the unique solution $(X, V, B, W)$  of the system \eqref{sde},\eqref{eq:V} will be denoted by $\mu$. 
We are now in position to state our main result for H\"older drifts. 

\begin{theorem} \label{thm:main_Holder}
Let  Assumptions \ref{asn:sigma} and \ref{asn:Holder} and let $\{ (\Omega, \cF, \mathbb{F}, \bP), X, B\}$ be a solution of \eqref{sde}. Define the processes $X^n, W^n$, and $V^n$ by 
\begin{equs}
dX^n_t & = b(X^n_{\kappa_n(t)})dt+ \sigma(X^n_{\kappa_n(t)})dB_t,  \qquad X^n_0=x_0, 
\\
dW^n_t & =   \sqrt{2n}  (B_t-B_{\kappa_n(t)})  \otimes \, dB_t, \qquad W^n_0=0, 
\\
V^n_t&= \sqrt{n}(X_t-X^n_t).
\end{equs}
Then, the law of $(X, V^n,  B, W^n)$ on $\mathcal{C}$ converges weakly to $\mu$. 
\end{theorem}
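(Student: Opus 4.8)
\emph{Strategy.} I would prove the convergence via tightness, identification of subsequential limits, and uniqueness in law (the last being exactly \cref{thm:existence_uniqueness}). Compactness and identification both rest on uniform-in-$n$ a priori bounds, and the crucial one asserts that the rescaled drift discretization error is as time-regular as $Q^Xb$: there is a $\theta>1/2$ such that, for every $g\in C^{\alpha+}(\R^d)$ and every $p<\infty$,
\[
\sup_{n}\,\Bigl\|\,\sqrt n\int_s^t\bigl(g(X_r)-g(X^n_{\kappa_n(r)})\bigr)\,dr\,\Bigr\|_{L_p(\Omega;\R^d)}\;\le\;N\,\|g\|_{C^\alpha(\R^d)}\,|t-s|^{\theta}.
\]
This is where the regularisation by the nondegenerate noise enters; the proof should be a stochastic sewing argument along the lines of the one behind \cref{prop:integral_operator}, now carried out for the pair $(X,X^n)$. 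Together with the Burkholder--Davis--Gundy inequality for the martingale part of $V^n$ (using that $\sigma$ is Lipschitz and the strong rate $\|\sup_t|X_t-X^n_t|\|_{L_p}\lesssim n^{-1/2}$) and the classical estimates $\sup_n\E[W^n,W^n]_1<\infty$ and $\E|W^n_t-W^n_s|^p\lesssim|t-s|^{p/2}$, this gives that $V^n$ is bounded in $L_p(\Omega;C^\beta([0,1];\R^d))$ for all $p<\infty$ and all $\beta\in((1-\alpha)/2,1/2)$, uniformly in $n$. Kolmogorov's continuity criterion then yields tightness of the laws of $(X,V^n,B,W^n)$ on $\mathcal C$.

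\emph{Passing to a subsequential limit.} Fix a weakly convergent subsequence and, by Skorokhod's representation theorem, realise copies of $(X,V^n,B,W^n)$ on a common probability space so that they converge almost surely to $(\bar X,\bar V,\bar B,\bar W)$; the corresponding copies of $X^n=X-n^{-1/2}V^n$ then also converge to $\bar X$. Standard arguments show that $\{(\bar\Omega,\bar\cF,\bar{\mathbb F},\bar\bP),\bar X,\bar B\}$ is a solution of \eqref{sde} (under \cref{asn:sigma,asn:Holder} the Euler scheme converges to the true solution), that $\bar W$ is an $\bar{\mathbb F}$-Wiener process independent of $(\bar X,\bar B)$ --- this is the classical $\cF$-stable limit theorem for the iterated integrals $\sqrt{2n}\int_0^\cdot(B_r-B_{\kappa_n(r)})\otimes dB_r$, see e.g. \cite{Jacod_Protter} --- and, by the uniform moment bound and Fatou's lemma, that $\bar V\in C^\beta([0,1];\R^d)$ almost surely with $\beta>(1-\alpha)/2$. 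It remains to pass to the limit in $V^n$ and verify that $\bar V$ satisfies \eqref{eq:Equation_V_Holder_indices}; then \cref{thm:existence_uniqueness} forces the limit law to be $\mu$, and since every subsequential limit is $\mu$, the full sequence converges weakly to $\mu$.

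\emph{The diffusion term.} Write $V^n_t=\sqrt n\int_0^t(b(X_r)-b(X^n_{\kappa_n(r)}))\,dr+\sqrt n\int_0^t(\sigma(X_r)-\sigma(X^n_{\kappa_n(r)}))\,dB_r$. The $\sigma$-part is treated as in \cite{MR1119837,Jacod_Protter}: split $\sigma(X_r)-\sigma(X^n_{\kappa_n(r)})=[\sigma(X_r)-\sigma(X^n_r)]+[\sigma(X^n_r)-\sigma(X^n_{\kappa_n(r)})]$ and Taylor expand (here \cref{asn:sigma} gives bounded $\nabla\sigma$, $\nabla^2\sigma$). The first bracket contributes $\int_0^t\bigl(\int_0^1\nabla\sigma(X^n_r+\theta n^{-1/2}V^n_r)\,d\theta\bigr)V^n_r\,dB_r\to\int_0^t\nabla\sigma(\bar X_r)\bar V_r\,d\bar B_r$. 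For the second, using $X^n_r-X^n_{\kappa_n(r)}=\sigma(X^n_{\kappa_n(r)})(B_r-B_{\kappa_n(r)})+O(n^{-1})$, the leading term integrated against $dB^{(i)}_r$ equals $\frac{1}{\sqrt2}\int_0^t(\sigma^{(j,k)}\partial_j\sigma^{(\ell,i)})(X^n_{\kappa_n(r)})\,dW^{n,(k,i)}_r\to\frac{1}{\sqrt2}\int_0^t(\sigma^{(j,k)}\partial_j\sigma^{(\ell,i)})(\bar X_r)\,d\bar W^{(k,i)}_r$, while the $O(n^{-1})$ drift contribution of $X^n_r-X^n_{\kappa_n(r)}$ and the second-order Taylor remainder go to $0$ in $L_2$. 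This uses that $(B,W^n)$ is a good sequence of semimartingales with $\sup_n\E[W^n,W^n]_1<\infty$, that the integrands converge uniformly on compacts in probability (as $n^{-1/2}V^n\to0$ and $X^n\to\bar X$), and the standard limit theorem for stochastic integrals.

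\emph{The drift term, and the main obstacle.} Since $\nabla b$ is only a distribution, the drift term $\sqrt n\int_0^t(b(X_r)-b(X^n_{\kappa_n(r)}))\,dr$ cannot be Taylor expanded; the plan is to reduce it to the smooth case by mollification, keeping the error controlled uniformly in $n$. Set $b^\eps=b*\rho_\eps$, so $b^\eps\to b$ in $C^\alpha(\R^d)$ by \cref{asn:Holder}, and $R^{n,\eps}_t:=\sqrt n\int_0^t\bigl((b-b^\eps)(X_r)-(b-b^\eps)(X^n_{\kappa_n(r)})\bigr)\,dr$. The decisive step is the uniform bound $\sup_n\|R^{n,\eps}\|_{L_p(\Omega;C([0,1];\R^d))}\le N\|b-b^\eps\|_{C^\alpha(\R^d)}\to0$ as $\eps\downarrow0$, which is the $g=b-b^\eps$ instance of the displayed estimate and is the genuinely hard analytic point --- note that a naive pointwise bound on $R^{n,\eps}$ blows up like $n^{(1-\alpha)/2}$, so the cancellation supplied by the noise is indispensable. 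For each fixed $\eps$, the smooth quantity $\sqrt n\int_0^t(b^\eps(X_r)-b^\eps(X^n_{\kappa_n(r)}))\,dr$ is handled as in the differentiable case: by It\^o's formula and stochastic Fubini the left-point quadrature error $\sqrt n\int_0^t(b^\eps(X_r)-b^\eps(X_{\kappa_n(r)}))\,dr$ tends to $0$ in $L_2$, while, writing $t_i=i/n$, the remaining piece $\sqrt n\int_0^t(b^\eps(X_{\kappa_n(r)})-b^\eps(X^n_{\kappa_n(r)}))\,dr=\frac1n\sum_{t_i<t}\bigl(\int_0^1\nabla b^\eps(X^n_{t_i}+\theta n^{-1/2}V^n_{t_i})\,d\theta\bigr)V^n_{t_i}$ is a Riemann sum converging to $\int_0^t\nabla b^\eps(\bar X_r)\bar V_r\,dr$. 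Finally, as $\eps\downarrow0$, $\int_0^\cdot\nabla b^\eps(\bar X_r)\bar V_r\,dr=\int_0^\cdot\bar V_r\,d(Q^{\bar X}b^\eps)_r\to\int_0^\cdot\bar V_r\,d(Q^{\bar X}b)_r$: indeed $Q^{\bar X}b^\eps=\int_0^\cdot\nabla b^\eps(\bar X_r)\,dr$ by part \eqref{item:op-on-smooth} of \cref{prop:integral_operator}, $Q^{\bar X}b^\eps\to Q^{\bar X}b$ in $L_p(\Omega;C^{\alpha'}([0,1];\R^d))$ for any $\alpha'<(1+\alpha)/2$ by part \eqref{it:continuity_in_Lp} of \cref{prop:integral_operator} and \cref{rem:Holder-reg-a.s.}, and choosing $\beta\in((1-\alpha)/2,1/2)$ and $\alpha'$ with $\beta+\alpha'>1$ makes the Young integrals converge. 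A routine double-limit argument (triangle inequality, letting $n\to\infty$ first and then $\eps\downarrow0$, using the uniform smallness of $R^{n,\eps}$) gives $\sqrt n\int_0^\cdot(b(X_r)-b(X^n_{\kappa_n(r)}))\,dr\to\int_0^\cdot\bar V_r\,d(Q^{\bar X}b)_r$ in probability. Together with the diffusion-term limit this identifies $\bar V$ as a solution of \eqref{eq:Equation_V_Holder_indices}, which completes the proof.
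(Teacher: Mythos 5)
Your overall architecture is the same as the paper's: uniform-in-$n$ bounds for $V^n$ giving tightness, Skorokhod representation, identification of every subsequential limit as a solution of the system \eqref{sde},\eqref{eq:V} (with the diffusion terms treated via the Kurtz--Protter limit theorem for stochastic integrals and \cref{prop:convergence-BMs}, and the singular drift term via smooth approximation in $C^{\alpha+}$ with a uniform-in-$n$ error bound), and then weak uniqueness from \cref{thm:existence_uniqueness} to upgrade subsequential to full convergence. Your mollification/double-limit treatment of the drift term is the same $3\varepsilon$ mechanism as the paper's \cref{lem:limit-singlar-term}, which works with the density of $C^\infty$ in $C^{\alpha+}$ and a uniform operator bound instead of an explicit mollifier; the quadrature input you need is exactly \cref{lem:quad1}.

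The one step that does not work as you state it is the ``crucial'' a priori estimate
\begin{equation*}
\sup_n\Bigl\|\sqrt n\int_s^t\bigl(g(X_r)-g(X^n_{\kappa_n(r)})\bigr)\,dr\Bigr\|_{L_p(\Omega)}\le N\|g\|_{C^\alpha}|t-s|^\theta ,
\end{equation*}
presented as a standalone sewing estimate from which tightness follows. Split the integrand as $\bigl(g(X_r)-g(X^n_r)\bigr)+\bigl(g(X^n_r)-g(X^n_{\kappa_n(r)})\bigr)$: the second piece is genuinely a quadrature term and is handled by stochastic sewing (\cref{lem:quad1}), but the first piece can only be bounded through a regularisation estimate of the type \cite[Lemma 6.4]{ButDarGEr}, whose right-hand side involves $\|\sqrt n(X-X^n)\|_{C^{1/2}([s,t];L_p(\Omega))}$, i.e.\ the very norm of $V^n$ that tightness is supposed to control. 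So with $g=b$ the estimate is circular as a starting point; it becomes a true statement only after one runs an absorption argument. This is precisely what the paper's \cref{lem:uniform-bounds} does: it derives $\|Y^n\|_{C^{1/2}([S,T];L_p)}\le N_0\bigl(|T-S|^{1/2}\|Y^n\|_{C^{1/2}([S,T];L_p)}+\|Y^n_S\|_{L_p}+n^{-1/2}\bigr)$ and buckles over finitely many short intervals, iterating to propagate $\|Y^n_{S_k}\|_{L_p}\lesssim n^{-1/2}$. (Citing the strong rate $\|\sup_t|X_t-X^n_t|\|_{L_p}\lesssim n^{-1/2}$ as a black box does not remove the issue, since the term $|t-s|\,\|V^n\|_{C^{1/2}L_p}$ still has to be absorbed; and note the paper cannot simply quote that rate either---it reproves it along the way.) Once this bootstrap is supplied, your use of the estimate in the identification step is legitimate, because there the constant is allowed to depend on $\sup_n\|V^n\|_{C^\beta([0,1];L_p(\Omega))}$, which is by then known to be finite; this is exactly how the uniform operator bound \eqref{eq:Op-norm-bounded} enters the paper's \cref{lem:limit-singlar-term}. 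With that repair your proposal matches the paper's proof.
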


\begin{remark}
The endpoint case $\alpha=0$, corresponding to merely bounded and measurable $b$, appears to be a quite challenging problem, in fact it is not even clear whether the factor $\sqrt{n}$ is the correct one.
Even a much simpler problem, determining the sharp rate of convergence of
\begin{equs}
I_n:=\int_0^1 \big(b(B_t)-b(B_{\kappa_n(t)})\big) \,dt
\end{equs}
for general bounded measurable function $b$ is an open question. 
The best known rate is the bound $\|I_n\|_{L^p(\Omega)}\lesssim n^{-1/2}\sqrt{\log n}$ for all $p\ge2$. 
The case $p=2$ is established in \cite{DG} by direct estimations while the case $p>2$ is established in \cite{leBMO} using the known estimate for $p=2$ and the John--Nirenberg inequality for stochastic processes of bounded mean oscillation. 
\end{remark}

\section{Formulation and main results for Sobolev  drift}
	
In this section, we present our results in the case that $b$ possesses positive regularity in a Sobolev scale. More precisely, \cref{asn:Holder} is replaced by the following \cref{asn:Sobolev}, while \cref{asn:sigma}  remains in force. The proofs are postponed to \cref{sec:proofs_Sobolev}.
\begin{assumption}\label{asn:Sobolev}
For some $\alpha>0$ and $m\geq 1$ the coefficient $b$  belongs to $W^\alpha_m(\R^d; \R^d)$, that is,   
\begin{equ}
 \| b\|_{W^\alpha_m(\R^d; \R^d)}^m:=\int_{\R^d} |b(x)|^m\, dx + \int_{\R^d}\int_{\R^d}\frac{|b(x)-b(y)|^m}{|x-y|^{d+\alpha m}}\,dx\,dy<\infty.
\end{equ}
Moreover, $b$ is bounded. 
\end{assumption}

\begin{remark}
    Notice that if Assumption \ref{asn:Sobolev} is satisfied, then for any $m' \geq 1$ there exists $\alpha' \in (0, \alpha)$ such that $b \in W^{\alpha'}_{m'}(\R^d; \R^d)$. Therefore, without loss of generality we can assume that $m \geq \max\{2, d\}$. 
\end{remark}

The limiting distribution in this case will be characterised by means of the Zvonkin transform. In order to proceed with the statement, we will  need some standard facts from PDE theory.  Under \cref{asn:Sobolev}, $b$ belongs to $L_{p_0}(\Rd; \R^d)$ for all $p_0\ge m$. 
  Given  $\ell \in \{1, \ldots, d\}$ and $\theta>0$, we consider the equation
	\begin{equs}\label{eqn.pde}
	\partial_t u^{(\ell)}+\frac{1}{2}a^{(i,j)}\partial^2_{i,j} u^{(\ell)}+b\cdot\nabla u^{(\ell)}= \theta u^{(\ell)}-b^{(\ell)},\quad u^{(\ell)}(1,\cdot)=0.
	\end{equs}
 Under Assumptions \ref{asn:sigma} and \ref{asn:Sobolev},
 by \cite[Theorem 10, p.123]{Krylov_PDE} there exists a unique  $v^{(\ell)}\in W^{1,2}_{p_0}((0, 1) \times \R^d)$, where the latter denotes the space of all measurable,  locally integrable functions  $v :[0, 1] \times \R^d \to \R$ such that 
 \begin{equs}
     \| v\|^{p_0} _{W^{1,2}_{p_0}((0, 1) \times \R^d)}:= \int_0^1 \int_{\R^d} | v|^{p_0}+|\nabla v|^{p_0}+| \nabla^2 v |^{p_0}+ | \D_tv|^{p_0} \, dx dt < \infty.
 \end{equs}

 Moreover, by  \cite[Theorem 1, p.120]{Krylov_PDE}  there exist positive  constants $N$, $\theta_0$ depending only in $d, p_0, \lambda, \| \sigma\|_{C^2}$ and $
 \|b\|_{C^0}$ such that, the estimate 
 \begin{equs}            \label{eq:W^2_p}
     \| u^{(\ell)}\|_{W^{1,2}_{p_0}((0, 1) \times \R^d)} \leq N \| b^{(\ell)} \|_{L_{p_0}((0, 1) \times \R^d)}
     \end{equs}
     holds. Notice that the solution does not depend on the choice of $p_0$ and the above estimates are valid for all $p_0 \geq m$ for the same solution. Moreover,  for any $\theta$ sufficiently large, we have 
     \begin{equs}
  \|\nabla u^{(\ell)}\|_{C^0([0, 1] \times \R^d)} \leq N \theta^{-1/2}, \label{tmp.2709regu}
 \end{equs}
with $N=N(\lambda, d, \|\sigma\|_{C^2}, \|b\|_{C^0}, \|b\|_{L_m})$ (see, for example \cite[Lemma 2.10]{DGL}).  It is also known (see, \cite[Lemma 10.2]{MR2117951}) that for each $\gamma \in (0, 1)$, if $p_0$ is large enough, then
 \begin{equs}                       \label{eq:Holder_grad_u}
     \|\nabla u ^{(\ell)} \|_{C^{\gamma/2, \gamma}([0, 1]\times \R^d)} \leq N \| b^{(\ell)} \|_{L_{p_0}(\R^d)}, 
 \end{equs}
 with $N= N(d, p_0, \lambda, \|\sigma\|_{C^2}, \| b\|_{C^0})$. 
 Finally, since in addition $b \in W^\alpha_m(\R^d ; \R^d )$, it also follows (see, \cref{lem:interpolation_PDE}) that  
\begin{equs}           \label{eq:u_in_LqLp}
    \| u^{(\ell)}\|_{L_{p_0}((0, 1); W^{2+\alpha}_m (\R^d))} \leq N \| b^{(\ell)}\|_{W^\alpha_m(\R^d)}, 
\end{equs}
with $N=N(\alpha, \lambda, d, \|\sigma\|_{C^2}, \|b\|_{C^0}, \|b\|_{L_m}, \theta)$, where
\begin{equs}
    \| u^{(\ell)}\|_{L_{p_0}((0, 1); W^{2+\alpha}_m (\R^d))}:= \| u^{(\ell)}\|_{L_{p_0}((0, 1); W^{2}_m (\R^d))}+ \| \nabla^2 u^{(\ell)}\|_{L_{p_0}((0, 1); W^\alpha_m (\R^d))}. 
\end{equs}
Further, let us set $u=(u^{(1)}, ..., u^{(d)})$ and let us fix a $\theta$
sufficiently large, so that the matrix $I- \nabla u (t, x)$ is invertible and both $I- \nabla u (t, x)$ and $(I- \nabla u (t, x))^{-1}$ are bounded uniformly in $(t, x)$, which can be done because of \eqref{tmp.2709regu}. 

With this setting, we consider \eqref{sde}  coupled with 
\begin{equs}
    V^{(\ell)}_t-
    \D_{j} u^{(\ell)}(t, X_t) V^{(j)}_t  &= \theta  \int_0^t \, \D_j u^{(\ell)}(r, X_r) V^{(j)}_r \, dr 
\\
  &\quad+ 
  \int_0^t  \big( \D_j  (\D_\rho u^{(\ell)} \sigma^{(\rho, i)}) (r, X_r) V^{(j)}_r + \D_j \sigma^{(\ell, i)}(X_r) V^{(j)}_r \big)  \, dB^{(i)}_r 
    \\
    &\quad+ \frac{1}{\sqrt{2}} 
    \big( ( I^{(\ell, \rho)}+ \D_\rho u^{(\ell)} ) \sigma^{(j, k)}\D_j \sigma^{(\rho, i)} \big)(t, X_t) \, dW^{(k, i)} 
    \label{eq:V_Sbolev_indices}
\end{equs}
or, in matrix notation, 
	\begin{equs}    \label{eq:V_Sbolev_simplified}
		(I-\nabla u(t,X_t))V_t&=  \theta\int_0^t\nabla u(r,X_r)V_rdr
		+\int_0^t[\nabla \big( (\nabla u + I ) \sigma \big) ](r,X_r)V_rdB_r
		\nonumber\\&\quad+\frac1{\sqrt2}\int_0^t(I+\nabla u)\sigma \nabla \sigma  (r,X_r) dW_r,  
	\end{equs}
 where the precise meaning of multiplication of tensors/matrices is understood by \eqref{eq:V_Sbolev_indices}. 

\begin{definition}                 \label{def:solution_of_system_Sobolev}
A solution of the system  \eqref{sde},\eqref{eq:V_Sbolev_indices} is a collection $\{ (\Omega, \cF,  \mathbb{F}, \bP),  X, V, B, W\}$ such that 
\begin{enumerate}[(i)]
\item $\{(\Omega, \cF, \mathbb{F}, \bP), X,  B\}$ is a solution of \eqref{sde}. \label{item:def_sol_1_Sobolev}

\item  $W$ is an $\R^{d\times d}$-valued $\mathbb{F}$-Wiener process on $\Omega$,  independent of $(X,B)$. 
\label{item:def_sol_2_Sobolev}

\item $V$ is a continuous  $\mathbb{F}$-adapted process.
\label{item:def_sol_3_Sobolev}

 \item  \label{item:def_sol_4_Sobolev} Equation \eqref{eq:V_Sbolev_indices} is satisfied $\bP$-almost surely, for all $t \in [0, 1]$. 
\end{enumerate}
\end{definition}

\begin{theorem}        \label{thm:existence_uniqueness_Sobolev}   
There exists a solution  $\{ (\Omega, \cF,  \mathbb{F}, \bP),  X, V, B, W\}$ of the system \eqref{sde},\eqref{eq:V_Sbolev_indices} .  Moreover, if $\{ (\Omega', \cF',  \mathbb{F}', \bP'),  X', V', B', W'\}$ is another  solution, then the laws of $(X, V, B, W)$ and  $(X', V', B', W')$ on $\mathcal{C}$ coincide. 
\end{theorem}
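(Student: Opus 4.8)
\textbf{Proof proposal for \cref{thm:existence_uniqueness_Sobolev}.}

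The plan is to obtain both existence and uniqueness by passing through the Zvonkin-transformed variable $Y_t := (I - \nabla u(t, X_t)) V_t$, for which \eqref{eq:V_Sbolev_simplified} is (formally) an It\^o SDE with coefficients that are \emph{genuine functions} rather than distributions, so that classical theory applies. First I would make sense of the right-hand side of \eqref{eq:V_Sbolev_indices} rigorously: the terms $\nabla \sigma(X_r)$ and $\nabla\big((\nabla u + I)\sigma\big)(r, X_r)$ involve only $\nabla u$, $\nabla^2 u$ and $\nabla\sigma$, all of which are honest $L_{p_0}$-in-time, $W^\alpha_m$-in-space (hence in particular locally bounded after the embeddings implied by \eqref{eq:Holder_grad_u}, \eqref{eq:u_in_LqLp}) functions, and I would invoke the stochastic Fubini / Krylov-type estimates (as encapsulated in results cited earlier in the paper) to check that, along the solution $X$ of \eqref{sde}, the processes $r \mapsto \nabla^2 u(r, X_r)$ etc.\ are well-defined and the dr-- and dB--integrals make sense in $L_2(\Omega)$. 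This reduces the problem to: show that the linear It\^o system for $Y$, namely
\begin{equs}
  dY_t = \theta \big(\nabla u(t, X_t)(I-\nabla u(t,X_t))^{-1} Y_t\big)\,dt + \big[\nabla((\nabla u + I)\sigma)\big](t,X_t)(I-\nabla u(t,X_t))^{-1} Y_t\, dB_t + \tfrac{1}{\sqrt 2}(I+\nabla u)\sigma\nabla\sigma(t,X_t)\,dW_t,
\end{equs}
with $Y_0 = (I - \nabla u(0,x_0)) V_0 = 0$, has a unique strong solution on the extended probability space carrying $(X, B, W)$, and then set $V_t := (I - \nabla u(t, X_t))^{-1} Y_t$, which is continuous and $\mathbb{F}$-adapted because $I - \nabla u$ and its inverse are bounded and, by \eqref{eq:Holder_grad_u}, jointly continuous in $(t,x)$.

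For existence: on an extension of $(\Omega, \cF, \mathbb{F}, \bP)$ supporting the independent $d\times d$ Wiener process $W$, the coefficients of the $Y$-equation are random but \emph{adapted and linear in $Y$}, with coefficient processes that are progressively measurable and in suitable $L_p(\Omega\times[0,1])$ spaces (using the global bounds on $\nabla u$, $(I-\nabla u)^{-1}$, $\nabla\sigma$, and the Krylov estimate $\|\nabla^2 u(\cdot, X_\cdot)\|_{L_{p_0}(\Omega\times[0,1])} \lesssim \|\nabla^2 u\|_{L_{p_0}([0,1]\times\R^d)}$ which is finite by \eqref{eq:W^2_p}). A standard Picard iteration in $L_2(\Omega; C([0,1]))$, or an appeal to the general existence theorem for linear SDEs with random integrable coefficients (e.g.\ Krylov's $L_p$-theory of It\^o equations), produces a solution $Y$, and hence a solution $\{(\Omega,\cF,\mathbb{F},\bP), X, V, B, W\}$ in the sense of \cref{def:solution_of_system_Sobolev}; one has to double-check that the algebraic manipulation turning \eqref{eq:V_Sbolev_indices} into the $Y$-equation is reversible, i.e.\ that if $Y$ solves the latter then $V = (I-\nabla u(t,X_t))^{-1}Y$ solves the former — this is just It\^o's formula applied to $(I - \nabla u(t, X_t))^{-1}$, which is legitimate since $u \in W^{1,2}_{p_0}$ with $p_0$ large and $X$ solves a non-degenerate SDE, so the It\^o--Krylov formula applies to $\phi(t, X_t)$ for $\phi \in W^{1,2}_{p_0}$.

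For uniqueness: given two solutions $\{(\Omega,\cF,\mathbb{F},\bP), X, V, B, W\}$ and $\{(\Omega',\cF',\mathbb{F}',\bP'), X', V', B', W'\}$, I would first note that by \cite{Veret80} / weak uniqueness for \eqref{sde} the laws of $(X, B)$ and $(X', B')$ agree, and since $W$ (resp.\ $W'$) is independent of $(X,B)$ (resp.\ $(X', B')$) the laws of the triples $(X, B, W)$ and $(X', B', W')$ agree; then define $Y, Y'$ by the transform and observe each solves the \emph{same} linear It\^o system driven by $(B, W)$ with coefficients that are deterministic functions of $(t, X_t)$. Pathwise uniqueness for this linear system — proved by Gr\"onwall after an $L_2$ estimate on $Y - Y'$, using boundedness of the drift coefficient and the $L_{p_0}(\Omega\times[0,1])$-integrability of the diffusion coefficient together with the Burkholder--Davis--Gundy and H\"older inequalities, exactly as in the standard proof that linear SDEs with integrable-enough coefficients have unique solutions — upgrades, via Yamada--Watanabe, weak uniqueness of $(X, Y, B, W)$, hence of $(X, V, B, W)$ since $V$ is a measurable (continuous) functional of $(t, X, Y)$. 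The main obstacle I anticipate is the careful justification that the It\^o--Krylov formula may be applied to $(t,x)\mapsto (I-\nabla u(t,x))^{-1}$ composed with $X$ — this needs $\nabla u$ to have enough Sobolev regularity in space and time (which is where \eqref{eq:u_in_LqLp} and a large choice of $p_0$, together with $m \geq \max\{2,d\}$, enter) and the non-degeneracy of $\sigma$ from \cref{asn:sigma}, so that the occupation-time/Krylov estimates control $\int_0^t |\nabla^2 u(r, X_r)|^2\,dr$ and the second-order terms produced by the chain rule are integrable; everything else is a routine Gr\"onwall-and-BDG argument on a linear equation.
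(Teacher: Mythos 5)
Your uniqueness argument coincides with the paper's: there too one passes to $Z_t=(I-\nabla u(t,X_t))V_t$, which solves the linear system \eqref{eq:Z}, invokes strong uniqueness for \eqref{sde},\eqref{eq:Z} and Yamada--Watanabe, and transfers uniqueness in law back to $V$ through the measurable bijection $(x,y)\mapsto(x,(I-\nabla u(t,x))^{-1}y)$. Where you genuinely diverge is existence: the paper does not solve the transformed equation at all, but obtains a solution as a weak limit point of $(X,V^n,B,W^n)$, using the tightness from \cref{prop.tight.sobolev}, Skorokhod representation, and a term-by-term passage to the limit in the decomposition \eqref{eqn.zkinn}; this proves the existence half of the theorem and \cref{thm:main_theorem_Sobolev} in one stroke. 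Your route --- extend the space carrying a weak solution $(X,B)$ of \eqref{sde} by an independent $W$, solve the linear equation for $Y$, and set $V=(I-\nabla u(\cdot,X_\cdot))^{-1}Y$ --- is legitimate and more self-contained for the present statement, at the price of contributing nothing towards the convergence theorem, which the paper's compactness argument delivers for free.

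Two technical points in your sketch deserve correction. First, no It\^o--Krylov formula is needed to pass between \eqref{eq:V_Sbolev_indices} and the $Y$-equation: the left-hand side of \eqref{eq:V_Sbolev_simplified} is literally $(I-\nabla u(t,X_t))V_t$ and $V$ enters the right-hand side only inside the integrands, so the equivalence is the pointwise algebraic substitution $V_r=(I-\nabla u(r,X_r))^{-1}Y_r$, using only the uniform invertibility guaranteed by \eqref{tmp.2709regu}; worrying about a chain rule for $(I-\nabla u)^{-1}$ composed with $X$ is an overcomplication (and avoiding it is precisely why the limit equation is formulated in this transformed shape). Second, ``standard Picard iteration in $L_2(\Omega;C([0,1]))$'' does not apply as stated: the coefficient multiplying $Y$ in the $dB$-integral contains $\nabla^2u(r,X_r)$, which is unbounded, so the usual Gronwall step fails. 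You need either the fundamental-solution/variation-of-constants representation (using that $B$ and $W$ are independent), a localization along stopping times controlling $\int_0^t|\nabla^2u(r,X_r)|^2\,dr$, or a stochastic Gronwall argument, with the Krylov-type bound of \cref{lem:Krylov_est_Xn} and \eqref{eq:W^2_p} (indeed exponential moments, as used in the proof of \cref{prop.weightedmoment}) supplying the required integrability. This is fixable and standard, and the paper is equally terse on the corresponding claim of strong uniqueness for \eqref{eq:Z}, but as written your existence and uniqueness steps for the linear equation have this gap.
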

 The law of the unique solution $(X, V, B, W)$ of the system \eqref{sde},\eqref{eq:V_Sbolev_indices} will be denoted by $\tilde{\mu}$. 
\begin{theorem}
\label{thm:main_theorem_Sobolev}
Let Assumptions \ref{asn:sigma} and \ref{asn:Sobolev} hold. With the notation of \cref{thm:main_Holder} we have that the laws $(X, V^n, B, W^n)$ converge weakly to $\tilde{\mu}$.
    
\end{theorem}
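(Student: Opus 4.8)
The plan is to transfer the central-limit statement from the H\"older regime, \cref{thm:main_Holder}, to the Sobolev regime via the Zvonkin change of variables that is already built into the formulation. First I would note that under \cref{asn:Sobolev} (after the harmless reduction to $m\ge\max\{2,d\}$) one has $b\in W^\alpha_m\hookrightarrow C^{\alpha'+}$ for some $\alpha'>0$ when $m>d/\alpha$, but this Sobolev embedding route is wasteful and does not give the sharp statement; instead the point of this section is that the \emph{transformed} quantity is governed by a genuine It\^o SDE with no distributional coefficients, so one should argue directly. Fix $\theta$ large as in the text, let $u$ solve \eqref{eqn.pde}, and set $\Phi_t(x)=x-u(t,x)$, so that $\Phi$ is, for each $t$, a bi-Lipschitz homeomorphism of $\R^d$ with $\nabla\Phi=I-\nabla u$ invertible and two-sided bounded. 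The key algebraic fact is that $Y_t:=\Phi_t(X_t)$ satisfies an SDE $dY_t=\tilde b(t,Y_t)\,dt+\tilde\sigma(t,Y_t)\,dB_t$ with coefficients that are (spatially) $C^1$ in a suitable sense — this is the classical Zvonkin/It\^o--Krylov computation using \eqref{eq:W^2_p}, \eqref{tmp.2709regu}, \eqref{eq:Holder_grad_u}, and the generalized It\^o formula for $W^{1,2}_{p_0}$ functions composed with It\^o processes. Likewise $Y^n_t:=\Phi_{\kappa_n(t)}(X^n_{\kappa_n(t)})$, or more precisely a continuous interpolant thereof, should be shown to be an Euler-type scheme for the transformed SDE up to a remainder that is $o(n^{-1/2})$ in probability.

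Next I would set up the main decomposition. Write $\tilde V^n_t:=\sqrt n\,(Y_t-\tilde Y^n_t)$ for the appropriate transformed scheme $\tilde Y^n$. The idea is to establish: (a) $\tilde Y^n$ is an Euler--Maruyama scheme for the transformed SDE with $C^1$ (indeed better) coefficients up to negligible error, so that \cite{MR1119837,Jacod_Protter}, i.e.\ the classical result \eqref{eq:V}, applies and gives weak convergence of $(Y,\tilde V^n,B,W^n)$ to the law of $(Y,\tilde V,B,W)$ solving the differentiable-coefficient limit system; (b) the original fluctuation $V^n_t=\sqrt n(X_t-X^n_t)$ is recovered from $\tilde V^n$ by the (random, time-dependent) linear map coming from $D\Phi^{-1}$, with a Taylor remainder. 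Concretely, since $X_t=\Phi_t^{-1}(Y_t)$ and $X^n_{\kappa_n(t)}=\Phi_{\kappa_n(t)}^{-1}(\tilde Y^n_{\kappa_n(t)})$, a first-order expansion gives $V^n_t=(I-\nabla u(t,X_t))^{-1}\tilde V^n_t+R^n_t$, where $R^n_t$ collects the second-order term in the expansion of $\Phi^{-1}$ (controlled by $\nabla^2 u$, hence by \eqref{eq:u_in_LqLp}) and the time-increment term $\Phi_t-\Phi_{\kappa_n(t)}$ acting on $Y$. One then checks that $\sup_t|R^n_t|\to0$ in probability using the uniform rate of convergence $\E\sup_t|X_t-X^n_t|^p\lesssim n^{-p/2}$ (available by the $L_p$-estimates for irregular-drift Euler schemes cited earlier, e.g.\ \cite{DGL}) together with the regularity of $u$. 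Combining (a), (b) via the continuous mapping theorem and Slutsky, the law of $(X,V^n,B,W^n)$ converges weakly to the pushforward of $\mathrm{Law}(Y,\tilde V,B,W)$ under $(y,\tilde v,\beta,w)\mapsto(\Phi^{-1}(y),(I-\nabla u)^{-1}\tilde v,\beta,w)$; finally, by \cref{thm:existence_uniqueness_Sobolev} and the definition of $\tilde\mu$, this pushforward is exactly $\tilde\mu$, since the limiting system \eqref{eq:V_Sbolev_indices} is by construction the transform of the classical system \eqref{eq:V} written for the transformed coefficients — this last identification is essentially an It\^o-formula bookkeeping check that $(I-\nabla u(t,X_t))V_t$ with $V$ defined from $\tilde V$ satisfies \eqref{eq:V_Sbolev_simplified}.

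A cleaner alternative, which I would actually prefer to carry out, avoids introducing a separate transformed scheme: keep $X^n$ as is, and directly analyze $Z^n_t:=(I-\nabla u(t,X_t))V^n_t$. Apply the It\^o--Krylov formula to $u(t,X_t)$ and to $u(t,X^n_t)$ (the latter needs care because $X^n$ has the ``frozen'' drift $b(X^n_{\kappa_n(t)})$, producing extra terms of the schematic form $\sqrt n\int_0^t[\nabla u(r,X^n_r)-\nabla u(\kappa_n(r),X^n_{\kappa_n(r)})]\cdot\big(b(X^n_{\kappa_n(r)})dr+\sigma(X^n_{\kappa_n(r)})dB_r\big)$), subtract, multiply by $\sqrt n$, and identify the limit of each resulting term: the drift terms involving $b$ itself telescope into the $\theta\int_0^t\nabla u(r,X_r)V_r\,dr$ term plus the martingale correction, the diffusion terms reorganize into $\int_0^t\nabla((\nabla u+I)\sigma)(r,X_r)V_r\,dB_r$, and the ``diagonal'' quadratic-variation terms of the frozen scheme converge, exactly as in Jacod--Kurtz--Protter, to $\frac1{\sqrt2}\int_0^t(I+\nabla u)\sigma\nabla\sigma\,(r,X_r)\,dW_r$ with the auxiliary Brownian motion $W^n\Rightarrow W$. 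The hard part will be this last step: making rigorous the passage to the limit in the terms where two non-smooth objects meet — the increments of $\nabla u$ (only in $W^{1,2}_{p_0}$, i.e.\ $\nabla^2u\in L_{p_0}$ and H\"older in time–space of low order by \eqref{eq:Holder_grad_u}) against the diagonal Brownian-increment martingale of the Euler scheme — with uniform-in-$n$ $L_p$ tightness and identification of the limit; this requires careful stochastic-sewing or It\^o-Krylov estimates of the type developed in \cite{DGL} to bound terms like $\sqrt n\int_0^t\big(\nabla u(r,X^n_r)-\nabla u(\kappa_n(r),X^n_{\kappa_n(r)})\big)\,dr$ in $L_p(\Omega)$ by $o(1)$, uniformly in $n$, and to show the relevant tightness in $C([0,1])$. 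Once tightness and identification of all limit terms are in hand, uniqueness of the limiting system (\cref{thm:existence_uniqueness_Sobolev}) upgrades subsequential weak convergence to full weak convergence to $\tilde\mu$, completing the proof.
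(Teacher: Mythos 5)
Your preferred route (the ``cleaner alternative'') is essentially the paper's proof: the paper applies It\^o's formula to $u(t,X_t)$ and $u(t,X^n_t)$ to get the Zvonkin-type decomposition $V^n=\sqrt n\sum_{i=0}^6 J^{i,n}$, proves uniform-in-$n$ $L_p$/H\"older bounds (tightness) via the quadrature estimates of \cref{lem:quad1} and \cref{lem:(ii)} together with Krylov-type and maximal-function estimates and a stochastic Gronwall argument, passes along a Skorokhod representation to identify the limit of each term (the delicate one being $J^{6,n}$, where $\nabla u$-increments meet $dB$; this is handled by smooth approximations $u_m$ of $u$, the bound \eqref{eq:Lip_Max} with the maximal function, Krylov estimates, and \cref{lem:convergence_stochastic_integrals}), and then upgrades subsequential to full weak convergence using the uniqueness of \cref{thm:existence_uniqueness_Sobolev} — exactly the steps you outline. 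One caution about your first sketched route: you cannot simply invoke the classical Jacod--Kurtz--Protter theorem for the transformed scheme, because the transformed coefficients are not $C^1$ with bounded derivative — their spatial gradient involves $\nabla^2 u$, which under \cref{asn:Sobolev} is only in $L_{p_0}$ (plus low-order H\"older control on $\nabla u$ from \eqref{eq:Holder_grad_u}) — and showing that $\Phi_{\kappa_n(\cdot)}(X^n_{\kappa_n(\cdot)})$ is an Euler scheme for the transformed SDE up to an $o(n^{-1/2})$ remainder requires precisely the quadrature/stochastic-sewing estimates that the direct approach uses, so that detour buys nothing; your decision to carry out the direct analysis of the transformed fluctuation is the right one and matches the paper.
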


\section{Quadrature Estimates}\label{sec.quadrature}

In this section we prove some quadrature-type estimates which will be used for the proofs of our main results. More precisely, for  functions $g=g(t,x)$ and $f=f(x)$, and $(s,t)\in[0,1]_{\leq}^2:= \{ (s, t) \in [0, 1]^2; s \leq t \}$, we estimate moments of the following integral
\begin{align*}
    \int_s^t g (r, X^n_r) & \big(  f(X^n_r)-f(X^n_{\kappa_n(r)})\big)\, dr.
\end{align*}
In the following two results, when one of $g,f$ has positive Sobolev regularity and the other one has positive H\"older regularity, 
we show that  moments of all positive orders of the above integral is of order $n^{-(1+\eps)/2}|t-s|^{1/2}$ for some $\eps>0$.

\begin{lemma}\label{lem:quad1}
Let $b \in C^0(\R^d; \R^d)$, $\sigma$ satisfy \cref{asn:sigma},  and let  $p\geq 2$,  $\theta \in (0, 1)$, $\rho \in [1, \infty]$. Suppose that $X$, $X^n$ satisfy \eqref{sde} and \eqref{EM}, respectively.  Then, there exist $q_0 \geq 1$, $\beta_0 \in (0, 1)$, and $\eps>0$ such that  for all $q \geq q_0, \beta  \geq \beta_0$,  $g \in L_q( (0, 1) ; C^\beta (\R^d))$,  $f \in C^0(\R^d)  \cap W^\theta_\rho(\R^d)$,   $(s,t ) \in [0,1]_{\leq}^2$,
and   $n\in\N$,  one has the bound 
\begin{equs}
 \big\|\int_s^t g (r, X^n_r) & \big(  f(X^n_r)-f(X^n_{\kappa_n(r)})\big)\, dr\big\|_{L_p(\Omega)}
 \\
 & \leq   N \|g\|_{L^q((0, 1);C^\beta(\R^d))}  \big(\|f\|_{W^{\theta}_\rho(\R^d)}+\|f\|_{C^0(\R^d)}\big) n^{-(1+\eps)/2}|t-s|^{1/2},
\label{eq:sup-quadr-add}
\end{equs} 
where 
 $N$ is a constant depending only on $\theta, \rho, p, \|b\|_{C^0}, \|\sigma\|_{C^2}, d$, and $\lambda$. 
\end{lemma}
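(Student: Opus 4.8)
The plan is to use the stochastic sewing lemma applied to the germ built from the conditional expectation of the increment, exactly in the spirit of the quadrature bounds in the Euler-scheme literature. First I would reduce to the case $\rho,\theta$ fixed and absorb the $C^0$ norm of $f$ into the Sobolev norm via a standard mollification/truncation, so that effectively $f\in W^\theta_\rho\cap C^0$ with a single norm on the right. Set $A_{s,t}=\int_s^t g(r,X^n_r)\bigl(f(X^n_r)-f(X^n_{\kappa_n(r)})\bigr)\,dr$, which is genuinely additive, and define the germ $\mathcal{A}_{s,t}=\E^{\cF_s}\bigl[\int_s^t g(r,X^n_{\kappa_n(s)})\bigl(f(X^n_r)-f(X^n_{\kappa_n(r)})\bigr)\,dr\bigr]$ — or a slight variant freezing $g$ at time $\kappa_n(s)$ — so that $\delta A_{s,u,t}-\delta\mathcal{A}_{s,u,t}$ is controlled. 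The point of the stochastic sewing machinery is that one needs two bounds: a second-moment bound on $\E^{\cF_s}\mathcal{A}_{s,t}$ of order $n^{-(1+\eps)/2}|t-s|^{1/2+}$, and an $L_p$ bound on $\delta\mathcal{A}_{s,u,t}$ with exponent strictly greater than $1$, both of which then yield the claimed $L_p$ estimate on $A_{s,t}$.

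The heart of the estimate is the one-step Gaussian bound: for $r$ in a single mesh interval $[\kappa_n(r),\kappa_n(r)+1/n)$, conditionally on $\cF_{\kappa_n(r)}$ the increment $X^n_r-X^n_{\kappa_n(r)}$ is (to leading order) a Gaussian of variance $\sim (r-\kappa_n(r))\,\sigma\sigma^*(X^n_{\kappa_n(r)})$, which by \cref{asn:sigma} is nondegenerate with lower bound $\lambda^2$. Hence heat-kernel smoothing gives, for a test function $\phi$, that $\E^{\cF_{\kappa_n(r)}}\bigl[\phi(X^n_r)-\phi(X^n_{\kappa_n(r)})\bigr]$ is like $(\cP_{c(r-\kappa_n(r))}\phi-\phi)(X^n_{\kappa_n(r)})$ up to lower-order drift corrections. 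Applying this with $\phi=f$ one gains a factor from the definition of the negative-Hölder/Besov norm: since $f\in W^\theta_\rho$ embeds into $C^{\theta-d/\rho}$ or, more robustly, since $\|\cP_t f-f\|_{C^0}\lesssim t^{\theta'/2}\|f\|_{W^\theta_\rho}$ for a suitable $\theta'>0$ (via the $B^\theta_{\rho,\infty}\hookrightarrow B^{\theta'}_{\infty,\infty}$-type embedding when $\theta-d/\rho>0$, or otherwise an $L_\infty$-in-space bound after using that $X^n_{\kappa_n(r)}$ has a density), one gets the pointwise-in-$\omega$ bound $|\E^{\cF_{\kappa_n(r)}}(f(X^n_r)-f(X^n_{\kappa_n(r)}))|\lesssim n^{-\theta'/2}\|f\|_{W^\theta_\rho}$. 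Integrating the frozen-$g$ germ over $[s,t]$ and using $|g(r,\cdot)|\le\|g(r,\cdot)\|_{C^0}\le\|g(r,\cdot)\|_{C^\beta}$ together with Hölder in $r$ against $L_q((0,1))$ produces $n^{-(1+\eps)/2}|t-s|^{1-1/q}$, and taking $q_0$ large makes $1-1/q>1/2$; the exponent $\eps$ comes from $\theta'$ minus the loss incurred by only having the crude frozen-$g$ estimate, so one needs $q_0$ and $\beta_0$ chosen so the surplus is strictly positive.

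For the consistency term $\delta\mathcal{A}_{s,u,t}$ and the difference $A_{s,t}-\mathcal{A}_{s,t}$, the discrepancies come from (a) replacing $g(r,X^n_r)$ by the frozen $g(r,X^n_{\kappa_n(s)})$, which costs the $\beta$-Hölder modulus of $g$ in space times the modulus of $X^n$, controllable because $X^n$ has uniformly bounded moments and $\tfrac14$-Hölder-type increments on scales above $1/n$, again using $g\in L_q((0,1);C^\beta)$ and Hölder in time; and (b) the standard telescoping/regrouping of mesh intervals straddling $s,u,t$. Each such term carries either an extra power of $|t-s|$ or an extra power of $n^{-1/2}$, giving the required exponent $>1$ for sewing while preserving the overall $n^{-(1+\eps)/2}$. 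The main obstacle I anticipate is item (b) combined with the interaction between the irregular drift $b$ and the frozen-coefficient Euler structure: one must show that the drift contribution $\int b(X^n_{\kappa_n(r)})dr$ to the one-step increment $X^n_r-X^n_{\kappa_n(r)}$, though only $C^0$, does not destroy the Gaussian smoothing estimate — this is handled by noting $b$ enters the increment only at $O(1/n)$ scale and is bounded, so it is a genuinely subleading perturbation of the Gaussian bridge, and one expands the conditional expectation to first order using Girsanov or a direct density estimate for the one-step Euler transition. Keeping track of the precise thresholds $q_0,\beta_0,\eps$ through the Besov embeddings is the bookkeeping cost, but the structure is dictated by stochastic sewing.
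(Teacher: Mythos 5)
There is a genuine gap, and it sits exactly at the heart of your argument: the claimed one-step bound cannot produce the rate $n^{-(1+\eps)/2}$. You condition at $\cF_{\kappa_n(r)}$ and invoke heat-kernel smoothing over a single mesh interval of length $\le 1/n$, obtaining $|\E^{\cF_{\kappa_n(r)}}(f(X^n_r)-f(X^n_{\kappa_n(r)}))|\lesssim n^{-\theta'/2}\|f\|_{W^\theta_\rho}$ with some $\theta'\le\theta<1$. Since $\theta'<1$, this is \emph{worse} than $n^{-1/2}$, and no amount of integrating in $r$ or H\"older against $\|g\|_{L_q((0,1);C^\beta)}$ can convert $n^{-\theta'/2}$ into $n^{-(1+\eps)/2}$; the step where you assert that integrating the frozen-$g$ germ "produces $n^{-(1+\eps)/2}|t-s|^{1-1/q}$" is simply not supported by the preceding estimate. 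The baseline gain of order $n^{-1/2}$ in this type of quadrature bound does not come from smoothing over one mesh interval at all: it requires conditioning at a time $s$ macroscopically earlier than $\kappa_n(r)$, so that the conditional law of $X^n_{\kappa_n(r)}$ given $\cF_s$ has a smooth density, which is what yields the estimate $|\E^s(f(\bar X^n_r)-f(\bar X^n_{\kappa_n(r)}))|\lesssim n^{-1/2+\gamma}\|f\|_{C^0}|r-s|^{-1/2+\gamma}$ (the estimate from the earlier work of the authors that the paper quotes), valid for merely bounded $f$.

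Even with that corrected germ, you only reach $n^{-1/2+\gamma}$, which is still short of the statement; the missing idea in your proposal is the interpolation that supplies the extra $\eps$. The paper first reduces to the driftless scheme by Girsanov, then runs stochastic sewing twice: once for $f\in C^0\cap L_\rho$ (using the bound above, rate $n^{-1/2+\gamma}$) and once for $f\in C^0\cap W^k_\rho\subset C^2$ with $k$ a large integer, where It\^o's formula on one mesh interval gives $|\E^s(f(\bar X^n_r)-f(\bar X^n_{\kappa_n(r)}))|\lesssim n^{-1}\|f\|_{C^2}$ and hence rate $n^{-1+\gamma}$. Interpolating between these two endpoint estimates in the Sobolev scale gives the rate $n^{\gamma-1/2-\theta/(2k)}$, and choosing $\gamma<\theta/(2k)$ yields the claimed $\eps>0$. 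Your proposal contains no analogue of this two-endpoint interpolation, and without it the fractional regularity $W^\theta_\rho$ of $f$ is never actually converted into a power of $n$ beyond $n^{-1/2}$; relatedly, your closing remark about handling the drift inside the one-step Gaussian expansion is unnecessary once one does the global Girsanov reduction, but that is a side issue compared with the quantitative failure of the germ bound.
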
 

\begin{proof}
Let us denote by $\bar{X}^n$ the Euler scheme corresponding to $b\equiv 0$. 
By Girsanov's transform, it suffices (see \cite[Corollary 3.2]{DGL}) to prove the claim for    $\bar{X}^n$ . Estimate  \eqref{eq:sup-quadr-add} is obtained by interpolation between the cases $ f \in C^0(\R^d)  \cap L_\rho(\R^d)$ and $f \in C^0(\R^d)  \cap W^k_\rho(\R^d)$ for some large integer $k$. The estimate corresponding to each class is obtained through the stochastic sewing lemma.

Firstly, let us fix a $k  \in \mathbb{N} $ such that  $W^k_\rho(\R^d)  \subset C^2(\R^d)$ and let us chose  a $\gamma  \in (0, 1/2)$ such that $\gamma< \theta/(2k)$. 

We set 
\begin{equs}
A_{s, t} = \E^s  \int_s^tg(r,\bar{X}^n_s)   \big(  f(\bar{X}^n_r)-f(\bar X^n_{\kappa_n(r)})\big)\, dr,
\\
\mathcal{A}_t = \int_0^t g(r,\bar{X}^n_r)   \big(  f(\bar{X}^n_r)-f(\bar X^n_{\kappa_n(r)})\big)\, dr. 
\end{equs}
It has been shown in \cite{DGL} (see the proof of Lemma 3.1 therein) that for all $r>s$ we have 
\begin{equs}     \label{eq:est-pre-paper}
 |\E^s\big(  f(\bar{X}^n_r)-f(\bar{X}^n_{\kappa_n(r)})\big)| \lesssim n^{-1/2+\gamma}\| f\|_{C^0(\R^d)} |r-s|^{-1/2+\gamma}.
\end{equs}
Using  the regularity of $g$ we see that 
\begin{equs}
\|A_{s, t}\|_{L_p(\Omega) } & \lesssim n^{-1/2+\gamma}\| f\|_{C^0(\R^d)} \int_s^t \|g(r, \cdot)\|_{C^\beta(\R^d)}    |r-s|^{-1/2+\gamma} \, dr. 
\\
& \lesssim n^{-1/2+\gamma} \|g\|_{L^q((0, 1);C^\beta(\R^d))} \|f\|_{C^0(\R^d)}  |t-s|^{(1+\gamma)/2}, 
\end{equs}
provided that  $q$  is sufficiently large.  Moreover, we have 
\begin{equs}
\E^s \delta A_{s, u, t} =  \E^s  \int_u^t ( g(r,\bar{X}^n_s)-g(r,\bar{X}^n_u) )   \E^u  \big(  f(\bar{X}^n_r)-f(\bar X^n_{\kappa_n(r)})\big)\, dr.
\end{equs}
Using again \eqref{eq:est-pre-paper} combined with the fact that $g \in L_q( (0, 1); C^\beta(\R^d))$ and that $\bar X^n_s -\bar X^n_u $ is of order $| s-u|^{1/2}$ in $L_p(\Omega)$ uniformly in $n$, it gives 
\begin{equs}
\| \E^s \delta A_{s, u, t}\|_{L_p(\Omega)}  & \lesssim \| f\|_{C^0}    n^{-1/2+\gamma} |u-s|^{\beta/2} \int_u^t  \|g(r, \cdot)\|_{C^\beta} |r-u|^{-1/2+\gamma} \, dr  
\\
& \lesssim \| f\|_{C^0(\R^d)}  \|g\|_{L^q((0, 1);C^\beta(\R^d))} n^{-1/2+\gamma} |u-s|^{\beta/2} |t-u|^{-1/2+\gamma+1/q^*}
\\
& \lesssim\| f\|_{C^0(\R^d)}  \|g\|_{L^q((0, 1);C^\beta(\R^d))} n^{-1/2+\gamma}  |t-s|^{-1/2+\gamma+1/q^*+\beta/2},    \label{eq:delta-1}
\end{equs}
where $q^*=q/(q-1)$.  Now we fix $\beta_0 \in (0, 1) $ sufficiently close to $1$ and $q_0$ sufficiently large so that
\begin{equs}    \label{eq:choice_of_gamma}
-1/2+\gamma+1/q_0^*+{\beta_0}/2 >1, 
\end{equs} 
and notice that the same relation holds for all $\beta \geq \beta_0$ and $q \geq q_0$. 
Moreover, by using the boundedness of $f$ and the regularity of $g$, it is easy to see that 
\begin{equs}
\| \mathcal{A}_t &-\mathcal{A}_s- A_{s, t}\|_{L_p(\Omega)}
\\
 & =  \big\| \int_s^tg(r,\bar{X}^n_r)   \big(  f(\bar{X}^n_r)-f(\bar X^n_{\kappa_n(r)})\big)\, dr - \E^s  \int_s^tg(r,\bar{X}^n_s)   \big(  f(\bar{X}^n_r)-f(\bar X^n_{\kappa_n(r)})\big)\, dr \big\|_{L_p(\Omega)}
\\
& \leq  4 \| f\|_{C^0} \int_s^t \|g(r, \cdot)\|_{C^0} \, dr 
\\
&\leq 4 \| f\|_{C^0(\R^d)} \|g\|_{L_q( (0, 1); C^\beta)} |t-s|^{1/q^*},
\end{equs}
and notice that $1/q^*>1/2$ by \eqref{eq:choice_of_gamma}.  Moreover, by the regularity of $g$, we get 
\begin{equs}
\| \E^s \big( \mathcal{A}_t &-\mathcal{A}_s- A_{s, t} \big) \|_{L_p(\Omega}
\\
    & \big\| \int_s^t\big( g(r,\bar{X}^n_r) - g(r,\bar{X}^n_s) \big) \E^s \big(  f(\bar{X}^n_r)-f(\bar X^n_{\kappa_n(r)})\big)\, dr \big\|_{L_p(\Omega)}
    \\
   &  \leq 2 \| f\|_{C^0(\R^d)} \int_s^t \| g(r, \cdot )\|_{C^\beta(\R^d)} |r-s|^{\beta/2} \, dr 
   \\
    &  \leq 2 \| f\|_{C^0(\R^d)} \|g\|_{L_q( (0, 1); C^\beta(\R^d))} |t-s|^{1/q^*+\beta/2},
\end{equs}
and notice that $1/q^*+\beta/2>1$ from \eqref{eq:choice_of_gamma}. 
Consequently, 
the stochastic sewing lemma  gives
\begin{equs}
& \big\|   \int_s^tg(r,\bar{X}^n_r)   \big(  f(\bar{X}^n_r)-f(\bar{X}^n_{\kappa_n(r)})\big)\, dr \|_{L_p(\Omega)} =  \| \mathcal{A}_t -\mathcal{A}_s\|_{L_p(\Omega)}
\\
   &\lesssim \| f\|_{C^0(\R^d)}   \|g\|_{L^q((0, 1);C^\beta(\R^d))} n^{-1/2+\gamma}  |t-s|^{1/2}
\\
 &\lesssim\|g\|_{L^q((0, 1);C^\beta(\R^d))} (  \| f\|_{C^0(\R^d)} + \| f\|_{L_\rho(\R^d)})    n^{-1/2+\gamma}  |t-s|  ^{1/2}, \label{eq:to-be-interpolated}
\end{equs}
which gives the estimate in the case $f \in C^0(\R^d) \cap L_\rho(\R^d)$. 

We continue with obtaining an estimate in the case $f \in \in C^0(\R^d) \cap W_\rho^k(\R^d)$. 
For  $|r-s| \geq 4/n$, by writing It\^o's formula for $f(\bar{X}^n_r)$  is easy to see that 
\begin{equs}
 |\E^s\big(  f(\bar{X}^n_r)-f(\bar X^n_{\kappa_n(r)})\big)| \lesssim n^{-1}  \| f\|_{C^2(\R^d)}.
\end{equs}
On the other hand,  if $|r-s| \leq 4/n$, then we have the trivial estimate 
\begin{equs}         \label{eq:better-rate}
\|  f(\bar{X}^n_r)-f(\bar X^n_{\kappa_n(r)}) \|_{L_p(\Omega)} \leq N  \|f\|_{C^1(\R^d)} n^{-1/2} \lesssim \|f\|_{C^1(\R^d)} n^{-1+\gamma} |r-s|^{-1/2+\gamma}.
\end{equs}
Consequently, as before we get 
\begin{equs}
\|A_{s, t}\|_{L_p(\Omega) } &\lesssim  \|g\|_{L^q((0, 1);C^\beta(\R^d))}   \| f\|_{C^2(\R^d)} n^{-1+\gamma} |t-s|^{(1+\gamma)/2},
\end{equs}
for $q$ sufficiently large. 
Moreover,  similarly to the estimate \eqref{eq:delta-1}, this time using \eqref{eq:better-rate}  which is true for all $p$, and H\"older's inequality, we arrive at 
\begin{equs}
\| \E^s \delta A_{s, u, t}\|_{L_p(\Omega)}  &\lesssim \|g\|_{L^q((0, 1);C^\beta(\R^d))}  \| f\|_{C^2(\R^d)}  n^{-1+\gamma} |t-s|^{-1/2+\gamma+1/q^*+\beta/2},
\end{equs}
and as before, the stochastic sewing lemma guarantees that 
\begin{equs}
&  \qquad \big\|   \int_s^tg(r,\bar{X}^n_r)   \big(  f(\bar{X}^n_r)-f(\bar{X}^n_{\kappa_n(r)})\big)\, dr \|_{L_p(\Omega)}
\\
&  \lesssim   \|g\|_{L^q((0, 1);C^\beta(\R^d))}  \| f\|_{C^2(\R^d)}  n^{-1+\gamma}  |t-s|^{1/2}
\\
& \lesssim \|g\|_{L^q((0, 1);C^\beta(\R^d))}  (  \| f\|_{C^0(\R^d)} + \| f\|_{W^k_\rho(\R^d)})    n^{-1+\gamma}  |t-s|^{1/2},
\end{equs}
which gives us an estimate for the case $f \in C^0(\R^d) \cap W^k_\rho(\R^d)$. 
The above bound and  \eqref{eq:to-be-interpolated}, by means of interpolation show that for $f \in C^0(\R^d) \cap W^\theta_\rho(\R^d)$, we have 
\begin{equs}
& \big\|   \int_s^tg(r,\bar{X}^n_r)   \big(  f(\bar{X}^n_r)-f(\bar{X}^n_{\kappa_n(r)})\big)\, dr \|_{L_p(\Omega)} 
\\
 &\lesssim  \|g\|_{L^q((0, 1);C^\beta(\R^d))}  (  \| f\|_{C^0(\R^d)} + \| f\|_{W^\theta_\rho(\R^d)})    n^{(-1+\gamma)\theta/k + (1-\theta/k)(-1/2+\gamma)}  |t-s|^{1/2}.
 \\
 &=    \|g\|_{L^q((0, 1);C^\beta(\R^d))}  (  \| f\|_{C^0(\R^d)} + \| f\|_{W^\theta_\rho(\R^d)})    n^{\gamma -1/2-\theta/(2k)}  |t-s|^{1/2}.
\end{equs}
Finally notice that since $\gamma < \theta/(2k)$, we have that $\gamma -1/2-\theta/(2k)< -1/2$, which finishes the proof. 
\end{proof}

\begin{lemma}\label{lem:(ii)}
Let $b \in C^0(\R^d; \R^d)$, $\sigma$ satisfy \cref{asn:sigma}, $p\geq 2$, $m \geq 1$  and $f \in C^1(\R^d)$. Suppose that $X$, $X^n$ satisfy \eqref{sde} and \eqref{EM}, respectively.  There exists $q_0 \geq 0 $  and $\eps>0$ such that  for all $q \geq q_0$,   $g \in L_q((0, 1); W^\alpha_m(\R^d)  \cap L_q(\R^d) )$,   $(s,t ) \in [0,1]_{\leq}^2$,   and  $n\in\N$ one has the bound 
\begin{equs}
 \big\|\int_s^t g (r, X^n_r) \big(  f(X^n_r)-f(X^n_{\kappa_n(r)})\big)\, dr\big\|_{L_p(\Omega)}\leq   N  n^{-(1+\eps)/2}|t-s|^{1/2},
\end{equs} 
where 
 $N$ is a constant depending only on   $\|g\|_{L_q((0, 1);W^\alpha_m \cap L_q)}, \|f\|_{C^1},   \|b\|_{C^0}, \|\sigma\|_{C^2}, p, d$, and $\lambda$. 
\end{lemma}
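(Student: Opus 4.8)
The plan is to mirror the proof of \cref{lem:quad1}, with the roles of the regular and the rough factor interchanged: here $g$ carries the Sobolev regularity and $f$ the (Lipschitz) H\"older regularity. As there, by Girsanov's transform (\cite[Corollary 3.2]{DGL}) it suffices to prove the bound with $b\equiv0$ in \eqref{EM}; write $\bar X^n$ for the corresponding Euler scheme, $\delta^n_r f:=f(\bar X^n_r)-f(\bar X^n_{\kappa_n(r)})$, and $\mathcal A_t:=\int_0^t g(r,\bar X^n_r)\,\delta^n_r f\,dr$. The only facts about $f$ that will be needed are the trivial bound $\|\delta^n_r f\|_{L_p(\Omega)}\lesssim\|f\|_{C^1}n^{-1/2}$ and the conditional estimate
\begin{equs}\label{eq:C1-cond}
 |\E^s\delta^n_r f|\lesssim\|f\|_{C^1}\,n^{-1+\gamma}\,|r-s|^{-1/2+\gamma},\qquad r>s,
\end{equs}
valid for any small $\gamma>0$. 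Estimate \eqref{eq:C1-cond} is the $C^1$-analogue of \eqref{eq:est-pre-paper} and is obtained by interpolating the two bounds already used in the proof of \cref{lem:quad1}, namely $|\E^s\delta^n_r f|\lesssim n^{-1/2+\gamma}\|f\|_{C^0}|r-s|^{-1/2+\gamma}$ for bounded $f$ and $|\E^s\delta^n_r f|\lesssim n^{-1}\|f\|_{C^2}$ for $|r-s|\ge4/n$: writing $f=f_\eps+(f-f_\eps)$ with $\|f-f_\eps\|_{C^0}\lesssim\eps\|f\|_{C^1}$, $\|f_\eps\|_{C^2}\lesssim\eps^{-1}\|f\|_{C^1}$ and optimising in $\eps$ gives \eqref{eq:C1-cond}. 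The crucial point is that the exponent of $n$ in \eqref{eq:C1-cond} is strictly below $-1/2$; this is what makes the argument below produce a genuine gain.

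Next I would fix a scale $\lambda\in(0,1)$, to be chosen as a negative power of $n$, and split $g(r,\cdot)=g_1(r,\cdot)+g_0(r,\cdot)$ with $g_1(r,\cdot):=\cP_\lambda g(r,\cdot)$ and $g_0(r,\cdot):=g(r,\cdot)-\cP_\lambda g(r,\cdot)$; this plays the role that the interpolation of $f$ between $L_\rho$ and $W^k_\rho$ plays in \cref{lem:quad1}. For the rough part one uses only the trivial bound on $\delta^n_r f$ together with the classical uniform-in-$n$ Gaussian upper bound on the density of $\bar X^n_r$: choosing, via the Gagliardo--Nirenberg interpolation inequality, an exponent $q'$ large and $\alpha'=\mu\alpha\in(0,\alpha)$ with $W^\alpha_m\cap L_q\hookrightarrow W^{\alpha'}_{q'}$, the density bound gives $\|g_0(r,\bar X^n_r)\|_{L_{2p}(\Omega)}\lesssim r^{-d/(2q')}\|g_0(r,\cdot)\|_{L_{q'}}$, and then the standard heat-semigroup characterisation $\|h-\cP_\lambda h\|_{L_{q'}}\lesssim\lambda^{\alpha'/2}\|h\|_{W^{\alpha'}_{q'}}$ together with H\"older's inequality in $r$ yields, for $q$ large,
\begin{equs}
 \Big\|\int_s^t g_0(r,\bar X^n_r)\,\delta^n_r f\,dr\Big\|_{L_p(\Omega)}\lesssim\lambda^{\alpha'/2}\,n^{-1/2}\,\|f\|_{C^1}\,\|g\|_{L_q((0,1);W^\alpha_m\cap L_q)}\,|t-s|^{1/2}.
\end{equs}
For the smooth part, $g_1(r,\cdot)=\cP_\lambda g(r,\cdot)$ is bounded and belongs to every H\"older class, with $\|g_1(r,\cdot)\|_{C^\beta}\lesssim\lambda^{-a}\|g(r,\cdot)\|_{L_{q'}}$ for suitable $\beta\in(0,1)$ and $a>0$; one then runs the stochastic sewing lemma exactly as in the proof of \cref{lem:quad1}, with germ $A_{s,t}=\int_s^t g_1(r,\bar X^n_s)\,\E^s\delta^n_r f\,dr$ but using \eqref{eq:C1-cond} in place of \eqref{eq:est-pre-paper} (the estimate of $\E^s\delta A_{s,u,t}$ being computed exactly as in \eqref{eq:delta-1}, with the same modification). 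This produces, for $q$ large,
\begin{equs}
 \Big\|\int_s^t g_1(r,\bar X^n_r)\,\delta^n_r f\,dr\Big\|_{L_p(\Omega)}\lesssim\lambda^{-a}\,n^{-1+\gamma}\,\|f\|_{C^1}\,\|g\|_{L_q((0,1);W^\alpha_m\cap L_q)}\,|t-s|^{1/2}.
\end{equs}
Adding the two contributions and choosing $\lambda=n^{(-1/2+\gamma)/(\alpha'/2+a)}$, which lies in $(0,1)$ since $\gamma<1/2$, balances them and gives the bound $n^{-1/2-\eps}|t-s|^{1/2}$ with $\eps=(1/2-\gamma)\,\tfrac{\alpha'/2}{\alpha'/2+a}>0$, which is the assertion.

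The main obstacle is twofold and essentially technical: obtaining \eqref{eq:C1-cond} with the correct power $n^{-1+\gamma}$ (the Lipschitz regularity of $f$ must be converted into the extra half-power of $n^{-1}$, without which the optimisation over $\lambda$ gives no gain), and arranging the heat-semigroup splitting of $g$ so that it interacts cleanly with the density bounds for $\bar X^n$, which are only uniform in $n$ and mildly singular as $r\downarrow0$ — this is precisely why the statement is phrased with $g\in L_q\big((0,1);W^\alpha_m\cap L_q\big)$ for $q$ at least some $q_0$. Everything else is bookkeeping with H\"older's inequality in time and the stochastic sewing lemma, following \cref{lem:quad1} verbatim.
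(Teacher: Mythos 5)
Your overall architecture is viable and genuinely different from the paper's proof, but one step as written is wrong: the justification of your key conditional estimate $|\E^s\big(f(\bar X^n_r)-f(\bar X^n_{\kappa_n(r)})\big)|\lesssim \|f\|_{C^1}\,n^{-1+\gamma}|r-s|^{-1/2+\gamma}$. Splitting $f=f_\eps+(f-f_\eps)$ with $\|f-f_\eps\|_{C^0}\lesssim\eps\|f\|_{C^1}$ and $\|f_\eps\|_{C^2}\lesssim\eps^{-1}\|f\|_{C^1}$, and using \eqref{eq:est-pre-paper} together with the bound $n^{-1}\|f\|_{C^2}$ from the proof of \cref{lem:quad1}, gives $\eps\|f\|_{C^1}n^{-1/2+\gamma}|r-s|^{-1/2+\gamma}+\eps^{-1}\|f\|_{C^1}n^{-1}$; optimising over $\eps$ yields only the geometric mean $\|f\|_{C^1}\,n^{-3/4+\gamma/2}|r-s|^{-1/4+\gamma/2}$, not the claimed $n^{-1+\gamma}|r-s|^{-1/2+\gamma}$. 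The stronger bound is plausibly true, but it requires a genuine argument (putting one derivative on $f$ and one on the conditional density via an integration by parts, in the spirit of the one-step computations in \cite{DGL}), which you do not supply. The slip is not fatal to your scheme: the exponent $-3/4+\gamma/2$ is still strictly below $-1/2$, so balancing $\lambda^{\alpha'/2}n^{-1/2}$ against $\lambda^{-a}n^{-3/4+\gamma/2}$ still produces a rate $n^{-(1+\eps)/2}$ for some smaller $\eps>0$, which is all the lemma asserts --- but as stated the derivation of the conditional estimate, and hence your quoted value of $\eps$, is incorrect. (Also note that the $C^2$ bound is only available for $|r-s|\geq 4/n$; on $|r-s|\leq 4/n$ one should fall back on \eqref{eq:better-rate}, which is harmless.)

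For comparison, the paper's proof never re-runs the sewing argument and needs no new conditional estimate: it combines the trivial bound (Lipschitz $f$, the heat-kernel-type density estimate for $X^n_r$, giving rate $n^{-1/2}$ for $g\in L_q((0,1);L_q\cap L_m)$) with a direct application of \cref{lem:quad1} in the case $g\in L_q((0,1);L_q\cap W^k_m)\subset L_q((0,1);C^\beta)$, $f\in C^1\subset C^0\cap W^\theta_\infty$ (giving rate $n^{-(1+\eps)/2}$), and then interpolates in $g$ between these two endpoint classes. Your heat-semigroup mollification of $g$ at an $n$-dependent scale $\lambda$, with explicit optimisation, is essentially a hands-on version of that interpolation; it buys a more explicit exponent at the price of having to establish the Lipschitz-level one-step estimate, which is exactly where the proposal stumbles. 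If you repair that step (either by the corrected interpolation bound above or by a direct \cite{DGL}-style computation), your argument goes through.
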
 

\begin{proof}
First, since $f$ is Lipschitz,  we have the trivial estimate 
\begin{equs}
 \big\|\int_s^t g (r, X^n_r) \big(  f(X^n_r)-f(X^n_{\kappa_n(r)})\big)\, dr\big\|_{L_p(\Omega)}
& \lesssim  \int_s^t \|g (r, X^n_r)\|_{L_{2p}(\Omega)}  \|X^n_r - X^n_{\kappa_n(r)} \|_{L_{2p}(\Omega)} \, dr.
\end{equs}
Recall that the density of $X^n_r$ behaves similarly to the heat kernel  at time $r$ in Lebesgue spaces (see, e.g.  Theorem 4.2 in \cite{GyK}), which combined with the above estimate  and the fact that $\|X^n_r - X^n_{\kappa_n(r)} \|_{L_{2p}(\Omega)} \leq N n^{-1/2}$, shows that for all sufficiently large $q$ we have 
\begin{equs}
 \big\|\int_s^t g (r, X^n_r) \big(  f(X^n_r)-f(X^n_{\kappa_n(r)})\big)\, dr\big\|_{L_p(\Omega)}
& \lesssim \| g\|_{L_q((0, 1) \times \R^d)} n^{-1/2} |t-s|^{1/2}
\\
&   \lesssim \| g\|_{L_q((0, 1) ; L_q(\R^d) \cap L_m(\R^d))} n^{-1/2} |t-s|^{1/2}.  \\
\label{eq:to_be_interpolated_2}
\end{equs}
On the other hand, suppose for now that $g  \in L_q((0, 1) ; L_q(\R^d) \cap W^k_m(\R^d) )$ for $k$ large so that $W^k_m(\R^d)  \subset C^1(\R^d)$.  Then, from  \cref{lem:quad1},  there exists $\eps>0$ such that for $q$ sufficiently large we have
\begin{equs}
 \big\|\int_s^t g (r, X^n_r)  \big(  f(X^n_r)-f(X^n_{\kappa_n(r)})\big)\, dr\big\|_{L_p(\Omega)}
&   \lesssim \|g\|_{L^q((0, 1);W^k_m(\R^d) )}  n^{-(1+\eps)/2}|t-s|^{1/2} 
  \\
  & \lesssim\|g\|_{L^q((0, 1);L_q (\R^d)  \cap W^k_m (\R^d))}  n^{-(1+\eps)/2}|t-s|^{1/2} .
\end{equs}
The result now follows by interpolation by means of the above bound and \eqref{eq:to_be_interpolated_2}. 
\end{proof}

\section{Proof of main results for H\"older $b$}	\label{sec:proof_Holder}

\begin{proof}[Proof of \cref{prop:integral_operator}]
It clearly suffices to consider the operator $f \mapsto (Q^Xf)$ on $C^1(\R^d)$, 
given by 
$$
(Q^Xf)_t= \int_0^t \nabla f(X_s) \, ds, 
$$
and to show that on $C^1(\R^d)$ it satisfies the estimate \eqref{it:continuity_in_Lp}.  Moreover, by virtue of Girsanov's theorem we may assume that the drift $b$ is identically zero. To this end,  let $z \in \R^d$ and let us set 
\begin{equs}
\cA_t= \int_0^t  \big( f (X_s)-f(X_s+z) \big)  \, ds, \qquad A_{s, t}= \E^s  \int_s^t  \big( f (X_r)-f(X_r+z) \big) \, dr. 
\end{equs}
Then,  by the Markov property it follows that 
\begin{equs}
A_{s, t}=   \int_s^t  \Big( \mathcal{P}_{r-s}f(X_s)-  \mathcal{P}_{r-s}f(X_s+z)  \Big) \, dr, 
\end{equs}
where $(\mathcal{P}_t)_{t \in [0,1]}$ is the semigroup of the diffusion $X$.  The following estimate is well-known (see, e.g., \cite[Theorem 6.4.3]{Lorenzi}) 
\begin{equs}
| \mathcal{P}_tf(x)-\mathcal{P}_tf(y)| \lesssim  \|f \|_{C^\alpha(\R^d)} |x-y| t^{(\alpha-1)/2},
\end{equs}
which gives 
\begin{equs}
\| A_{s, t} \|_{ L_p(\Omega)} \lesssim |z| \|f\|_{C^\alpha(\R^d)} |t-s|^{(1+\alpha)/2}.
\end{equs}
In addition, it is straightforward that $\E^s \delta A_{s, u, t}=0$.  It is also easy to see  that the germs $(s, t) \to A_{s, t}$ reconstruct the process $t \to \cA_t$ by means of the stochastic sewing lemma.  Hence we can conclude that 
\begin{equs}
\big\| \int_s^t  \big( f (X_r)-f(X_r+z) \big)  \, dr \big\|_{L_p(\Omega)}  \lesssim|z| \|f\|_{C^\alpha} |t-s|^{(1+\alpha)/2}.
\end{equs}
Since this was for arbitrary $z \in \R^d$, estimate \eqref{it:continuity_in_Lp} follows and consequently the whole proposition is proved. 
\end{proof}

\begin{proposition}                 \label{prop:pathwise_uniqueness} 
Let Assumptions \ref{asn:sigma} and \ref{asn:Holder} hold. Suppose that  $\{ (\Omega, \cF,  \mathbb{F}, \bP),  X, V, B, W\}$  and $\{ (\Omega, \cF,  \mathbb{F}', \bP),  X', V', B, W\}$  are solutions of the system \eqref{sde},\eqref{eq:V}.  Then, we have that  $X=X'$ and $V=V'$, $\bP$-almost surely.
\end{proposition}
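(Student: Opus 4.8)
The plan is to handle the diffusion components by classical pathwise uniqueness, and then to treat $V-V'$ as the solution of a \emph{homogeneous} linear hybrid Young--It\^o equation with vanishing initial datum. First, both $X$ and $X'$ solve \eqref{sde} on $(\Omega,\cF,\bP)$ driven by the same Brownian motion $B$, with $\sigma\in C^2(\R^d;\R^{d\times d})$ uniformly non-degenerate and $b$ bounded measurable; pathwise uniqueness for \eqref{sde} is classical in this regime (e.g.\ via the Zvonkin transform, or \cite{Veret80}). That $X,X'$ are a priori adapted to different filtrations $\mathbb F$, $\mathbb F'$ is immaterial: as in the proof of the Yamada--Watanabe theorem one reduces to a common filtration $\mathbb G$ with respect to which $B$ and $W$ remain Wiener processes. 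Hence $X=X'$ $\bP$-a.s., so $Q^Xb=Q^{X'}b=:Z$ and $\nabla\sigma(X)=\nabla\sigma(X')$. Setting $U:=V-V'$ and subtracting the two copies of \eqref{eq:Equation_V_Holder_indices}, the $W$-integrals cancel, the $B$-integrals combine, and the Young integrals combine, so that $U$ is continuous, $\mathbb G$-adapted, $U_0=0$, $U\in C^\beta([0,1];\R^d)$ $\bP$-a.s.\ for the $\beta>(1-\alpha)/2$ of \cref{def:solution_of_system}, and
\[
U_t=\int_0^tU_r\,dZ_r+\int_0^t\nabla\sigma(X_r)U_r\,dB_r,\qquad t\in[0,1].
\]
It remains to prove $U\equiv0$.

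After possibly decreasing $\beta$ we may assume $\beta\in((1-\alpha)/2,1/2)$, and we fix $\beta'\in(1-\beta,(1+\alpha)/2)$, so that $\beta<1/2<\beta'$ and $\beta+\beta'>1$; by \cref{rem:Holder-reg-a.s.}, $Z\in C^{\beta'}([0,1];\R^{d\times d})$ $\bP$-a.s., so the Young integral above is well defined. For $M\geq1$ let
\[
\rho_M:=\inf\big\{t\in[0,1]:\ [Z]_{C^{\beta'}([0,t])}\vee[U]_{C^\beta([0,t])}\geq M\big\}\wedge1,
\]
a $\mathbb G$-stopping time with $\rho_M\uparrow1$ $\bP$-a.s.; put $\bar U_t:=U_{t\wedge\rho_M}$, so that $[\bar U]_{C^\beta([0,1])}\leq M$ and $[Z]_{C^{\beta'}([0,\rho_M])}\leq M$.

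Now fix $p$ large and a mesh $h>0$, and partition $[0,1]$ into $I_i=[ih,(i+1)h]$. For $s<t$ in $I_i$ split $\bar U_t-\bar U_s$ into its Young part $\int_{s\wedge\rho_M}^{t\wedge\rho_M}\bar U_r\,dZ_r$ and its It\^o part $\int_s^t\mathbf{1}_{\{r\leq\rho_M\}}\nabla\sigma(X_r)\bar U_r\,dB_r$. The standard Young estimate on $[s\wedge\rho_M,t\wedge\rho_M]\subseteq[0,\rho_M]$, together with $\bar U_0=0$ and the fact that $r\mapsto r\wedge\rho_M$ is $1$-Lipschitz nondecreasing, gives $|\int_{s\wedge\rho_M}^{t\wedge\rho_M}\bar U_r\,dZ_r|\lesssim M|\bar U_s||t-s|^{\beta'}+M[\bar U]_{C^\beta(I_i)}|t-s|^{\beta+\beta'}$, while Burkholder--Davis--Gundy and Minkowski's inequality (with $\|\nabla\sigma\|_{C^0}<\infty$) give $\|\int_s^t\mathbf{1}_{\{r\leq\rho_M\}}\nabla\sigma(X_r)\bar U_r\,dB_r\|_{L_p(\Omega)}\lesssim_p|t-s|^{1/2}\sup_{r\in I_i}\|\bar U_r\|_{L_p(\Omega)}$. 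Since $\beta<1/2<\beta'$ and $\beta+\beta'>1$ there is $\vartheta>0$ with
\[
\|\bar U_t-\bar U_s\|_{L_p(\Omega)}\lesssim|t-s|^{1/2}\Big((1+Mh^{\vartheta})\sup_{r\in I_i}\|\bar U_r\|_{L_p(\Omega)}+Mh^{\vartheta}\big\|[\bar U]_{C^\beta(I_i)}\big\|_{L_p(\Omega)}\Big)
\]
for all $s<t$ in $I_i$. As $1/2>\beta$, the Garsia--Rodemich--Rumsey inequality (valid for $p$ large) promotes this to the same bound for $\|[\bar U]_{C^\beta(I_i)}\|_{L_p(\Omega)}$; choosing $h=h(M,p)$ small we absorb the last term, then use $\sup_{r\in I_i}|\bar U_r|\leq|\bar U_{ih}|+h^\beta[\bar U]_{C^\beta(I_i)}$ and absorb once more, to conclude $\|\sup_{r\in I_i}|\bar U_r|\|_{L_p(\Omega)}\leq 2\|\bar U_{ih}\|_{L_p(\Omega)}$ for $i=0,\dots,\lceil1/h\rceil-1$. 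Since $\bar U_0=0$, the case $i=0$ forces $\bar U\equiv0$ on $I_0$, hence $\bar U_h=0$; by induction $\bar U\equiv0$ on every $I_i$, so $U_{\cdot\wedge\rho_M}\equiv0$, and letting $M\to\infty$ with $\rho_M\uparrow1$ a.s.\ gives $U\equiv0$, i.e.\ $V=V'$ $\bP$-a.s.

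The crux is the last step: the equation couples a \emph{pathwise} Young integral, whose estimates demand uniform-in-$\omega$ control of the H\"older norms of $U$ and $Z$ (hence the stopping-time localization), with an It\^o integral whose natural bounds live in $L_p(\Omega)$ and only yield temporal regularity of order $1/2$. Reconciling these forces the choice $\beta<1/2$, a Kolmogorov/GRR step to convert pointwise-in-time $L_p$ bounds into $L_p(\Omega)$-valued $C^\beta$ bounds, and a two-fold absorption turning the a priori estimate into a genuine contraction with respect to the initial value on each small interval; everything else is routine.
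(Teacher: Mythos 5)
Your proposal is correct and follows essentially the same route as the paper: the difference $U=V-V'$ solves a homogeneous linear hybrid Young--It\^o equation, one localizes by stopping times so that the relevant H\"older seminorms of $Q^Xb$ and $U$ are bounded pathwise, combines the pathwise Young estimate with a BDG bound in $L_p(\Omega)$, uses a Kolmogorov/GRR-type interchange (the paper's \cref{lem:interchange-LpCa}) to close the estimate in $L_p(\Omega;C^\beta)$, and then absorbs on short intervals and iterates from $U_0=0$. The only differences are cosmetic: you use a single stopping time $\rho_M$ plus a deterministic mesh $h(M)$, whereas the paper uses a sequence of stopping times $\tau_n$ generating random small intervals with small H\"older increment of $L=Q^Xb$.
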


\begin{proof}
The fact that $X=X'$ is shown in \cite{Veret80}. Let us show that $V=V'$. Recall that by definition, $V, V' \in C^\beta([0, 1]; \R^d )$ with probability one, for some $\beta  \in  ( (1-\alpha)/2, 1/2)$.  Recall also that by Remark \ref{rem:In_C_alpha_almost_surely}, we can chose $\alpha' \in (0, (1+\alpha)/2)$ such that  $L:= Q^Xb \in C^{\alpha'}([0, 1]; \R^{d \times d})$ with probability one, and $\alpha'+\beta>1$.  
It is immediate that the difference $Y=V-V'$ satisfies the equation 
	\begin{equs}
	Y^{(\ell)}_t=  \int_0^t Y^{(j)}_r dL^{(\ell, j)}_r+  \int_0^t Z^{(\ell, j, i)}_r Y^{(j)}_r \, dB^{(i)}_r,   \qquad \ell \in \{1, \ldots, d \},
	\end{equs}
 where 
$Z^{(\ell, j, i)}_\cdot= \D_j \sigma^{(\ell, i)}(X_\cdot)$. As usual, we drop the indices below for notational convenience.  Next, let us denote by  $\mathbb{G}=\{ \mathcal{G}_t \}_{ t \in [0, 1]}$, the augmentation of the filtration $\{ \sigma( \mathcal{F}_s, \mathcal{F}'_s, s \leq t ) \} _{t \in [0, 1]}$.  Furthermore, let us chose 
 $\tilde{\beta} <\beta < \tilde{\alpha} < \alpha ' $ such that $\tilde{\alpha} + \tilde{\beta}  >1$. Since $L \in C^{\alpha'}$, $Y \in C^{\beta}$, and $\tilde{\alpha}< \alpha'$, $\tilde{\beta} < \beta$, it follows that for any $\mathbb{G}$-stopping time $\rho$,  the process $t \mapsto  [L]_{C^{\tilde{
 \alpha}
 }([\rho, \rho \vee t])} $ and $t \mapsto  [Y]_{C^{\tilde{
 \beta}
 }([\rho, \rho \vee t])} $ are H\"older continuous.  In addition, they are $\mathbb{G}$-adapted. 
	Let us take $\delta_1,\delta_2\in(0,1]$ (to be specified later) and define the stopping times $\rho_0=0$ and for $n\in\N$,
	\begin{equs}
	\rho_{n}:= \inf \{ t > \rho_{n-1} : [ L]_{C^{\tilde{\alpha}}([\rho_{n-1}, t])} >\delta_1 \text{ or } t - \rho_{n-1}> \delta_2\}.
	\end{equs}
	Let us furthermore define
	\begin{equs}
	\tau:= \inf \{ t > 0 : \| Y\|_{C^{\tilde{\beta}}([0, t])} > 1 \}\wedge 1
	\end{equs}
	and set $\tau_n=\rho_n\wedge\tau$. Note that if $\|Y\|_{C^{\tilde{\beta}}([0,\tau])}=0$, then $\tau=1$ and $Y\equiv 0$ on $[0,1]$.

	Note that for any $n\in\N_0$ one has
	\begin{align}        
	 [Y ]_{C^{\tilde{\beta}}([ \tau_n, \tau_{n+1}])} \leq  \Big [   \int_{\tau_n}^\cdot Y_r dL_r  \Big ]_{C^{\tilde{\beta}}([ \tau_n, \tau_{n+1}])}  + \Big [  \int_{\tau_n}^\cdot Z_r Y_r \, dB_r \Big ]_{C^{\tilde{\beta}}([ \tau_n, \tau_{n+1}])}.   
	\end{align}
	For the first term on the right hand side, we have with probability one 
	\begin{equs}
	 \Big [    \int_{\tau_n}^\cdot Y_r dL_r  \Big ]_{C^{\tilde{\beta}}([ \tau_n, \tau_{n+1}])}&  \leq  \Big [   \int_{\tau_n}^\cdot Y_r dL_r  \Big ] _{C^{\tilde{\beta}}([ \tau_n, \tau_{n+1}])} 
	 \\
	 & \leq  N \|  Y \|_{C^{\tilde{\beta}}([ \tau_n, \tau_{n+1}])} [L]_{C^{\tilde{\alpha}}([ \tau_n, \tau_{n+1}])}
	 \\
	 &\leq N\delta_1\big(|Y_{\tau_n}|+[Y]_{C^{\tilde{\beta}}([\tau_n,\tau_{n+1}])}\big),
	\end{equs}
	where $N$ depends only on $\tilde{\alpha}, \tilde{\beta}$ and $d$. 
	For the second term, notice that
	\begin{equs}
	 \Big [ \int_{\tau_n}^\cdot Z_r Y_r \, dB_r \Big ]_{C^\alpha([ \tau_n, \tau_{n+1}])} = \Big[ \int_{0}^\cdot \bone_{ \tau_n \leq r\leq  \tau_{n+1}} Z_r Y_r \, dB_r \Big]_{C^\alpha([0, 1])}.
	\end{equs}
	Hence, keeping in mind that $\tilde{\beta} <1/2$ and  using Lemma \ref{lem:interchange-LpCa} and the Burkholder--Davis--Gundy (BDG) inequality, we have for all sufficiently large $p$
	\begin{equs}
	 \Big [ \int_{\tau_n}^\cdot Z_r Y_r \, dB_r \Big ]_{L_p(\Omega; C^{\tilde{\beta}}([ \tau_n, \tau_{n+1}]))} 
	&\leq  N \Big [  \int_{0}^\cdot \bone_{ \tau_n \leq r\leq  \tau_{n+1}} Z_r Y_r \, dB_r \Big ] _{C^{1/2} ([0, 1]; L_p(\Omega))}
	\\
	&\leq  N \| Y \|_{L_p(\Omega; L_\infty([\tau_n, \tau_{n+1}]))}
	\\
	&\leq  N   \| Y_{\tau_n} \|_{L_p(\Omega)}+ N \delta_2^{\tilde{\beta}}   [  Y ]_{L_p(\Omega; C^{\tilde{\beta}} ([ \tau_n, \tau_{n+1}]))},
	\end{equs}
	where we have used that $Z$ is a process bounded by $\| \sigma\|_{C^1}$, that  by definition,  $\tau_{n+1}-\tau_n \leq \delta_2$,  and  $N$ depends only on ${\tilde{\beta}}, p$, and $d$. 
	
	Therefore we can conclude
	$$       
	[ Y ]_{L_p(\Omega; C^{\tilde{\beta}}([ \tau_n, \tau_{n+1}]))}\leq N \| Y_{\tau_n} \|_{L_p(\Omega)} +  N(\delta_1 +  \delta_2^{\tilde{\beta}})    [  Y ]_{L_p(\Omega; C^{\tilde{\beta}}([ \tau_n, \tau_{n+1}]))},
	$$
	where $N$ does not depend on $\delta_1,\delta_2$. Upon assuming that $\delta_1$ and $\delta_2$ are sufficiently small, we get 
	$$       
	[ Y ]_{L_p(\Omega; C^{\tilde{\beta}}([ \tau_n, \tau_{n+1}]))}\leq N \| Y_{\tau_n} \|_{L_p(\Omega)},
	$$
	and by iteration this gives $[ Y ]_{L_p(\Omega; C^{\tilde{\beta}}([ 0, \tau_{n+1}]))}=0$,  for all $n \in \mathbb{N}$.
	 For almost all $\omega \in \Omega$ we have that $\tau_n(\omega)=\tau(\omega)$ for all $n$ large enough, which implies $[ Y ]_{L_p(\Omega; C^{\tilde{\beta}}([ 0, \tau]))}=0$, finishing the proof.
\end{proof}

The next proposition follows from \cref{prop:pathwise_uniqueness} and the result of Yamada and Watanabe \cite[Proposition 1]{Yamada_Watanabe} (see also \cite[Proposition 3.20, p. 309]{Karatzas}).

\begin{proposition}                 \label{prop:weak_uniqueness} 
Let Assumptions \ref{asn:sigma} and \ref{asn:Holder} hold. Suppose that  $\{ (\Omega, \cF,  \mathbb{F}, \bP),  X, V, B, W\}$  and $\{ (\Omega', \cF',  \mathbb{F}', \bP'),  X', V', B', W'\}$  are solutions of the system \eqref{sde}, \eqref{eq:V}.  Then the laws of $(X, V, B, W)$ and  $(X', V', B', W')$ coincide.  
\end{proposition}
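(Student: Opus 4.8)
The plan is to derive the statement from the Yamada--Watanabe principle in its ``pathwise uniqueness implies uniqueness in law'' direction: \cref{prop:pathwise_uniqueness} establishes pathwise uniqueness for the system \eqref{sde},\eqref{eq:V}, and the abstract theorem (\cite[Proposition 1]{Yamada_Watanabe}; see also \cite[Proposition 3.20, p.~309]{Karatzas}) then forces any two solutions, however constructed, to have the same law. The only non-routine feature is that \eqref{eq:V} is not a classical It\^o SDE: the drift term $\int_0^t V_r\,d(Q^Xb)_r$ is a Young integral against $Q^Xb$, itself a nonlinear functional of $X$. So the first task is to phrase ``being a solution'' as a property of the joint law of the data.

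First I would observe that $Q^Xb$ is determined by $(X,B)$ up to a null set: indeed, each $Q^Xb^{(\ell)}$ is built in \cref{prop:integral_operator} as a stochastic-sewing limit of germs that are themselves measurable functionals of $(X,B)$ on sub-intervals, so it is a measurable image of $(X,B)$ together with the filtration. On the full-measure event where $V\in C^\beta$, $Q^Xb\in C^{\alpha'}$ and $\alpha'+\beta>1$ (as in \cref{def:solution_of_system} and \cref{rem:Holder-reg-a.s.}), the Young integral $\int_0^\cdot V_r\,d(Q^Xb)_r$ is a deterministic continuous functional of $(V,Q^Xb)$, while $\int_0^\cdot\nabla\sigma(X_r)V_r\,dB_r$ and $\int_0^\cdot\sigma\nabla\sigma(X_r)\,dW_r$ are limits in probability of Riemann sums formed from $(X,V,B,W)$; moreover $(B,W)$ is a standard Brownian motion in $\R^d\times\R^{d\times d}$ (note $W\indep B$ since $W\indep(X,B)$). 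Hence the relations defining a solution are, on that event, relations among $(X,V,Q^Xb,B,W)$ that involve only their joint law together with the natural filtration.

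The construction is then standard. Starting from two solutions $\{(\Omega,\cF,\mathbb F,\bP),X,V,B,W\}$ and $\{(\Omega',\cF',\mathbb F',\bP'),X',V',B',W'\}$, I would disintegrate the law of $(X,V,Q^Xb)$ over that of $(B,W)$, and likewise the law of $(X',V',Q^{X'}b)$ over that of $(B',W')$, obtaining regular conditional probability kernels; gluing these over a single copy $(\bar B,\bar W)$ of the driving noise yields, on one probability space, processes $(\bar X,\bar V)$ and $(\bar X',\bar V')$ with $(\bar X,\bar V,\bar B,\bar W)\overset{d}{=}(X,V,B,W)$ and $(\bar X',\bar V',\bar B,\bar W)\overset{d}{=}(X',V',B',W')$. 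By the previous paragraph, and because the It\^o integrals, being limits in probability, are preserved under this construction (while $Q^{\bar X}b$ agrees a.s.\ with the transferred copy of $Q^Xb$, both being produced by the same stochastic-sewing recipe from $(\bar X,\bar B)$), both $(\bar X,\bar V)$ and $(\bar X',\bar V')$ solve \eqref{sde},\eqref{eq:V} driven by the \emph{same} pair $(\bar B,\bar W)$, relative to the respective augmented natural filtrations --- each of which keeps $(\bar B,\bar W)$ a Wiener process. Now \cref{prop:pathwise_uniqueness} applies on this common space and gives $\bar X=\bar X'$ and $\bar V=\bar V'$ almost surely, whence the laws of $(X,V,B,W)$ and $(X',V',B',W')$ coincide.

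The step I expect to be the main obstacle is the transfer itself: verifying carefully that ``being a solution of \eqref{sde},\eqref{eq:V}'' is genuinely a property of the joint law, so that it survives passage to the canonical copies. This amounts to the usual but slightly delicate facts that stochastic integrals and stochastic-sewing limits are invariant under regular-conditional-probability constructions (being limits in probability, hence functions of the finite-dimensional distributions), plus the remark that the Young integral term is, on a full-measure set, a bona fide pathwise continuous functional. Granting these, the conclusion is the textbook Yamada--Watanabe argument.
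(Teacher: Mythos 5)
Your proposal is correct and follows essentially the same route as the paper: the paper deduces \cref{prop:weak_uniqueness} directly from the pathwise uniqueness of \cref{prop:pathwise_uniqueness} together with the Yamada--Watanabe principle (\cite[Proposition 1]{Yamada_Watanabe}, \cite[Proposition 3.20, p.~309]{Karatzas}), stating this in one sentence without further elaboration. Your additional care in checking that ``being a solution'' of the hybrid Young--It\^o system is a property of the joint law (via the measurability of $Q^Xb$ in $(X,B)$ and the stability of the integrals under the gluing construction) is exactly the verification the paper leaves implicit, so there is no substantive difference in approach.
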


We continue with a compactness result.
\begin{lemma}  \label{lem:uniform-bounds}
Let  Assumptions \ref{asn:sigma} and \ref{asn:Holder} hold.  Let $\{ (\Omega, \cF, \mathbb{F}, \bP), X, B\}$ be a solution of \eqref{sde} and define $X^n$ and $V^n$ be as in Theorem \ref{thm:main_Holder}. Then,  for any $p \geq 2$ and $\gamma<1/2$, we have 
\begin{equs}   \label{eq:uniform-bounds}
\sup_{n \in \mathbb{N}}\| V^n\|_{C^{1/2}([0, 1]; L_p(\Omega))}  < \infty , \qquad 
\sup_{n \in \mathbb{N}}\| V^n\|_{L_p(\Omega; C^{\gamma}([0, 1]))} < \infty.
\end{equs}

\end{lemma}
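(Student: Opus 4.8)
I will bound the $L_p(\Omega)$-norm of the increments of $V^n$ uniformly in $n$, deduce the first estimate in \eqref{eq:uniform-bounds}, and then obtain the second one by a Kolmogorov-type argument. From \eqref{sde} and the definition of $X^n$ in \cref{thm:main_Holder}, for $0\le s\le t\le1$ one has
\[
V^n_t-V^n_s=\sqrt n\int_s^t\big(b(X_r)-b(X^n_{\kappa_n(r)})\big)\,dr+\sqrt n\int_s^t\big(\sigma(X_r)-\sigma(X^n_{\kappa_n(r)})\big)\,dB_r,
\]
which I split, by inserting $b(X^n_r)$ and $\sigma(X^n_r)$, into four pieces: $\mathrm{(I)}=\sqrt n\int_s^t(b(X_r)-b(X^n_r))\,dr$, $\mathrm{(II)}=\sqrt n\int_s^t(b(X^n_r)-b(X^n_{\kappa_n(r)}))\,dr$, $\mathrm{(III)}=\sqrt n\int_s^t(\sigma(X_r)-\sigma(X^n_r))\,dB_r$, $\mathrm{(IV)}=\sqrt n\int_s^t(\sigma(X^n_r)-\sigma(X^n_{\kappa_n(r)}))\,dB_r$. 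For the two ``regular'' pieces I only use that $\sigma$ is Lipschitz, that $\|X^n_r-X^n_{\kappa_n(r)}\|_{L_p}\lesssim n^{-1/2}$ (from \eqref{EM}, boundedness of $b,\sigma$, and BDG), and that $X_r-X^n_r=V^n_r/\sqrt n$: the BDG and Minkowski inequalities give $\|\mathrm{(IV)}\|_{L_p}\lesssim\sqrt n\big(\int_s^t\|X^n_r-X^n_{\kappa_n(r)}\|_{L_p}^2\,dr\big)^{1/2}\lesssim|t-s|^{1/2}$ (the factor $\sqrt n$ being absorbed) and $\|\mathrm{(III)}\|_{L_p}\lesssim\big(\int_s^t\|V^n_r\|_{L_p}^2\,dr\big)^{1/2}$. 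Piece $\mathrm{(II)}$ is a quadrature term: $\|\mathrm{(II)}\|_{L_p}\lesssim n^{-\eps/2}|t-s|^{1/2}\lesssim|t-s|^{1/2}$ for some $\eps>0$, which is exactly the type of bound established in \cite{DGL} for H\"older drift, and can alternatively be deduced from \cref{lem:quad1} with $g\equiv1$ applied to a smooth compactly supported modification of $b$, the modification error being controlled by the Gaussian-type density bounds for $X^n$ of \cite{GyK}.

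The piece $\mathrm{(I)}$ is the heart of the matter, and is where the regularisation from the non-degenerate noise enters. I claim $\|\mathrm{(I)}\|_{L_p}\lesssim|t-s|^{(1+\alpha)/2}\lesssim|t-s|^{1/2}$, with constant independent of $n$. I prove this by the stochastic sewing lemma applied to the germ $A_{s,t}:=\E^s\big[\sqrt n\int_s^t(b(X_r)-b(X^n_r))\,dr\big]$: by the tower property $\E^s\delta A_{s,u,t}=0$ automatically, so the only hypothesis to check is $\|A_{s,t}\|_{L_p}\lesssim|t-s|^{1/2+\eps}$, which in turn reduces to the key estimate
\[
\big\|\E^s\big[b(X_r)-b(X^n_r)\big]\big\|_{L_p}\lesssim\|b\|_{C^\alpha}\,n^{-1/2}\,(r-s)^{(\alpha-1)/2}\qquad (r-s\ge c/n),
\]
since then $\|A_{s,t}\|_{L_p}\lesssim\sqrt n\cdot n^{-1/2}\int_s^t(r-s)^{(\alpha-1)/2}\,dr\lesssim|t-s|^{(1+\alpha)/2}$ (for $r-s\le c/n$ the trivial bound by $\|b\|_{C^0}$ suffices). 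The key estimate compares the conditional laws, given $\cF_s$, of $X_r$ and of $X^n_r$: under \cref{asn:sigma} both are, conditionally, comparable to Gaussians at scale $\sqrt{r-s}$ whose parameters differ by $O(n^{-1/2})$, so testing against the $\alpha$-H\"older function $b$ gains a factor $(r-s)^{\alpha/2}$ over the crude $(r-s)^{-1/2}$; making this rigorous — reducing to $b\equiv0$ by Girsanov and invoking the density bounds for the Euler scheme — is carried out with the heat-kernel techniques of \cite{DGL}. The stochastic sewing lemma then produces a reconstruction $\mathcal A$ with $\|\mathcal A_t-\mathcal A_s\|_{L_p}\lesssim|t-s|^{(1+\alpha)/2}$ uniformly in $n$; since $t\mapsto\sqrt n\int_0^t(b(X_r)-b(X^n_r))\,dr$ has conditional increments equal to $A_{s,t}$ and a trivial (possibly $n$-dependent) bound $\lesssim|t-s|$, the uniqueness part of the lemma identifies it with $\mathcal A$, giving the claim.

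Combining the four bounds, $\|V^n_t-V^n_s\|_{L_p}\lesssim|t-s|^{1/2}+\big(\int_s^t\|V^n_r\|_{L_p}^2\,dr\big)^{1/2}$ uniformly in $n$. Taking $s=0$ and squaring yields $\|V^n_t\|_{L_p}^2\lesssim1+\int_0^t\|V^n_r\|_{L_p}^2\,dr$, so Gronwall's inequality gives $\sup_n\sup_t\|V^n_t\|_{L_p}<\infty$; feeding this back into the bound for $\mathrm{(III)}$ gives $\sup_n\|V^n\|_{C^{1/2}([0,1];L_p(\Omega))}<\infty$, which is the first assertion. Since this holds for every $p\ge2$, the second assertion follows by a Kolmogorov-type continuity estimate — quantitatively, by \cref{lem:interchange-LpCa} applied with the exponents $1/2$ and $\gamma<1/2$ and with a sufficiently large auxiliary exponent in place of $p$ — which converts a bound in $C^{1/2}([0,1];L_{p'}(\Omega))$ into one in $L_p(\Omega;C^\gamma([0,1]))$. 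The main obstacle is the key estimate for $\mathrm{(I)}$: quantifying, uniformly in $n$ and with the sharp time singularity $(r-s)^{(\alpha-1)/2}$, how the non-degenerate diffusion regularises the difference $b(X_r)-b(X^n_r)$ of the irregular drift evaluated along the exact solution and along its Euler approximation.
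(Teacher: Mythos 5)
There is a genuine gap, and it sits exactly at the point you yourself flag as ``the heart of the matter'': the treatment of the drift-difference term $\mathrm{(I)}$. Your key estimate
\[
\big\|\E^s\big[b(X_r)-b(X^n_r)\big]\big\|_{L_p(\Omega)}\lesssim\|b\|_{C^\alpha}\,n^{-1/2}\,(r-s)^{(\alpha-1)/2}
\]
is justified by saying that, conditionally on $\cF_s$, the laws of $X_r$ and $X^n_r$ are Gaussian-like at scale $\sqrt{r-s}$ ``whose parameters differ by $O(n^{-1/2})$''. But the conditional laws are (essentially) those of the diffusion started at $X_s$ and of the Euler scheme started at $X^n_s$, so the relevant parameter discrepancy is $|X_s-X^n_s|=n^{-1/2}|V^n_s|$, and the assertion that this is $O(n^{-1/2})$ in $L_p$ is precisely $\sup_t\|V^n_t\|_{L_p}\lesssim 1$, i.e.\ the conclusion of the lemma. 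Splitting $\E^s[b(X_r)-b(X^n_r)]$ via the semigroup as $[\cP_{r-s}b(X_s)-\cP_{r-s}b(X^n_s)]+[\text{weak Euler error started at }X^n_s]$ makes this explicit: the second piece is of DGL type and fine, but the first is $\lesssim\|b\|_{C^\alpha}(r-s)^{(\alpha-1)/2}|X_s-X^n_s|$, which cannot be bounded by $n^{-1/2}$ without the very estimate being proved. The proposed repair (``reduce to $b\equiv0$ by Girsanov and invoke density bounds'') does not resolve this either: Girsanov can remove the drift from a functional of $X^n$ alone (as in \cref{lem:quad1}), but here $X$ and $X^n$ appear simultaneously under the same measure, and the starting-point discrepancy survives any change of measure. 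So as written, the bound $\|\mathrm{(I)}\|_{L_p}\lesssim|t-s|^{(1+\alpha)/2}$ with a constant independent of $V^n$ is circular, and the subsequent Gronwall step has no valid input. (Unless, of course, you separately quote a strong-rate result $\sup_t\|X_t-X^n_t\|_{L_p}\lesssim n^{-1/2}$ for H\"older drift from the literature; but then the lemma is essentially that quotation, and your sketch does not do this.)

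The paper avoids the circularity by not asking for an unconditional bound on this term: it invokes \cite[Lemma 6.4]{ButDarGEr} (a stochastic-sewing estimate) in the form
\[
\Big\|\int_s^t[b(X_r)-b(X^n_r)]\,dr\Big\|_{L_p(\Omega)}\lesssim|t-s|\,\|Y^n\|_{C^{1/2}([S,T];L_p(\Omega))}+|t-s|^{1/2}\|Y^n_S\|_{L_p(\Omega)},\qquad Y^n=X-X^n,
\]
i.e.\ a bound \emph{linear in the unknown}, and then closes the estimate by absorbing the $\|Y^n\|_{C^{1/2}}$ term on subintervals of length $<1/(2N_0)$ and iterating over finitely many such intervals, which yields the stronger statement $\|Y^n\|_{C^{1/2}([0,1];L_p)}\lesssim n^{-1/2}$ (hence the $C^{1/2}$ bound for $V^n$). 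Your remaining steps --- the four-term decomposition, the quadrature term via \cref{lem:quad1} with $\rho=\infty$, BDG for the stochastic integrals, and \cref{lem:interchange-LpCa} for passing from $C^{1/2}([0,1];L_p(\Omega))$ to $L_p(\Omega;C^\gamma([0,1]))$ --- all match the paper; but to make your argument work you must either adopt this self-improving (absorption plus iteration) structure for the drift term or cite an independent strong convergence rate, and neither is currently in place.
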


\begin{proof}
Throughout the proof the proportionality constants in $\lesssim$ depend only  on $d, \lambda, \alpha, \| b\|_{C^\alpha}, \|\sigma\|_{C^2}, p$.

We begin by proving the first inequality in \eqref{eq:uniform-bounds}. Let us set $Y^n=X-X^n$ and fix  $S , T \in [0,1]$ with $S < T$. For $s, t \in [S, T]$ with $s<t$ we have 
\begin{equs}       
Y^n_t-Y^n_s &= \int_s^t [b(X_r)- b(X^n_r)] \, dr + \int_s^t [b(X^n_r)- b(X^n_{\kappa_n (r)})] \, dr \\
&\quad +\int_s^t [\sigma (X_r)- \sigma (X^n_r)] \, d B_r+\int_s^t [\sigma (X^n_r)- \sigma (X^n_{\kappa_n(r)})] \, d B_r.     \label{eq:error-decomp}
\end{equs}  
For the first term in \eqref{eq:error-decomp}, by \cite[Lemma 6.4]{ButDarGEr} (with $\tau=1/2$)  we have that 
\begin{equs}
\big\| \int_s^t [b(X_r)- b(X^n_r)] \, dr \big\|_{L_p(\Omega)} \lesssim  |t-s| \|  Y^n  \|_{C^{1/2}([S, T]; L_p(\Omega))} + |t-s|^{1/2}\| Y^n_S\|_{L_p(\Omega)}.
\end{equs}
For the second term in \eqref{eq:error-decomp}, we have by applying  \cref{lem:quad1}  with $\rho=\infty$, we get 
\begin{equs}
\big\|\int_s^t [b(X_r^n)-b(X_{\kappa_n(r)}^n)]\, dr\big\|_{L_p(\Omega)}
\lesssim n^{-1/2-\eps/2}|t-s|^{1/2}
\end{equs}
with some $\eps>0.$
For the third term,  by the BDG inequality and the regularity of $\sigma$, one easily sees that
\begin{equs}
\big\| \int_s^t [\sigma (X_r)- \sigma (X^n_r)] \, d B_r 
\big\|_{L_p(\Omega)} & \lesssim |t-s|^{1/2} \sup_{r \in [s, t]} \| Y^n_r\|_{L_p(\Omega)}
\\
& \lesssim  |t-s|^{1/2}\big(|T-S|^{1/2} \|  Y^n  \|_{C^{1/2}([S, T]; L_p(\Omega))} +\| Y^n_S\|_{L_p(\Omega)}\big).
\end{equs}
Similarly, for the last term we have 
\begin{equs}
\big\| \int_s^t [\sigma (X^n_r)- \sigma (X^n_{\kappa_n(r)})] \, d B_r \big\|_{L_p(\Omega)} &  \lesssim  |t-s|^{1/2} \sup_{r \in [s, t]} \| X^n_r-X^n_{\kappa_n(r)}\|_{L_p(\Omega)}
\\
&\lesssim  |t-s|^{1/2} n^{-1/2}. 
\end{equs}
Hence, combining all the above estimates with \eqref{eq:error-decomp}, gives 
\begin{equs}
\|Y^n_t-Y^n_s\|_{L_p(\Omega)} \lesssim  |t-s|^{1/2}\big(|T-S|^{1/2} \|  Y^n  \|_{C^{1/2}([S, T]; L_p(\Omega))} +\| Y^n_S\|_{L_p(\Omega)}+n^{-1/2}\big).
\end{equs}
Dividing  by $|t-s|^{1/2}$ and taking suprema over $s, t \in [S,T]$, gives 
$$
[ Y^n] _{C^{1/2}([S, T]; L_p(\Omega))} \lesssim \left(    |T-S| ^{1/2} \|  Y^n  \|_{C^{1/2}([S, T]; L_p(\Omega))} +\| Y^n_S\|_{L_p(\Omega)} + n^{-1/2}   \right),
$$
which further gives 
$$
\|Y^n \| _{C^{1/2}([S, T]; L_p(\Omega))} \leq N_0 \left(    |T-S| ^{1/2} \|  Y^n  \|_{C^{1/2}([S, T]; L_p(\Omega))} +\| Y^n_S\|_{L_p(\Omega)} + n^{-1/2}   \right)
$$
with $N_0$ not depending on $S,T,n$.
If $|T-S| < 1/(2N_0)$, the above gives 
\begin{equs}
\|Y^n \| _{C^{1/2}([S, T]; L_p(\Omega))} \lesssim \left(   \| Y^n_S\|_{L_p(\Omega)} + n^{-1/2}   \right).
\end{equs}
Set now $S_k= k/   \left \lceil{2 N_0}\right \rceil  $, $k =0, ..., \left \lceil{2 N_0}\right \rceil$. We obviously have $|S_{k+1}-S_k| \leq 1/(2N_0)$, which gives
\begin{equs}      \label{eq:to-be-substituted}
\|Y^n \| _{C^{1/2}([S_k, S_{k+1}]; L_p(\Omega))} \lesssim \left(   \| Y^n_{S_k}\|_{L_p(\Omega)} + n^{-1/2}   \right),
\end{equs}
for all $k =0, ..., \left \lceil{2 N_0}\right \rceil$. In particular, this gives 
\begin{equs}
 \| Y^n_{S_{k+1}}\|_{L_p(\Omega)}  \lesssim  \left(   \| Y^n_{S_k}\|_{L_p(\Omega)} + n^{-1/2}   \right),
\end{equs}
which by iteration gives 
\begin{equs}
 \| Y^n_{S_k}\|_{L_p(\Omega)}  \lesssim n^{-1/2}.
\end{equs}
Substituting the above into \eqref{eq:to-be-substituted}, we conclude that 
\begin{equs}
\|Y^n \| _{C^{1/2}([S_k, S_{k+1}]; L_p(\Omega))}\lesssim n^{-1/2} .
\end{equs}
Consequently, we get 
\begin{equs}
\|Y^n \| _{C^{1/2}([0, 1]; L_p(\Omega))} &  \lesssim  \sup_k \|Y^n \| _{C^{1/2}([S_k, S_{k+1}]; L_p(\Omega))}
 \lesssim n^{-1/2},
\end{equs}
which shows the first inequality of the claim. The second now follows from \cref{lem:interchange-LpCa}.
\end{proof}

\begin{lemma}\label{lem:limit-singlar-term}
Let Assumptions \ref{asn:sigma} and \ref{asn:Holder} hold. For $n \in \mathbb{N}$,  let $\{ ( \bar \Omega,  \bar \cF,   \bar\bP),  \bar{\mathbb{F}}^n, \bar{\mathcal{X}}^n,  \bar B^n\}$,  $\{ ( \bar \Omega,  \bar \cF,   \bar\bP),  \bar{\mathbb{F}}, \bar{\mathcal{X}},   \bar B\}$, be  solutions of \eqref{sde}.  Define the processes
\begin{equs}
d\bar X^n_t&= b(\bar X^n_{\kappa_n(t)}) \, dt + \sigma(\bar X^n_{\kappa_n(t)}) \, d\bar B^n_t,  \qquad \bar X^n_0=x_0,
\\
\bar V^n& = \sqrt{n}( \bar{\mathcal{X}}^n- \bar X^n).
\end{equs}
Finally,  let  $\beta \in (0, 1/2]$, $q,  \bar{q}\geq 1$ such that  $(1+\alpha)/2+\beta >1$ and $\bar q<q$, and assume that 
\begin{enumerate}[(i)]
\item  There exists $\bar V \in L_q(\bar \Omega ; C^\beta([0, 1]; \R^d))$ such that $lim_{n \to \infty} \|\bar V-\bar V^n\|_{ L_q(\bar\Omega; C^\beta([0,1])) }=0$,

\item   $lim_{n \to \infty} \sup_{t \in [0, 1]} \|\bar{\mathcal{X}}^n_t -\bar{\mathcal{X}}_t\|_{ L_{q\bar q/(q-\bar q)}(\bar\Omega)}=0$.
\end{enumerate}

Then,  for any $f \in C^{\alpha+}(\R^d)$  and  $\gamma \in (0,  \alpha)$ we have 
\begin{equs}         \label{eq:convergence-singular-term}
\lim_{n \to \infty} \Big\|\int_0^\cdot \sqrt{n}  (f(\bar{\mathcal{X}}^n_r)-f(\bar X^n_r))\, dr -  \int_0^ \cdot \bar  V^{(j)}_r \, d( Q^{\bar{\mathcal{X}}}f)^{(j)}_r  \Big\|_{C^{(1+\gamma)/2} ([0, 1] ; L_{\bar q}( \bar \Omega))}  =0 .
\end{equs}
\end{lemma}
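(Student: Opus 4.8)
The strategy is to write $f(\bar{\mathcal{X}}^n_r) - f(\bar X^n_r)$ as a kind of ``first-order Taylor expansion'' whose increment is $\bar V^n$, and then to pass to the limit in each resulting term. The cleanest way to make this rigorous is via the stochastic sewing lemma, treating everything on the level of germs indexed by $(s,t)$. For a fixed mollification parameter one could write $f = f_\eps + (f - f_\eps)$ with $f_\eps$ smooth; for smooth $f$ we have the exact identity $\sqrt n (f_\eps(\bar{\mathcal{X}}^n_r) - f_\eps(\bar X^n_r)) = \int_0^1 \nabla f_\eps(\bar X^n_r + \theta(\bar{\mathcal{X}}^n_r - \bar X^n_r))\,d\theta \cdot \bar V^n_r$, but the gradient is evaluated along a moving point, so the natural comparison object is $\bar V^{n,(j)}_r\, d(Q^{\bar{\mathcal{X}}^n} f_\eps)^{(j)}_r$ where now $Q^{\bar{\mathcal{X}}^n}$ is the operator of \cref{prop:integral_operator} built on the Euler scheme. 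Thus I would split the difference in \eqref{eq:convergence-singular-term} as (a) replacing $f$ by $f_\eps$ in both integrals; (b) for smooth $f_\eps$, showing $\int_0^\cdot \sqrt n(f_\eps(\bar{\mathcal{X}}^n_r) - f_\eps(\bar X^n_r))\,dr - \int_0^\cdot \bar V^{n,(j)}_r\,d(Q^{\bar{\mathcal{X}}^n}f_\eps)^{(j)}_r \to 0$; (c) passing $\bar V^n \to \bar V$, $\bar{\mathcal{X}}^n \to \bar{\mathcal{X}}$, $Q^{\bar{\mathcal{X}}^n} \to Q^{\bar{\mathcal{X}}}$ in the Young integral for fixed smooth $f_\eps$; and (d) sending $\eps \to 0$, using the boundedness of the operator $Q$ on $C^{\alpha+}$ (\cref{prop:integral_operator}\eqref{it:continuity_in_Lp}, \cref{rem:Holder-reg-a.s.}) together with $f_\eps \to f$ in $C^\alpha$ — here I must be careful that $C^{\alpha+}$ (not $C^\alpha$) is the space in which mollifications converge, which is exactly why \cref{asn:Holder} is stated with $C^{\alpha+}$.

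For step (b), the heart of the matter, I would apply the stochastic sewing lemma to the germ
\begin{equs}
A_{s,t} = \E^s \int_s^t \sqrt n\, \big(f_\eps(\bar{\mathcal{X}}^n_r) - f_\eps(\bar X^n_r)\big)\,dr - \bar V^{n}_s \cdot \E^s\big((Q^{\bar{\mathcal{X}}^n}f_\eps)_t - (Q^{\bar{\mathcal{X}}^n}f_\eps)_s\big),
\end{equs}
or more precisely to the difference of the two running integrals minus this germ, so that the sewing lemma identifies the running integral of the germ as the true difference process up to a controlled error. The required bounds are: the diagonal bound $\|A_{s,t}\|_{L_{\bar q}} \lesssim_\eps |t-s|^{(1+\alpha)/2 + \text{something}}$ with exponent strictly bigger than $1/2$, and the ``$\delta A$'' bound $\|\E^s \delta A_{s,u,t}\|_{L_{\bar q}} \lesssim |t-s|^{1+\kappa}$; the quadrature estimates of \cref{sec.quadrature} (\cref{lem:quad1}, \cref{lem:(ii)}) and the regularisation estimate \eqref{eq:est-pre-paper} are the inputs here, the key point being that $\E^s(f_\eps(\bar{\mathcal{X}}^n_r) - f_\eps(\bar X^n_r))$ enjoys an extra half-power of $n^{-1}$ over the trivial Lipschitz bound because of the smoothing of the Euler kernel, and that the Young-integral increment $\bar V^n_r \cdot d(Q^{\bar{\mathcal{X}}^n}f_\eps)_r$ matches the leading Taylor term. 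One also needs the $\bar V^n$ factor frozen at time $s$ to have the right Hölder-in-$s$ modulus, which follows from \cref{lem:uniform-bounds} (uniform $C^{1/2}$-in-$L_p$ bounds on $V^n$, hence on $\bar V^n$), and the error of freezing $\bar V^n_r$ at $s$ is absorbed using $\beta$-Hölder regularity of $\bar V^n$ against the $(1+\alpha)/2$-Hölder regularity of $Q^{\bar{\mathcal{X}}^n}f_\eps$, with $\beta + (1+\alpha)/2 > 1$.

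For steps (a), (c), (d) the arguments are softer. In (a) and (d) I bound the contribution of $f - f_\eps$ directly: the first integral by $\sqrt n \cdot \|f - f_\eps\|_{C^0}$ is hopeless, so instead I must bound $\|\int_0^\cdot \sqrt n((f-f_\eps)(\bar{\mathcal{X}}^n_r) - (f-f_\eps)(\bar X^n_r))\,dr\|_{C^{(1+\gamma)/2}(L_{\bar q})}$ uniformly in $n$ by $\|f - f_\eps\|_{C^\alpha}$ using exactly the kind of stochastic-sewing/quadrature estimate that underlies \cref{prop:integral_operator} and \cref{lem:limit-singlar-term} itself, applied to $f - f_\eps$; this is the place where the uniform-in-$n$ estimate has to be stated in a form that sees the $C^\alpha$-norm of the integrand linearly, and it is why $f \in C^{\alpha+}$ is needed so that $\|f - f_\eps\|_{C^\alpha} \to 0$. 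The second integral is handled by \cref{rem:Holder-reg-a.s.} ($\|Q^{\bar{\mathcal{X}}^n}(f-f_\eps)\|_{L_q(C^{\alpha'})} \lesssim \|f-f_\eps\|_{C^\alpha}$, uniformly in $n$ by the same proof applied to the Euler scheme) together with the Young bound and the uniform bound on $\|\bar V^n\|_{L_q(C^\beta)}$. Step (c) is a standard continuity-of-the-Young-integral argument: for fixed smooth $f_\eps$, $Q^{\bar{\mathcal{X}}^n}f_\eps = \int_0^\cdot \nabla f_\eps(\bar{\mathcal{X}}^n_r)\,dr$ and $Q^{\bar{\mathcal{X}}}f_\eps = \int_0^\cdot \nabla f_\eps(\bar{\mathcal{X}}_r)\,dr$ are honest Lebesgue integrals, so $Q^{\bar{\mathcal{X}}^n}f_\eps \to Q^{\bar{\mathcal{X}}}f_\eps$ in $L_{\text{something}}(C^1)$ from hypothesis (ii) and Lipschitzness of $\nabla f_\eps$, and then the Young integral is continuous in both the integrand (in $C^\beta$, hypothesis (i)) and the integrator (in $C^{\alpha'}$) — with a uniform bound from \cref{lem:uniform-bounds} one can also pass $\bar V^n \to \bar V$ and close via Hölder's inequality in $\Omega$ using $\bar q < q$.

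\textbf{Main obstacle.} The genuinely delicate step is (b): matching, at the level of germs, the $O(n^{-1/2})$ ``oscillation'' of $f_\eps(\bar{\mathcal{X}}^n_r) - f_\eps(\bar X^n_r)$ against the Young differential $\bar V^n_r\,d(Q^{\bar{\mathcal{X}}^n}f_\eps)_r$ and showing the mismatch is summable under sewing. The subtlety is that $Q^{\bar{\mathcal{X}}^n}f_\eps$ is built from $\nabla f_\eps$ evaluated along $\bar{\mathcal{X}}^n$ (the \emph{true} solution's analogue on the Euler grid), whereas the Taylor expansion of $f_\eps(\bar{\mathcal{X}}^n_r) - f_\eps(\bar X^n_r)$ naturally produces $\nabla f_\eps$ evaluated along the segment joining $\bar X^n_r$ to $\bar{\mathcal{X}}^n_r$; reconciling these requires the second-order term of the Taylor expansion to contribute only a higher-order germ, which uses that $\bar{\mathcal{X}}^n_r - \bar X^n_r = n^{-1/2}\bar V^n_r$ is itself $O(n^{-1/2})$, giving an $n^{-1}$-order quadratic term that, after multiplication by $\sqrt n$, is $O(n^{-1/2})$ and integrates to something $\to 0$. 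Tracking the $\eps$-dependence of all constants while keeping the final exponents of $|t-s|$ strictly above the sewing thresholds, uniformly in $n$, is where the bookkeeping is heaviest.
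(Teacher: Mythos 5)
Your proposal is correct in substance and follows the same skeleton as the paper's proof: approximate $f$ by smooth functions in $C^\alpha$ (this is exactly why \cref{asn:Holder} asks for $C^{\alpha+}$, as you note), establish a uniform-in-$n$ operator bound that sees $\|f\|_{C^\alpha}$ linearly, prove convergence for smooth integrands, and close with a three-term density argument. The differences are in where the work is placed. The paper defines the two linear maps $\mathcal{I}^n(f)=\int_0^\cdot\sqrt n\,(f(\bar{\mathcal X}^n_r)-f(\bar X^n_r))\,dr$ and $\mathcal{I}(f)=\int_0^\cdot\bar V_r\,d(Q^{\bar{\mathcal X}}f)_r$, gets the uniform bound $\sup_n\|\mathcal{I}^n\|\lesssim 1$ directly from the quadrature estimate \cite[Lemma 6.4]{ButDarGEr} (applied with $\tau=\beta$, producing exactly the factor $\|V^n\|_{C^\beta([0,1];L_{\bar q})}\|f\|_{C^\alpha}|t-s|^\theta$ you anticipated), and for smooth $g$ compares $\mathcal{I}^n(g)$ with $\mathcal{I}(g)$ by a soft fundamental-theorem-of-calculus estimate: since $Q^{\bar{\mathcal X}}g=\int_0^\cdot\nabla g(\bar{\mathcal X}_r)\,dr$, the difference of integrands is controlled by $\|g\|_{C^2}$ times $\|\bar V^n-\bar V\|$, $\|(\bar{\mathcal X}^n-\bar{\mathcal X})\bar V\|$ and $\|(\bar X^n-\bar{\mathcal X}^n)\bar V\|$, all of which vanish by hypotheses (i)--(ii), H\"older with exponents $q$ and $q\bar q/(q-\bar q)$, and the strong convergence of the Euler scheme; no sewing and no intermediate object $Q^{\bar{\mathcal X}^n}f_\eps$ are needed for the smooth part. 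Your step (b), which you flag as the heart of the matter, is therefore an over-engineering: for smooth $f_\eps$ both expressions are Lebesgue integrals and the mismatch is the quadratic Taylor remainder, of size $n^{-1/2}\|f_\eps\|_{C^2}\,|\bar V^n_r|^2$, handled by the uniform moment bounds of \cref{lem:uniform-bounds} without any germ/sewing machinery (if you do keep the quadratic term, note you need $\bar V^n_r$ in $L_{2\bar q}$ uniformly, which comes from \cref{lem:uniform-bounds} rather than from hypothesis (i) alone). The genuinely hard input is the one you treat most briefly, namely the uniform bound for the rough remainder $f-f_\eps$; you correctly identify its required form and that it is a stochastic-sewing quadrature estimate, but it is not quite the estimate behind \cref{prop:integral_operator} (a one-process statement) — it is a two-process bound on $\int\sqrt n\,(h(\bar{\mathcal X}^n)-h(\bar X^n))\,dr$ in terms of $\|h\|_{C^\alpha}$ and the $C^\beta$-in-$L_p$ modulus of $\bar V^n$, which the paper imports as a black box rather than reproving. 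With that estimate granted, your argument closes along the same lines as the paper's.
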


\begin{proof}
Let $\eps \in (0,1)$ be small so that  
\begin{equs}
\theta+\beta:= (1+\alpha)/2-\eps  + \beta >1,\qquad  (1+\gamma)/2 < \theta.
\end{equs}  
Throughout the proof the proportionality constants in $\lesssim$ depend only  on $\alpha,\beta,\gamma,\eps,d,q,\bar q,$ and $\|\bar V\|_{L_q(\bar\Omega;C^\beta([0,1]))}$. 
Notice that by \cref{rem:Holder-reg-a.s.},  for $f\in C^{\alpha+}$ we have that   $ Q^{\bar{\mathcal{X}}}f \in L_p ( \bar  \Omega ; C^\theta ([0,1]; \R^d))$ for all $p \geq 1$.  In particular, the Young integrals appearing in 
\eqref{eq:convergence-singular-term} are well defined and satisfy
\begin{equs}
 \big\|  \int_s^t \bar{V}^{(j)}_r \, d ( Q^{\bar{\mathcal{X}}}f)^{(j)}_r  \big\|_{L_{\bar{q}}(\bar \Omega)} &  \lesssim \big\|  \| \bar{V} \|_{C^\beta([0,1])}  \| Q^{\bar{\mathcal{X}}}f \|_{C^\theta ([0,1])}  \big\|_{L_{\bar{q}}(\bar \Omega)} |t-s|^\theta 
 \\
&   \lesssim \| \bar{V} \| _{L_q(\bar \Omega ; C^\beta([0,1]) ) }   \|  Q^{\bar{\mathcal{X}}}f \|_{L_{q\bar q/(q-\bar q)}( \bar \Omega; C^{ \theta }([0,1]) ) } |t-s|^\theta 
\\
& \lesssim \| f\|_{C^{\alpha}(\R^d)} |t-s|^\theta,
\end{equs}
where we have used again \cref{rem:Holder-reg-a.s.}. 
Hence,  the mapping $\cI$ given by 
\begin{equs}
f  \mapsto \sum_{j=1}^d \int_0^\cdot \bar{V}^{(j)}_r \, d  (  Q^{\bar{\mathcal{X}}}f)^{(j)}_r
\end{equs}
is a bounded linear operator from $ C^{\alpha+} ( \R^d)$ to $ C^\theta( [0,1]; L_{\bar q} (\bar \Omega))$.
Let us similarly define the operators 
 $\mathcal{I}^n:C^{\alpha+} ( \R^d) \to  C^\theta( [0,1]; L_{\bar q} (\bar \Omega))$ by 
 \begin{equs}
 f \mapsto \int_0^\cdot  \sqrt{n}    (f(\bar{ \mathcal{X}}^n_r)-f(\bar X^n_r))\, dr. 
 \end{equs}
By applying  \cite[Lemma 6.4]{ButDarGEr} with  $\tau=\beta$  we get 
 \begin{equs}
\big\|  \int_s^t  \sqrt{n}    (f(\bar{ \mathcal{X}}^n_r)-f(\bar X^n_r))\, dr \big\|_{L_{\bar q}(\bar \Omega)} \lesssim\| V^n \|_{ C^\beta ([0, 1]; L_{\bar q}(\bar \Omega) ) } \| f \|_{C^{\alpha}(\R^d)} |t-s|^\theta 
 \end{equs}
By assumption $(i)$ and the trivial embedding $ L_{\bar q}(\bar \Omega; C^\beta ([0, 1]))\subset C^\beta ([0, 1]; L_{\bar q}(\bar \Omega) )$ we get that
 \begin{equs}   \label{eq:Op-norm-bounded}
 \sup_{n\in\N} \|\mathcal{I}^n\|_{\cL(C^{\alpha+} ( \R^d);  C^\theta( [0,1]; L_{\bar q} (\bar \Omega))}\lesssim 1.
 \end{equs}
If $g  \in C^\infty(\R^d)$,  by  \cref{prop:integral_operator} \eqref{item:op-on-smooth} and the fundamental theorem of calculus  we have 
 \begin{equs}
 \| &(\mathcal{I}^n_t(g)- \mathcal{I}^n_s(g))-( \mathcal{I}_t(g)- \mathcal{I}_s(g))\| _{L_{\bar q}(\bar \Omega)}
 \\
  & =\  \big\|  \int_s^t \big(  \int_0^1 \nabla g (\bar{\mathcal{X}}^n_r + \vartheta ( \bar X^n_r -\bar{\mathcal{X}}^n_r) ) \bar V^n_r  \, d \vartheta - \nabla g(\bar{\mathcal{X}}_r) \bar V_r  \big) \, dr  \big\|_{L_{\bar q}(\bar \Omega)}
\\
  &  \leq \|g\|_{C^2}|t-s|   \Big(  \sup_{t \in [0, 1]} \|\bar V^n_t-\bar V_t \|_{L_{\bar q}(\bar \Omega)}+  \sup_{t \in [0, 1]} \| (\bar{\mathcal{X}}^n_t-\bar{\mathcal{X}}_t) \bar V_t  \|_{L_{\bar q}(\bar \Omega)} 
\\
&\,\hspace{7cm}+ \sup_{t \in [0, 1]} \| (\bar X^n_t-\bar{\mathcal{X}}^n_t) \bar V_t  \|_{L_{\bar q}(\bar \Omega)} \Big)
 \\
&  \leq \|g\|_{C^2}|t-s|   \Big(  \|V-V^n\|_{ L_q(\bar\Omega; C^\beta([0,1])) } 
 \\
 & \quad + \big(  \sup_{t \in [0, 1]} \|(\bar{\mathcal{X}}^n_t-\bar{\mathcal{X}}_t)  \|_{L_{q\bar q/(q-\bar q)}(\bar \Omega)}      + \sup_{t \in [0, 1]} \|\bar X^n_t-\bar{\mathcal{X}}^n_t  \|_{L_{q\bar q/(q-\bar q)}(\bar \Omega)}      \big) \| V\|_{C^\beta([0,1]; L_q(\bar\Omega) )} \Big).
 \end{equs}
 From  $(i)$ and $(ii)$ of our assumptions combined with  \cite[Theorem 2.1]{ButDarGEr}, 
we conclude that  for $g \in C^\infty(\R^d)$
 \begin{equs}
 \lim_{n \to \infty} \| \mathcal{I}_n(g)- \mathcal{I}(g)\|_{C^\theta ([0, 1] ; L_{\bar q}( \bar \Omega))} =0. 
 \end{equs}
 Since, $f \in C^{\alpha+}(\R^d)$,  for any $\delta >0$,  we can find $\tilde{f} \
 \in C^\infty(\R^d)$ such that $\| f -\tilde{f} \|_{C^{\alpha}(\R^d)}\leq \delta$.  Then, we see that 
 \begin{equs}
  &\| \mathcal{I}_n(f)- \mathcal{I}(f)\|_{C^\theta ([0, 1] ; L_{\bar q}( \bar \Omega))} 
  \\& \leq  \| \mathcal{I}_n(f)- \mathcal{I}_n(\tilde{f})\|_{C^\theta ([0, 1] ; L_{\bar q}( \bar \Omega))} +  \| \mathcal{I}_n(\tilde{f})- \mathcal{I}(\tilde{f})\|_{C^\theta ([0, 1] ; L_{\bar q}( \bar \Omega))} 
  \\
 & \quad +   \| \mathcal{I}(f)- \mathcal{I}(\tilde{f})\|_{C^\theta ([0, 1] ; L_{\bar q}( \bar \Omega))} 
 \\
 &  \leq ( \| \mathcal{I}_n\| + \| \mathcal{I}\|) \| f -\tilde{f} \|_{C^{\alpha}(\R^d)}+ \| \mathcal{I}_n(\tilde{f})- \mathcal{I}(\tilde{f})\|_{C^\theta ([0, 1] ; L_{\bar q}( \bar \Omega))},
 \end{equs}
which  implies that $ \limsup_{n \to \infty}\| \mathcal{I}_n(f)- \mathcal{I}(f)\|_{C^\theta ([0, 1] ; L_{\bar q}( \bar \Omega))}  \lesssim \delta$, and since $\delta>0$ was arbitrary and $\theta> (1+\gamma)/2$,  the claim is proved. 
\end{proof}

The following proposition is well known and it follows by \cite[Theorem 5.1]{Jacod_Protter}, for example.

\begin{proposition}         \label{prop:convergence-BMs}
Let $(\Omega, \cF, \bP)$ be a probability space carrying an $\R^d$-valued Brownian motion $B$ and define $W^n$ as in \cref{thm:main_Holder}. There exists an extension $(\tilde{\Omega}, \tilde{\cF}, \tilde{\bP})$ of $(\Omega, \cF, \bP)$, carrying a $\R^{d \times d} $-valued standard Brownian motion $W$ independent of $B$, such that 
the laws of $(W^n,B)$ converge weakly to the law of  $(W,B)$ on $C([0, 1];\R^d\times\R^{d \times d})$.
\end{proposition}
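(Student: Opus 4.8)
The plan is to apply a functional (stable) central limit theorem for continuous martingales to the $\R^{d+d^2}$-valued process $M^n:=(B,W^n)$: it suffices to show that the predictable quadratic covariation of $M^n$ converges to a deterministic limit and that $W^n$ becomes asymptotically orthogonal to $B$; the limit is then a continuous Gaussian martingale with deterministic bracket, so the vanishing cross-bracket forces the $W$-part to be a standard Brownian motion independent of $B$. Each entry $(W^n)^{(k,i)}_t=\sqrt{2n}\int_0^t (B^{(k)}_r-B^{(k)}_{\kappa_n(r)})\,dB^{(i)}_r$ is a genuine continuous martingale with respect to the augmented natural filtration of $B$ (the integrand lies in $L_2([0,1]\times\Omega)$), so $M^n$ is a continuous martingale and the ``large jump'' conditions in the martingale CLT are vacuous; only the convergence of $\langle M^n\rangle$ needs verification.

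First I would compute the brackets. Clearly $\langle B^{(i)},B^{(j)}\rangle_t=\delta_{ij}t$. Next,
\[
\langle (W^n)^{(k,i)},B^{(\ell)}\rangle_t=\delta_{i\ell}\,\sqrt{2n}\int_0^t\big(B^{(k)}_r-B^{(k)}_{\kappa_n(r)}\big)\,dr,
\]
which has mean zero and, computing the covariance of the increments of $B$ within a single mesh interval, $L_2$-norm of order $n^{-1/2}$; by Doob's inequality it tends to $0$ uniformly in $t$. Finally,
\[
\langle (W^n)^{(k,i)},(W^n)^{(k',i')}\rangle_t=\delta_{ii'}\,2n\int_0^t\big(B^{(k)}_r-B^{(k)}_{\kappa_n(r)}\big)\big(B^{(k')}_r-B^{(k')}_{\kappa_n(r)}\big)\,dr,
\]
whose expectation equals $\delta_{ii'}\delta_{kk'}\,2n\int_0^t (r-\kappa_n(r))\,dr\to\delta_{ii'}\delta_{kk'}\,t$; I would then prove convergence in probability, uniformly in $t$, by subtracting the mean and bounding the $L_2$-norm of the centred part, which is $O(n^{-1/2})$ because the contributions of distinct mesh intervals are independent with zero mean and the variance inside each of the $O(n)$ intervals is $O(n^{-4})$, amplified by the factor $(2n)^2$; uniformity in $t$ then follows since both $t\mapsto\langle (W^n)^{(k,i)}\rangle_t$ and the limit are nondecreasing and the limit is continuous. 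Hence $\langle M^n\rangle_t\to t\,I$ in probability, uniformly on $[0,1]$.

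With these facts in hand, I would invoke the functional stable limit theorem for continuous martingales --- which is precisely \cite[Theorem 5.1]{Jacod_Protter} in this setting; see also the continuous martingale central limit theorem in Jacod--Shiryaev. It yields that $M^n$ converges stably in law to $(B,W)$, where $W$ lives, together with $B$, on an extension $(\tilde\Omega,\tilde\cF,\tilde\bP)$ of $(\Omega,\cF,\bP)$, and $(B,W)$ is a continuous Gaussian martingale with bracket $t\,I$; in particular $W$ is a standard $\R^{d\times d}$-valued Brownian motion and, since $\langle B,W\rangle\equiv 0$ and the pair is Gaussian, $W$ is independent of $B$. Weak convergence holds on the space of continuous paths because $W^n$ and the limit are continuous; the tightness required there follows from the uniform-in-$n$ bound $\E|W^n_t-W^n_s|^{2p}\lesssim|t-s|^p$ for $p\ge 1$, obtained from the Burkholder--Davis--Gundy inequality together with the estimate $\big\|2n\int_s^t(B^{(k)}_r-B^{(k)}_{\kappa_n(r)})^2\,dr\big\|_{L_p(\Omega)}\lesssim|t-s|$, followed by the Kolmogorov--Chentsov criterion.

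The only step that is not pure bookkeeping is the law-of-large-numbers convergence $\langle W^n\rangle_t\to t\,I$ in probability uniformly in $t$, which hinges on the block-independence of the Brownian increments; granting this and the cited functional limit theorem --- whose role is exactly to convert bracket convergence plus asymptotic orthogonality into the stated joint weak limit with $W$ Brownian and independent of $B$ --- the remainder is routine.
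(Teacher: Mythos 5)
Your proposal is correct and is essentially the paper's approach: the paper proves this proposition simply by declaring it well known and citing \cite[Theorem 5.1]{Jacod_Protter}, and your bracket computations (vanishing cross-bracket $\langle W^n,B\rangle$, $\langle W^n\rangle_t\to tI$) together with the continuous-martingale functional CLT are exactly the standard argument underlying that citation. Only cosmetic polish is needed: the cross-bracket $\sqrt{2n}\int_0^t(B^{(k)}_r-B^{(k)}_{\kappa_n(r)})\,dr$ is not itself a martingale, so apply Doob to the discrete martingale of per-interval contributions plus a negligible boundary term, and for the off-diagonal (non-monotone) bracket entries obtain uniformity by polarization or simply note that pointwise convergence in probability of the brackets suffices for the cited limit theorem.
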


\begin{proof}[Proof of Theorems \ref{thm:existence_uniqueness} and \ref{thm:main_Holder}]

The uniqueness part of \cref{thm:existence_uniqueness} follows from \cref{prop:weak_uniqueness}.
	Let us set $S^n=(X, V^n, B, W^n)$. We start by showing that any  subsequence $\{S^n\}_{n \in \mathbb{N}_1}  \subset  \{S^n\}_{n \in \mathbb{N}}$, there exists a further subsequence $\{S^n\}_{n \in \mathbb{N}_2}  \subset  \{S^n\}_{n \in \mathbb{N}_1}$ which converges weakly to a  solution of the system \eqref{sde}, \eqref{eq:V}. This will verify the existence part of \cref{thm:existence_uniqueness} and since $\{S^n\}_{n \in \mathbb{N}_1}$ is arbitrary, will also prove \cref{thm:main_Holder}.

	Let $ \{S_n\}_{n \in \mathbb{N}_1} $ be an arbitrary subsequence. Let us fix $\beta \in (0, 1/2)$ such that $(1+\alpha)/2+\beta>1$. Moreover, we fix  $\gamma, \gamma' \in (0, 1/2)$ such that  $1/2 >\gamma > \gamma'> \beta  $, and we chose $q>1$ large enough such that 
	\begin{equs}       \label{eq:embeddings}
	C^{\gamma}([0,1])  \subset \subset  W^{\gamma'}_q ([0, 1]) \subset \subset  C^\beta([0,1]).
	\end{equs}
	For any  $p \geq 1$, by Lemma \ref{lem:uniform-bounds}, we have that 
	\begin{equs} \label{eq:uniform-V-n}
	\sup_{n \in \mathbb{N}_1} \| V^n\|_{L_p(\Omega, C^\gamma([0,1]; \R^d))} < \infty.
	\end{equs} 
	Moreover,  by the boundedness of the coefficients and the  BDG inequality  one can easily see that $(X, X^n, B, W^n)$ is bounded in $L_p(\Omega; \mathcal{C}^\gamma) $, where 
 \begin{equs}
     \mathcal{C}^\gamma=  \big( C^\gamma([0, 1]; \R^d) \big)^3 \times C^\gamma([0, 1]; \R^{d\times d}).
 \end{equs}
	By the first compact embedding in \eqref{eq:embeddings}, we have that the laws of $\{(X, X^n, V^n, B, W^n)\}_{n \in \mathbb{N}_1} $ are tight on 
 \begin{equs}
 \mathcal{W}^{\gamma'}_q=\big(W^{\gamma'}_q ([0, 1]; \R^d ) \big)^4 \times W^{\gamma'}_q ([0, 1]; \R^{d \times d} ) .
 \end{equs}The latter  is a Polish space and, therefore, 
	by Prokorov's theorem, we have that the laws of a further subsequence  $\{(X, X^n, V^n, B, W^n)\}_{n \in \mathbb{N}_2}$ 
	converge weakly.  By Skorohod's representation theorem, there exists a probability space $( \hat \Omega, \hat \cF, \hat  \bP) $ and random variables 
	\begin{equs}
	(\hat{ \mathcal{X}}^{n},  \hat  X^n, \hat   V^n, \hat  B^n , \hat W^n)  : \hat  \Omega \to  \mathcal{W}^{\gamma'}_q
	\end{equs}
	for $n \in \mathbb{N}_2$, 
	and 
	\begin{equs}
	(\hat{ \mathcal{X}}, \hat  X,  \hat   V, \hat  B, \hat W)   : \hat \Omega \to  \mathcal{W}^{\gamma'}_q, 
	\end{equs}
	such that 
	\begin{equs}      \label{eq:same-distribution}
	(\hat{ \mathcal{X}}^{n},  \hat  X^n, \hat   V^n, \hat  B^n , \hat W^n) \overset{d}{=}  (X, X^n, V^n, B, W^n) \ \text{ on  \   $ \mathcal{W}^{\gamma'}_q$}, 
	\end{equs}
	and $\hat \bP$-a.s. 
	\begin{equs}            \label{eq:almost-sure-convergence}
	(\hat{ \mathcal{X}}^{n},  \hat  X^n, \hat   V^n, \hat  B^n , \hat W^n) \to (\hat{ \mathcal{X}}, \hat  X,  \hat   V, \hat  B, \hat W)   \ \text{ as \ $n \to \infty$},
	\end{equs}
	in $ \mathcal{W}^{\gamma'}_q$. 
	Let $\{\hat \cF^n_t\}_{t \geq0}$ be the augmentation of the filtration generated by $\hat{ \mathcal{X}}^n, \hat B^n$, and $\hat W^n$.  It is easy to check that $ \hat B^n$  is a $\{\hat \cF^n_t\}_{t \geq0}$-Brownian motion. 
	Notice that by \eqref{eq:same-distribution} and the continuity of the coefficients, the following equalities follow 
	\begin{equs}        
	d \hat{ \mathcal{X}}^n_t & = b( \hat{ \mathcal{X}}^n_t) \, dt + \sigma ( \hat{ \mathcal{X}}^n_t) \, d \hat B^n_t, \qquad X^{(n)}_0=x_0    \label{eq:idk1}
	\\
	d \hat X^n_t  & = b( \hat  X^n_{\kappa_n(t)} ) \, dt + \sigma ( \hat X^n_{\kappa_n(t)} ) \, d \hat B^n_t, \qquad X^{(n)}_0=x_0       \label{eq:idk2}
	\\
	\hat V^n &= \sqrt{n}(\hat{ \mathcal{X}}^n_t- \hat X^n_t )          \label{eq:idk3}
	\\
	\hat W^n_t &= \sqrt{2 n} \int_0^t (  \hat B^n_r -\hat B^n_{\kappa_n(r)} )  \, d \hat B_r.      \label{eq:idk4}
	\end{equs}
By \eqref{eq:same-distribution} and the fact that $ (X, X^n, V^n, B, W^n)$ are bounded in $L_p(\Omega;  \mathcal{W}^{\gamma'}_q)$  for any $p \geq 1$,  we get that 
	\begin{equs}           \label{eq:uniform_integrability}
	\sup_{n \in \mathbb{N}_2}\|(\hat{ \mathcal{X}}^{n},  \hat  X^n, \hat   V^n, \hat  B^n , \hat W^n)\|_{L_p(\hat \Omega;  \mathcal{W}^{\gamma'}_q)}< \infty,
	\end{equs}
	for any  $p\geq 1$. 
	The almost sure convergence from \eqref{eq:almost-sure-convergence} combined with the uniform integrability obtained by \eqref{eq:uniform_integrability} gives by Vitali's theorem that for all $p\geq 1$
	\begin{equs}           \label{eq:convergence-in-S'}
	\lim_{n \to 
	\infty}\|(\hat{ \mathcal{X}}^{n},  \hat  X^n, \hat   V^n, \hat  B^n , \hat W^n)- (\hat{ \mathcal{X}}, \hat  X,  \hat   V, \hat  B, \hat W)\|_{L_p(\hat \Omega;  \mathcal{W}^{\gamma'}_q)}=0.
	\end{equs}
	We claim that $\hat{ \mathcal{X}} =\hat{X}.$
	Indeed,  by \eqref{eq:convergence-in-S'} we have that $\hat{ \mathcal{X}}^n \to \hat{ \mathcal{X}}$ and $\hat{X}^n \to \hat{X}$ while  \eqref{eq:idk1}-\eqref{eq:idk2} and \cite[Theorem 2.7]{ButDarGEr} imply that $\hat{ \mathcal{X}}^n -\hat{X}^n \to 0$, with all three limits understood,  for example,  in $C([0,1];\R^d)$ in probability (recall the second embedding in \eqref{eq:embeddings}). 
 
 Next,  let $\hat{\mathbb{F}}:= \{\hat \cF_t\}_{t \in [0, 1]}$ be the augmentation of the filtration generated by $\hat{X}, \hat{V},  \hat B, \hat W$.   We claim that 
	$\{ ( \hat \Omega,  \hat  \cF,   \hat{\mathbb{F}}, \hat \bP), \hat{ X}, \hat V, \hat B, \hat W \}$ is a solution of the system  \eqref{sde}, \eqref{eq:V}. 
 First of all,  by using \eqref{eq:same-distribution}, \eqref{eq:almost-sure-convergence}, and  \cref{prop:convergence-BMs},  it follows that $ \hat B$ and $\hat W$ are independent Brownian motions, that $\hat B$ is an $\hat{\mathbb{F}}$-Brownian motion, and that $\hat W$ is an $\hat{\mathbb{F}}$-martingale. This is enough to conclude that $\hat W$ is also an $\hat{\mathbb{F}}$-Brownian motion.
 By the convergence in \eqref{eq:almost-sure-convergence} combined with the continuity of the coefficients and \cref{lem:convergence_stochastic_integrals}, we can let $n \to \infty$ in \eqref{eq:idk1} to obtain that  $\{(\hat{\Omega}, \hat{\cF}, \hat{\mathbb{F}}, \hat{\bP}), \hat{ \mathcal{X}}, \hat B\}$ is a solution of \eqref{sde}. Moreover, since pathwise uniqueness holds for \eqref{sde}, it follows that $\mathcal{X}$ is adapted to the augmentation of the filtration generated by $\hat{B}$. In particular, this shows that $( \mathcal{X}, \hat B) \indep \hat{W}$, since $ \hat B \indep \hat{W}$ .
	Consequently, \eqref{item:def_sol_1} and \eqref{item:def_sol_2} from  \cref{def:solution_of_system} are satisfied. Moreover, \eqref{item:def_sol_3} follows from the fact that $\hat V \in W^{\gamma'}_q ([0, 1]; \R^d)$  and the second embedding in \eqref{eq:embeddings}. Finally, we focus on showing \eqref{item:def_sol_4}, that is, on showing that $\hat V=(\hat V^{(1)},\ldots,\hat V^{(d)})$ satisfies
	\begin{equs}
			d \hat V_t=    \hat V _t d ( Q^Xb)_t + \nabla \sigma( \hat X_r) \hat V_rd \hat B_r+\frac1{\sqrt2}\sigma \nabla \sigma ( \hat X_r)d  \hat W_r.
			 \label{eq:after_limit}
		\end{equs}
	From \eqref{eq:idk1}-\eqref{eq:idk2},  we have  
	\begin{equs}
	 \hat V^n_t&  =  \sqrt{n} \int_0^t \big(  b (\hat{ \mathcal{X}}^{n}_s)- b ( \hat  X^n_{\kappa_n(s)}) \big) \, ds
	 + \sqrt{n} \int_0^t \big(  \sigma (\hat{ \mathcal{X}}^{n}_s)- \sigma ( \hat  X^n_{\kappa_n(s)}) \big) \, d \hat B^n_s                    \label{eq:before_limit}
	\\
	& =I^n_t+E^n_t+\bar I^n_t+\bar E^n_t,  
	\end{equs}
	where 
			\begin{equs}
				I^n_t & =\int_0^t \sqrt n(b( \hat{ \mathcal{X}}^n_r)-b (\hat X^n_r))dr,  
				\\
				 E^n_t & =\int_0^t\sqrt n(  b(\hat X^n_r)-b (\hat X^n_{\kappa_n(r)}))dr,
				\\
				\bar I^n_t & =  \int_0^t\sqrt n (\sigma( \hat{ \mathcal{X}}^n_r)-\sigma(\hat X^n_r))d\hat B^n_r, 
				\\
				 \bar E^{n}_t& =\int_0^t\sqrt n(\sigma (\hat X^n_r)-\sigma (\hat X^n_{\kappa_n(r)}))d \hat B^n_r.
			\end{equs}
	By \eqref{eq:convergence-in-S'} and \eqref{eq:embeddings}, we see that the conditions of \cref{lem:limit-singlar-term} are satisfied, and \cref{lem:limit-singlar-term} combined with  \cref{lem:interchange-LpCa}  implies that
	\begin{equs}        \label{eq:conv_In}
	\big\| I^n- \int_0^ \cdot \hat V_r \,  d (Q^{\hat X}b)_r \big\|_{C^\beta([0,1])}  \xrightarrow{\hat{\bP}} 0, 
	\end{equs}
	as $n \to \infty$. 	Here and below 	$\xrightarrow{\hat{\bP}}$ denotes convergence in probability.
			In addition,  by \cref{lem:quad1} and \cref{lem:interchange-LpCa} we have  
	\begin{equs}		  \label{eq:conv_En}
	 \|E^{n}\|_{C^\beta([0,1])} \xrightarrow{\hat{\bP}} 0,  
	\end{equs}
	as $n \to \infty$.
	Moving to $\bar I^n$,  by the fundamental theorem of calculus we have that 
	\begin{equs}
	 \bar I^n_t &= \int_0^t\sqrt n (\sigma ( \hat{ \mathcal{X}}^n_r)-\sigma (\hat X^n_r))d\hat B^n_r
	 \\
	 &=  \int_0^t    \hat{V}^n_r \int_0^1 \nabla\sigma( \hat{ \mathcal{X}}^n_r+ \vartheta( \hat X^n_r-\hat{ \mathcal{X}}^n_r)) \, d \vartheta \,d\hat B^n_r.
	 \end{equs} 
	 By \eqref{eq:convergence-in-S'} and the fact that $
	 \sigma \in C^2$,   we get that 
	 \begin{equs}
	 \big\| \hat{V}^n\int_0^1 \nabla \sigma ( \hat{ \mathcal{X}}^n + \vartheta( \hat X^n-\hat{ \mathcal{X}}^n)) \, d \vartheta- \hat V \nabla \sigma (\hat {X})\big\|_{C([0,1])}  \xrightarrow{\hat{\bP}} 0,
	\end{equs}  
	and
	\begin{equs}
	\|\hat B^n-\hat B\|_{C([0,1])}  \xrightarrow{\hat{\bP}} 0,
	\end{equs}
	as $n \to \infty$.  Consequently, by  \cref{lem:convergence_stochastic_integrals}, we get that 
	\begin{equs}          \label{eq:conv_bar_In}
	\big\|\bar I^n - \int_0^ \cdot  \hat V_r \nabla  \sigma(\hat {X}_r ) \, d \hat{B}_r\|_{C([0,1])}  \xrightarrow{\hat{\bP}} 0,
	\end{equs}
	as $n \to \infty$. Finally, we deal with $\bar E^n$. By the fundamental theorem of calculus again, we have 
	\begin{equs}
	\bar E^n_t & =  \int_0^t \sqrt{n} ( \hat{X}^n_r-  \hat{X}^n_{\kappa_n(r)} ) \int_0^1 \nabla \sigma\big( \hat{X}^n_r+ \vartheta ( \hat{X}^n_{\kappa_n(r)}- \hat{X}^n_r) \big) \, d \vartheta \, d \hat B^n_r
	\\
	& =   \int_0^t \sqrt{n} (r-\kappa_n(r)) b( \hat{X}^n_{\kappa_n(r)})  \int_0^1 \nabla \sigma\big( \hat{X}^n_r+ \vartheta ( \hat{X}^n_{\kappa_n(r)}- \hat{X}^n_r) \big) \, d \vartheta \, d \hat B^n_r
	\\
	&\qquad+ \frac{1}{\sqrt{2}}  \int_0^t \sigma( \hat{X}^n_{\kappa_n(r)})  \int_0^1 \nabla \sigma\big( \hat{X}^n_r+ \vartheta ( \hat{X}^n_{\kappa_n(r)}- \hat{X}^n_r) \big) \, d \vartheta \, d \hat{W}^n_r
	\\
	&=: \bar E^{n, 1}_t + \bar E^{n, 2}_t.
	\end{equs}
	Since $(r-\kappa_n(r))$ is of order $n^{-1}$ and since $b$ and $\nabla \sigma$ are bounded,  it follows by Davis' inequality that  $\|\bar E^{n, 1} \|_{C([0,1])}  \xrightarrow{\hat{\bP}} 0$. Next, by \eqref{eq:convergence-in-S'} it follows that we have 
	\begin{equs}
	\sup_{r \in [0,1]}|\sigma( \hat{X}^n_{\kappa_n(r)})  \int_0^1 \nabla \sigma\big( \hat{X}^n_r+ \vartheta ( \hat{X}^n_{\kappa_n(r)}- \hat{X}^n_r) \big) \, d \vartheta -
	\sigma(\hat{X}_r)  \nabla \sigma( \hat{X}_r)|  \xrightarrow{\hat{\bP}} 0,
	\end{equs}
	and 
	\begin{equs}
	\|\hat W^n-\hat W
 \|_{C([0,1])}  \xrightarrow{\hat{\bP}} 0,
	\end{equs}
	as $n \to \infty$.
	Consequently, by \cref{lem:convergence_stochastic_integrals}, we get that 
	\begin{equs}          \label{eq:conv_bar_En}
	\big\|\bar E^n -\frac{1}{\sqrt{2}}\int_0^ \cdot  \sigma (\hat{X}_r)  \nabla \sigma ( \hat{X}_r)\, d \hat{W}_r\|_{C([0,1])}  \xrightarrow{\hat{\bP}} 0,
	\end{equs}
	as $n \to \infty$.
	Recall that almost surely $\|\hat{V}^n-\hat{V}\|_{C([0,1])}\to 0$,  which combined with \eqref{eq:conv_In}-\eqref{eq:conv_bar_En}, upon letting $n \to \infty$ in \eqref{eq:before_limit} leads to \eqref{eq:after_limit}. Hence, this shows that \eqref{item:def_sol_4} from \cref{def:solution_of_system} is also satisfied. This finishes the proof.

\end{proof}

\section{Proof of main results for Sobolev $b$}     \label{sec:proofs_Sobolev} 
\subsection{Tightness}
In this section we focus on the following tightness result.
\label{sub:tightness}
	\begin{theorem}\label{prop.tight.sobolev}
		Let Assumptions \ref{asn:sigma} and \ref{asn:Sobolev} hold.  For any $p\geq1$ and $\gamma \in (0, 1)$ we have 
  \begin{equs}
      \sup_{n\in\N} \|  V^n \|_{C^{\gamma/2}([0, 1]; L_p(\Omega))} < \infty.
  \end{equs}
 In particular, for any $\gamma \in (0, 1)$, the laws of $\{V^n\}_{n \in \mathbb{N}}$ are tight on $C^{\gamma/2}([0, 1])$.
	\end{theorem}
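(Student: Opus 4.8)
The plan is to establish the quantitative bound $\sup_{n}\|V^n\|_{C^{\gamma/2}([0,1];L_p(\Omega))}<\infty$ for every $p\ge 2$ and $\gamma\in(0,1)$; the tightness on $C^{\gamma/2}([0,1])$ then follows for each $\gamma<1$ by applying this bound with a slightly larger H\"older exponent, upgrading to a pathwise modulus of continuity through Kolmogorov's criterion (\cref{lem:interchange-LpCa}), and using the compact embedding of H\"older spaces on the compact interval $[0,1]$. This is the Sobolev counterpart of \cref{lem:uniform-bounds}. Write $Y^n:=X-X^n$, so that $V^n=\sqrt n\,Y^n$, fix $0\le S\le s<t\le T\le 1$, and decompose, exactly as in the proof of \cref{lem:uniform-bounds},
\begin{equs}
Y^n_t-Y^n_s&=\int_s^t\big(b(X_r)-b(X^n_r)\big)\,dr+\int_s^t\big(b(X^n_r)-b(X^n_{\kappa_n(r)})\big)\,dr\\
&\quad+\int_s^t\big(\sigma(X_r)-\sigma(X^n_r)\big)\,dB_r+\int_s^t\big(\sigma(X^n_r)-\sigma(X^n_{\kappa_n(r)})\big)\,dB_r.
\end{equs}

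The second, third and fourth terms are treated as in \cref{lem:uniform-bounds}, relying on the fact that both $X$ and $X^n$ have densities comparable to the Gaussian heat kernel (cf.\ \cite{GyK}). For the second term I would apply \cref{lem:quad1} with the constant weight $g\equiv1$ and $f=b^{(\ell)}\in C^0(\R^d)\cap W^{\theta}_m(\R^d)$ for a suitable $\theta\in(0,1)$ (recall that $b$ is bounded), obtaining $\big\|\int_s^t(b(X^n_r)-b(X^n_{\kappa_n(r)}))\,dr\big\|_{L_p(\Omega)}\lesssim n^{-(1+\eps)/2}|t-s|^{1/2}$ for some $\eps>0$, which after multiplication by $\sqrt n$ is negligible. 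For the fourth term, Lipschitzness of $\sigma$ and the one-step bound give $\|\sigma(X^n_r)-\sigma(X^n_{\kappa_n(r)})\|_{L_p(\Omega)}\lesssim n^{-1/2}$, so the Burkholder--Davis--Gundy inequality yields $\lesssim n^{-1/2}|t-s|^{1/2}$. For the third term, Burkholder--Davis--Gundy together with Lipschitzness of $\sigma$ gives $\lesssim|t-s|^{1/2}\sup_{r\in[s,t]}\|Y^n_r\|_{L_p(\Omega)}\lesssim|t-s|^{1/2}\big(\|Y^n_S\|_{L_p(\Omega)}+|T-S|^{\gamma/2}\|Y^n\|_{C^{\gamma/2}([S,T];L_p(\Omega))}\big)$.

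The main obstacle is the first term $\int_s^t(b(X_r)-b(X^n_r))\,dr$. Since $b$ is merely bounded and of positive Sobolev regularity, the crude estimate $2\|b\|_{C^0}|t-s|$ is useless after rescaling by $\sqrt n$, and one must again exploit the smoothing of the non-degenerate noise. Here I would invoke the stochastic-sewing stability estimate for SDEs with Sobolev drift --- the Sobolev analogue of \cite[Lemma 6.4]{ButDarGEr}, see also \cite{DGL} --- which, using the heat-kernel-type bounds on the densities of $X$ and $X^n$, produces a bound of the form
\begin{equs}
\Big\|\int_s^t\big(b(X_r)-b(X^n_r)\big)\,dr\Big\|_{L_p(\Omega)}\lesssim|t-s|^{1/2}\Big(|T-S|^{\delta}\,\|Y^n\|_{C^{\gamma/2}([S,T];L_p(\Omega))}+\|Y^n_S\|_{L_p(\Omega)}\Big)
\end{equs}
for some $\delta>0$, with constant independent of $n,S,T$, and crucially \emph{linear} in $Y^n$; this is the technical heart of the argument.

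Combining the four bounds gives $\|Y^n_t-Y^n_s\|_{L_p(\Omega)}\lesssim|t-s|^{1/2}\big((|T-S|^{\delta}+|T-S|^{\gamma/2})\|Y^n\|_{C^{\gamma/2}([S,T];L_p(\Omega))}+\|Y^n_S\|_{L_p(\Omega)}+n^{-1/2}\big)$. Dividing by $|t-s|^{\gamma/2}$ (so that $|t-s|^{(1-\gamma)/2}\le1$), taking the supremum over $s<t$ in $[S,T]$, and choosing $|T-S|\le h_0$ small enough to absorb the $\|Y^n\|_{C^{\gamma/2}([S,T];L_p(\Omega))}$ contribution into the left-hand side, I obtain $\|Y^n\|_{C^{\gamma/2}([S,T];L_p(\Omega))}\lesssim\|Y^n_S\|_{L_p(\Omega)}+n^{-1/2}$ on every subinterval of length at most $h_0$. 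Partitioning $[0,1]$ into finitely many such intervals $[S_k,S_{k+1}]$, using $Y^n_0=0$, and iterating --- first to get $\|Y^n_{S_k}\|_{L_p(\Omega)}\lesssim n^{-1/2}$ by induction on $k$, then to patch the seminorms over the subintervals --- I conclude $\|Y^n\|_{C^{\gamma/2}([0,1];L_p(\Omega))}\lesssim n^{-1/2}$, that is, $\sup_n\|V^n\|_{C^{\gamma/2}([0,1];L_p(\Omega))}<\infty$. The tightness statement then follows as indicated at the outset.
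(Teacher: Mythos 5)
Your decomposition of $Y^n_t-Y^n_s$ and the treatment of the second, third and fourth terms are fine (for the quadrature term, \cref{lem:quad1} with $f=b\in C^0\cap W^\theta_m$ indeed applies). The genuine gap is the first term: you declare as "the technical heart" a bound
\begin{equation*}
\Big\|\int_s^t\big(b(X_r)-b(X^n_r)\big)\,dr\Big\|_{L_p(\Omega)}\lesssim|t-s|^{1/2}\Big(|T-S|^{\delta}\,\|Y^n\|_{C^{\gamma/2}([S,T];L_p(\Omega))}+\|Y^n_S\|_{L_p(\Omega)}\Big),
\end{equation*}
i.e.\ a "Sobolev analogue of \cite[Lemma 6.4]{ButDarGEr}", but you neither prove it nor can you cite it: that lemma relies on the pointwise H\"older bound $|\mathcal{P}_u b(x)-\mathcal{P}_u b(y)|\lesssim \|b\|_{C^\alpha}|x-y|\,u^{(\alpha-1)/2}$, which is unavailable when $b$ is only bounded and in $W^\alpha_m$ with finite $m$ (the corresponding smoothing estimates are $L_m$-based, so inserting them into a stochastic-sewing germ requires Krylov/density estimates and does not yield a bound that is linear in $\|Y^n\|_{C^{\gamma/2}([S,T];L_p)}$ with a small prefactor on short intervals). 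The known stability results for Sobolev drift (e.g.\ the arguments of \cite{le2021taming} invoked in this paper) produce Gronwall-type bounds involving exponential moments of maximal-function functionals such as $A^n_1$ in \eqref{def.Ant}, not the clean linear estimate your absorption step needs. So the one step that carries all the difficulty is exactly the step you have not supplied, and establishing it would essentially amount to redoing the paper's argument.

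The paper avoids this term altogether: it applies It\^o's formula with the solution $u$ of the PDE \eqref{eqn.pde} (Zvonkin transform) to rewrite $X_t-X^n_t$ as $\sum_{k=0}^6 J^k_t$ in \eqref{eqn.transformSDE}, in which no difference $b(X)-b(X^n)$ appears; the irregular drift only enters through the quadrature terms $J^2,J^3$ (handled by \cref{lem:quad1} and \cref{lem:(ii)}, using \eqref{eq:u_in_LqLp}) and through $\nabla u,\nabla^2u$, which are controlled by \eqref{tmp.2709regu}, \eqref{eq:Holder_grad_u}, the maximal-function inequality \eqref{eq:Lip_Max} and the Krylov-type estimate of \cref{lem:Krylov_est_Xn}. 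The absorption is then done by taking $\theta$ large (the factor $\theta^{-1/2}$ in the $J^0$ bound), not by shrinking the time interval, and the input $\|Y^n_S\|_{L_p}\lesssim n^{-1/2}$ that you try to generate by iteration is instead provided by \cref{prop.weightedmoment}, whose proof again goes through the transformed equation together with a stochastic Gronwall lemma. If you want to salvage your route, you would have to prove your stability estimate for Sobolev drift from scratch; the natural way to do so is precisely the transformation the paper uses.
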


Before the proof of the above theorem, we need some auxiliary results. Let $u$ be the solution of \eqref{eqn.pde}, for a $\theta>0$ to be determined later on.
By applying It\^o's formula for $u(t, X_t)$, we obtain that
		\begin{equs}\label{eq-bX}
		\int_0^t b(X_r) d r = u(0,X_0)-u(t, X_t) + \theta \int_0^t u(r, X_r) d r
		 + \int_0^t (\partial_i u \sigma^{(i,j)})(r, X_r) d B^{(j)}_r,
		\end{equs}
		and similarly,
		\begin{equs}\label{eq-bXn}
		\int_0^t b(X^n_r) d r &= u(0,X^n_0)-u(t, X^n_t) + \theta \int_0^t u(r, X^n_r) d r
		\nonumber\\&\quad+\int_0^t \partial_i u(r, X_r^n) [b^{(i)}( X_{\kappa_n(r)}^n)-b^{(i)}(X^n_r)]  d r
		\nonumber\\&\quad+\int_0^t \partial_{ij}^2 u(r,X^n_r)[a^{(i,j)}(X^n_{\kappa_n(r)})-a^{(i,j)}(X^n_r)]dr
		\nonumber\\&\quad+\int_0^t \partial_i u(r,X^n_r)\sigma^{(i,j)}(X^n_{\kappa_n(r)})dB^{(j)}_r.
		\end{equs}
		From equations \eqref{sde} and \eqref{EM}, we have
		\begin{equs}
			X_t-X^n_t
			&=x_0-x_0^n
			+\int_0^t[b(X_r)-b(X^n_r)]dr
			+\int_0^t[b(X^n_r)-b(X^n_{\kappa_n(r)})]dr
			\\&\quad+\int_0^t[\sigma(X_r)-\sigma(X^n_r)]dB_r
			+\int_0^t[\sigma(X^n_r)-\sigma(X^n_{\kappa_n(r)})]dB_r.
		\end{equs}
		We plug \eqref{eq-bX} and \eqref{eq-bXn} into the identity above  to obtain that
		\begin{equation}\label{eqn.transformSDE}
				X_t-X^n_t
				=x_0-x^n_0+\sum_{k=0}^6  J^k_{t}
		\end{equation}
		where for $\ell \in \{1, \ldots, d\}$
		\begin{equs}
  \begin{aligned}   \label{eq:def_J}
			J^{0,(\ell)}_t &=u^{(\ell)}(t,X_t)-u^{(\ell)}(t,X^n_t)
			\\J^{1,(\ell)}_t&=\theta\int_0^t (u^{(\ell)}(r,X_r)-u^{(\ell)}(r,X^n_r))dr
			\\J^{2,(\ell)}_t&=\int_0^t[I^{(\ell,i)}+\partial_i u^{(\ell)}(r,X^n_r)][b^{(i)}(X^n_r)-b^{(i)}(X^n_{\kappa_n(r)})]dr
			\\J^{3,(\ell)}_t&=\int_0^t \partial_{i j}^2 u^{(\ell)}(r,X^n_r)[a^{(i,j)}(X^n_r)-a^{(i,j)}(X^n_{\kappa_n(r)})]dr
			\\J^{4,(\ell)}_t&=\int_0^t[I^{(\ell,i)}+\partial_i u^{(\ell)}(r,X^n_r)][\sigma^{(i,j)}(X_r)-\sigma^{(i,j)}(X^n_r)]dB^{(j)}_r
			\\J^{5,(\ell)}_t&=\int_0^t[I^{(\ell,i)}+\partial_i u^{(\ell)}(r,X^n_r)][\sigma^{(i,j)}(X^n_r)-\sigma^{(i,j)}(X^n_{\kappa_n(r)})]dB^{(j)}_r
			\\J^{6,(\ell)}_t&=\int_0^t[\partial_i u^{(\ell)}(r,X_r)-\partial_i u^{(\ell)}(r,X^n_r)] \sigma^{(i,j)}(X_r)dB^{(j)}_r.
   \end{aligned}
		\end{equs}
    Finally, let us introduce the process
	\begin{equation}\label{def.Ant}
		A_t^n:= t+\int_0^t \left[ \cmm|\nabla^2 u|(s,X_s)+\cmm|\nabla^2 u|(s,X^n_s)\right]^2 d s.
	\end{equation}
where $\mathcal{M}$ is the Hardy--Littlewood maximal operator (in the spatial variable) defined as  
	\begin{equs}
		\cmm f(x):=\sup_{0<r<\infty}\frac{1}{|\cB_r|}\int_{\cB_r}f(x+y)dy, \quad \cB_r:=\{x\in\mathbb{R}^d:|x|<r\},\quad r>0.
	\end{equs}
It is well known that there exists a constant $N$ depedning only on $d$  such that for all $f \in W^1_{1, loc}(\R^d)$, for almost every $x, y \in \R^d$, we have
\begin{equs}        \label{eq:Lip_Max}
    |f(x)-f(y)| \leq N |x-y|\big( \mathcal{M}|\nabla f| (x) +  \mathcal{M}|\nabla f| (y) \big), 
\end{equs}
and that for all $f \in L_p(\R^d)$, $p \in (1, \infty)$, we have 
\begin{equs}   \label{eq:Hardy-Littlewood} 
    \|  \mathcal{M}f \|_{L_p(\R^d)} \leq N \|f \|_{L_p(
    R^d)}. 
\end{equs}
 The following is a straightforward consequence of the  boundedness of the coefficients and the BDG inequality. 
	\begin{lemma}\label{lem.Xtkt}
		Let $X^n$ be defined by  \eqref{EM} and assume that $b$ and $\sigma$ are bounded. Then for every $p \in [1, \infty) $, there exists a constant $N=N(p, \|b\|_{C^0}, \| \sigma\|_{C^2})$, such that for all $n \in \mathbb{N}$ we have 
		\begin{equs}
			\sup_{t\in[0,1]}\|X^n_{t}-X^n_{\kappa_n(t)}\|_{L_p(\Omega)}\leq N n^{-1/2}.
		\end{equs}
	\end{lemma}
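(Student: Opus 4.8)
The plan is to establish the bound directly from the defining equation \eqref{EM} by isolating the increment over a single interval and applying standard moment estimates. First I would observe that for any $t \in [0,1]$, writing $\kappa_n(t) = \floor{nt}/n$, equation \eqref{EM} yields
\begin{equs}
X^n_t - X^n_{\kappa_n(t)} = \int_{\kappa_n(t)}^t b(X^n_{\kappa_n(r)}) \, dr + \int_{\kappa_n(t)}^t \sigma(X^n_{\kappa_n(r)}) \, dB_r.
\end{equs}
Since $\kappa_n(r) = \kappa_n(t)$ for all $r \in [\kappa_n(t), t]$, both integrands are in fact constant (equal to $b(X^n_{\kappa_n(t)})$ and $\sigma(X^n_{\kappa_n(t)})$ respectively), which simplifies the analysis but is not essential.

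Next I would estimate the two terms separately in $L_p(\Omega)$. For the drift term, since $\|b\|_{C^0} < \infty$ and $t - \kappa_n(t) \le 1/n$, the triangle inequality gives $\|\int_{\kappa_n(t)}^t b(X^n_{\kappa_n(r)}) \, dr\|_{L_p(\Omega)} \le \|b\|_{C^0}/n \le \|b\|_{C^0} n^{-1/2}$ (using $n \ge 1$). For the stochastic integral, the Burkholder--Davis--Gundy inequality bounds its $L_p(\Omega)$ norm by a dimensional constant times $\big\| \big(\int_{\kappa_n(t)}^t |\sigma(X^n_{\kappa_n(r)})|^2 \, dr\big)^{1/2}\big\|_{L_p(\Omega)}$, which is at most $N_p \|\sigma\|_{C^0} (t - \kappa_n(t))^{1/2} \le N_p \|\sigma\|_{C^0} n^{-1/2}$. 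Combining the two bounds and taking the supremum over $t \in [0,1]$ gives the claim with $N = N(p, \|b\|_{C^0}, \|\sigma\|_{C^2})$; note that $\|\sigma\|_{C^0} \le \|\sigma\|_{C^2}$, so the stated dependence holds.

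There is no real obstacle here — the statement is genuinely routine and the excerpt itself flags it as ``a straightforward consequence of the boundedness of the coefficients and the BDG inequality.'' The only point requiring a word of care is the uniformity in $n$, which is automatic since every estimate above is uniform in $t$ and depends on $n$ only through the factor $(t-\kappa_n(t))^{1/2} \le n^{-1/2}$; in particular the constant $N$ does not depend on $n$. One could also phrase the argument without BDG by directly computing the second moment when $p=2$ and then invoking a Gaussian-type hypercontractivity or Minkowski argument for general $p$, but the BDG route is the cleanest.
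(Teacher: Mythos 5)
Your argument is correct and is exactly the route the paper indicates: the lemma is stated there as ``a straightforward consequence of the boundedness of the coefficients and the BDG inequality,'' which is precisely your decomposition into the drift increment bounded by $\|b\|_{C^0}/n$ and the stochastic integral bounded via BDG by a constant times $\|\sigma\|_{C^0}\, n^{-1/2}$. Nothing further is needed.
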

 For the proof of \cref{prop.weightedmoment} below, we will need the following Krylov-type estimate.
 \begin{lemma}               \label{lem:Krylov_est_Xn}
     Suppose that Assumption \ref{asn:sigma} holds and $b \in C^0(\R^d; \R^d)$ and let $p>(d+2)/2$. Let $X^n$ be defined by \eqref{EM}.  There exists a constant $N=(\| b\|_{C^0}, \|\sigma\|_{C^2}, \lambda, d,p)$ such that for all $g \in L_p((0, 1) \times \R^d)$ and all $(s, t) \in [0, 1]_{\leq}$ we have 
     \begin{equs}
         \E \int_s^t g(r,X^n_r) \, dr \leq N (t-s)^{1-(d+2)/(2p)} \|g\|_{L_p((0, 1) \times \R^d)}.
     \end{equs}
 \end{lemma}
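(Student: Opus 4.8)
The statement is a Krylov-type estimate for the Euler scheme $X^n$ defined by \eqref{EM}, and the natural route is to transfer a known Krylov estimate for the non-degenerate diffusion $\bar X^n$ corresponding to $b\equiv 0$ (whose transition density is comparable to the Gaussian heat kernel) to the scheme with drift via Girsanov's transform. First I would reduce to the case $b\equiv0$: since $b\in C^0(\R^d;\R^d)$ is bounded and $\sigma$ is non-degenerate and $C^2$, Novikov's condition is satisfied and the law of $(X^n_r)_{r\in[0,1]}$ is equivalent to the law of $(\bar X^n_r)_{r\in[0,1]}$ with a Radon--Nikodym density whose moments are bounded by a constant $N=N(\|b\|_{C^0},\|\sigma\|_{C^2},\lambda,d)$; this is precisely the content of \cite[Corollary 3.2]{DGL}, already invoked in the proof of \cref{lem:quad1}. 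Applying Hölder's inequality in $\omega$, it therefore suffices to prove the claim for $\bar X^n$ with a possibly different exponent, so without loss of generality $b\equiv 0$.

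For the driftless scheme, the key input is that the one-dimensional marginal density of $\bar X^n_r$ is dominated by a Gaussian kernel: there is a constant $N$ (depending only on $\|\sigma\|_{C^2},\lambda,d$) such that $\bar X^n_r$ has a density $\rho^n_r$ with $\rho^n_r(x)\le N p_{Nr}(x)$ uniformly in $n$ and $r\in(0,1]$; this follows from standard Gaussian upper bounds for Euler schemes of non-degenerate SDEs (e.g. \cite{GyK} Theorem 4.2, as used in \cref{lem:(ii)}). Consequently, for $g\ge0$,
\begin{equs}
\E\int_s^t g(r,\bar X^n_r)\,dr \le N\int_s^t\!\!\int_{\R^d} g(r,x)\,p_{Nr}(x)\,dx\,dr
\le N\int_s^t \|g(r,\cdot)\|_{L_p(\R^d)}\,\|p_{Nr}\|_{L_{p'}(\R^d)}\,dr,
\end{equs}
by Hölder's inequality in $x$ with $1/p+1/p'=1$. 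Since $\|p_{Nr}\|_{L_{p'}(\R^d)}\lesssim r^{-\frac{d}{2}(1-1/p')}=r^{-\frac{d}{2p}}$, a further application of Hölder's inequality in $r$ gives
\begin{equs}
\E\int_s^t g(r,\bar X^n_r)\,dr \lesssim \Big(\int_s^t r^{-\frac{d}{2p}\cdot p'}\,dr\Big)^{1/p'}\|g\|_{L_p((0,1)\times\R^d)}.
\end{equs}
The exponent $\tfrac{d}{2p}p'=\tfrac{d p}{2(p-1)}$ is strictly less than $1$ precisely when $p>\tfrac{d+2}{2}$, which is the hypothesis; evaluating the $r$-integral over $[s,t]$ then yields the bound $(t-s)^{1-(d+2)/(2p)}\|g\|_{L_p}$ up to a constant, after a routine (and slightly wasteful) estimate $\int_s^t r^{-\frac{dp}{2(p-1)}}dr\lesssim (t-s)^{1-\frac{dp}{2(p-1)}}$; alternatively one keeps the sharper constant by the identity $1-\tfrac{1}{p'}\cdot\tfrac{dp}{2(p-1)}=1-\tfrac{d+2}{2p}$ after a short computation. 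Finally I would extend from $g\ge0$ to general $g\in L_p$ by splitting into positive and negative parts.

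\textbf{Main obstacle.} The only genuinely substantive point is the uniform-in-$n$ Gaussian upper bound for the density of $\bar X^n_r$ down to the diagonal; once that is quoted from the literature, everything else is Hölder's inequality and an elementary integral. I would therefore be careful to cite a reference that gives the bound with the Gaussian decay (not merely $L_p$-boundedness of the density) and uniformly in $n$ and $r\in(0,1]$, since both features are used above; \cite{GyK} and the discussion around \cref{lem:(ii)} provide this. A minor secondary point is bookkeeping of the exponent loss in $\omega$ incurred by the Girsanov step, but since the final estimate is stated with an unspecified constant $N$ this costs nothing.
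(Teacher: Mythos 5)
Your route is essentially the paper's: reduce to the driftless Euler scheme $\bar X^n$ via Girsanov, use a heat-kernel-type bound on the marginal of $\bar X^n_r$ (uniform in $n$), and finish with H\"older's inequality in $x$ and in $r$; the threshold $p>(d+2)/2$ and the power $1-(d+2)/(2p)$ come out exactly as in the paper's proof. Two points need tightening. First, the reduction ``it suffices to prove the claim for $\bar X^n$'' is not literal: the claim is a first-moment bound, so after H\"older in $\omega$ against the Girsanov density you need an $L_a(\Omega)$-bound with some $a>1$ on $\int_s^t|g|(r,\bar X^n_r)\,dr$, which amounts to applying the driftless estimate to $|g|^a\in L_{p/a}$; this forces $a<2p/(d+2)$, which is available only because the hypothesis $p>(d+2)/2$ is strict, and one must then verify that the $(t-s)$-exponent survives (it does: $(t-s)^{(a-1)/a}\,(t-s)^{(1-a(d+2)/(2p))/a}=(t-s)^{1-(d+2)/(2p)}$). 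So the loss is not merely in the unspecified constant $N$, contrary to your closing remark --- it threatens the integrability condition and the exponent, and is rescued only by the openness of the hypothesis. The paper avoids this bookkeeping by interleaving H\"older with the change of measure pointwise in $r$: with $\rho_n$ the Girsanov density, $\E|g(r,X^n_r)|=\E\,|g(r,X^n_r)|\rho_n^{2/p}\rho_n^{-2/p}\lesssim\big(\E|g(r,\bar X^n_r)|^{p/2}\big)^{2/p}\lesssim r^{-d/(2p)}\|g(r,\cdot)\|_{L_p(\R^d)}$, using the marginal bound $\E f(\bar X^n_r)\lesssim r^{-d/(2q)}\|f\|_{L_q(\R^d)}$ with $q=2$ and $f=|g(r,\cdot)|^{p/2}$ (quoted from \cite[Lemma 5.5]{le2021taming}; note that only an $L_{q}$-type bound on the density is needed, a pointwise Gaussian upper bound as in \cite{GyK} is more than necessary). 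Second, a harmless arithmetic slip: $\tfrac{d}{2p}\,p'=\tfrac{d}{2(p-1)}$, not $\tfrac{dp}{2(p-1)}$; with the corrected exponent the conclusions you assert (integrability exactly when $p>(d+2)/2$, final power $1-(d+2)/(2p)$) are the ones that actually hold.
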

 \begin{proof}
     It suffices to prove the claim for $g$ bounded with compact support. Let $\bar{X}^n$ be the Euler scheme corresponding to the driftless equation. From \cite[Lemma 5.5]{le2021taming} we have that 
     \begin{equs}  \label{eq:density_Xn}
         \E f(\bar{X}^n_r)  \lesssim r^{-d/2q} \| f\|_{L_q(\R^d)}, 
     \end{equs}
     for any function $f$ and any $ q \in [1, \infty)$.  Let us also denote by $\rho_n$ the Girsanov density, that is
     \begin{equs}
         \rho_n= \exp\Big( - \int_0^1 b \sigma^{-1} (X^n_{\kappa_n(s)}) dB_s-\frac{1}{2}\int_0^1 b \sigma^{-1} (X^n_{\kappa_n(s)}) dB_s \Big),
     \end{equs}
     and notice that due to the boundedness of $b$ and the uniform allipticity of $\sigma$, $\rho_n$ and $\rho_n^{-1}$ have finite moments of any order uniformly in $n\in \mathbb{N}$. Then, by H\"older's inequality and  \eqref{eq:density_Xn} with $q=2$, $f= |g|^{p/2}$, we have  
     \begin{equs}
         \E |g(r, X^n_r)| =   \E |g(r, X^n_r)| \rho^{2/p} \rho^{-2/p} & \lesssim \big(  \E |g(r, X^n_r)|^{p/2} \rho \big)^{2/p}= \big(  \E |g(r, \bar{X}^n_r)|^{p/2} \big)^{2/p}
         \\ & \lesssim \big(r^{-d/4} \| g(r, \cdot) \|_{L_p(\R^d)}^{p/2} \big)^{2/p} = r^{-d/2p} \| g(r, \cdot) \|_{L_p(\R^d)}.
     \end{equs}
     Hence, integrating the above inequality from $s$ to $t$ and applying H\"older's inequality proves the claim.
 \end{proof}
	\begin{proposition}\label{prop.weightedmoment}
		Let Assumptions \ref{asn:sigma} and \ref{asn:Sobolev} hold, and let $X$ and $X^n$ satisfy \eqref{sde} and \eqref{EM}, respectively. For every $p\in [1,\infty)$, there exists a constant $N=N(p, d, \|b\|_{C^0}, \|b\|_{W^\alpha_m}, \| \sigma\|_{C^2})$, such that for all $n \in \mathbb{N}$ we have 
		\begin{equs}\label{est.supXXn}
			\big\|\sup_{t\in[0,1]}|X_t-X^n_t|\big\|_{L_{p}(\Omega)}\leq N n^{-1/2}.
		\end{equs}
	\end{proposition}
	\begin{proof}
		The argument is similar to \cite[Section 7]{le2021taming}, however, the rate in \eqref{est.supXXn} is improved under \cref{asn:Sobolev}. The details are presented below.

		We raise \eqref{eqn.transformSDE} to $p$-th power to find that
		\begin{equs}
			\xi_t:=\sup_{s\in[0,t]}|X_s-X^n_s|^p
			\les |x_0-x_0^n|^p+\sum_{i=0}^6\sup_{s\in[0,t]}|J^i_s|^p.
		\end{equs}
		Using \eqref{tmp.2709regu} and H\"older's inequality, we get
		\begin{equs}
			\sup_{s\in[0,t]}(| J^0_s|^p+| J^1_s|^p) \les \theta^{-p/2}\sup_{s\in[0,t]}|X_s-X_s^n|^p+ \theta^{p/2}\left(\int_0^t \xi_r^{2/p} dr\right)^{\frac p2}.
		\end{equs}

		To estimate $J^4,J^5,J^6$, we will utilize a special case of the pathwise BDG inequality of \cite[Theorem 5]{Pietro}. Namely, there exists a constant $C=C(p,d)$ such that for any c\'adl\'ag martingale $\bar M$, there exists a local martingale $\hat M$ such that with probability one,
		\begin{equs}
			\sup_{s\in[0,t]}|\bar M_s|^p\le C[\bar M]_t^{p/2}+\hat M_t, \quad \forall t.
		\end{equs}
		In the above, $[\bar M]$ is the quadratic variation of $\bar M$.  
		Using the Lipschitz regularity of $\sigma$, \eqref{tmp.2709regu} and the pathwise BDG inequality, we can find a local martingale $M^4$ such that
		\begin{equs}
			\sup_{s\in[0,t]}| J^4_s|^p&\les \left(\int_0^t|X_r-X^n_r|^2dr\right)^{\frac p2}+M^4_t
			\\&\leq\left(\int_0^t \xi_r^{2/p}dA^n_r \right)^{\frac p2}+M^4_t.
		\end{equs}
		By \eqref{eq:Lip_Max},  for almost every $r\in[0,1]$ and almost every $x,y\in\R^d$, we have
		\begin{equs}
			|\nabla u(r,x)- \nabla u(r,y)|\les|x-y|(\cmm|\nabla^2u|(r,x)+\cmm|\nabla^2u|(r,y)).
		\end{equs}
		Applying this (keeping in mind that the laws of $X_r$ and $X^n_r$ are absolutely continuous) and the pathwise BDG inequality, we get
		\begin{equs}
			\sup_{s\in[0,t]}| J^6_s|^p
			&\les\left(\int_0^t|X_r-X^n_r|^2(\cmm|\nabla^2u|(r,X_r)+\cmm|\nabla^2u|(r,X^n_r))^2dr\right)^{\frac p2}+M^6_t
			\\&\leq \left(\int_0^t \xi_r^{2/p}dA^n_r \right)^{\frac p2}+M^6_t
		\end{equs}
		for some local martingale $M^6$.
		Using \cref{lem.Xtkt}, \eqref{tmp.2709regu}, Lipschitz regularity of $\sigma$ and the pathwise BDG inequality, we have
		\begin{equs}
		 	\sup_{s\in[0,t]}|J^5_s|^p\les\left(\int_0^t|X^n_r-X^n_{\kappa_n(r)}|^{2}dr\right)^{\frac p2}+M^5_t.
		\end{equs}
		It follows that
		\begin{equs}
			\xi_t\les \theta^{-1/2} \xi_t+ (1+\theta^{p/2})\left(\int_0^t \xi^{2/p}dA^n\right)^{\frac p2}+J^*_t+M_t,
		\end{equs}
		where 
  $$
  J^*_t=|x_0-x^n_0|^p+\sum_{i=2}^3\sup_{s\in[0,t]}|J^i_s|^p+\left(\int_0^t|X^n_r-X^n_{\kappa_n(r)}|^{2}dr\right)^{\frac p2}
  $$ 
  and $M=M^4+M^5+M^6$. 
		By choosing $\theta$ sufficiently large, this deduces to
		\begin{equs}
		 	\xi_t\les  \left(\int_0^t \xi^{2/p}dA^n\right)^{\frac p2}+J^*_t+M_t.
		\end{equs}
		Applying stochastic Gronwall lemma (\cite[Lemma 3.8]{le2021taming}), we have
		\begin{equs}\label{tmp.2709eA}
			\E e^{-c_p |A^n_1|^{\max(p/2,1)}} \xi_1\les \E J^*_1.
		\end{equs}
			We proceed with an estimate for the right-hand-side of the above inequality. Notice that by Sobolev embedding and \eqref{eq:W^2_p},  we have that $\nabla u \in L_{q}((0, 1); C^{1-\delta}(\R^d))$ for any $\delta \in (0, 1)$ and $q \geq 1$.   By applying \cref{lem:quad1} with the choice $g=I+\nabla u$ and  $f=b$, and \cref{lem:interchange-LpCa}, we have 
		\begin{equs}
			\E \sup_{s\in[0,1]}| J^2_s|^p
			\les n^{-p/2}.
		\end{equs}
		Using the Lipschitz continuity and the boundedeness of $\sigma$, by the Cauchy--Schwarz inequality, we have
		\begin{equs}
			\E \sup_{s\in[0,1]}| J^3_s|^p&\les
			\E\Big|\int_0^1|\nabla^2 u(r,X^n_r)||X^n_r-X^n_{\kappa_n(r)}| dr \Big|^p
			\\&\le \E\left(\int_0^t|\nabla^2 u(r,X^n_r)|^2dr\right)^{\frac p2}\left(\int_0^1|X^n_r-X^n_{\kappa_n(r)}|^2 dr \right)^{\frac p2}
			\\&\le \left[\E\left(\int_0^1|\nabla^2 u(r,X^n_r)|^2dr\right)^{p}\right]^{1/2} \left[\E\left(\int_0^1|X^n_r-X^n_{\kappa_n(r)}|^{2} dr \right)^{p}\right]^{1/2}.
		\end{equs}
		By Jensen's inequality, the Krylov-type estimate  from \cref{lem:Krylov_est_Xn} and \eqref{eq:W^2_p} we see that the first factor on the right-hand side above is bounded uniformly in $n$. Hence, applying \cref{lem.Xtkt}, we obtain that
		\begin{equs}
			\E \sup_{s\in[0,1]}|J^3_s|^p\les n^{-p/2}.
		\end{equs}
		The previous estimates for $\E \sup_{s\in[0,t]}|J^i_s|^p$, $i=2,3$ combined with \cref{lem.Xtkt} and \eqref{tmp.2709eA} imply that
		\begin{equs}
			\big\|e^{-c_p|A^n_1|^{\max(p/2,1)}}\sup_{t\in[0,1]}|X_t-X^n_t|\big\|_{L_p(\Omega)}\les n^{-1/2}.
		\end{equs}
		Under Assumptions \ref{asn:sigma} and \ref{asn:Sobolev}, it follows from  \cite[Lemma 7.4]{le2021taming} that $\sup_n\E e^{\kappa|A^n_1|^\rho}$ is finite for every $\kappa>0$ and $\rho>0$. In addition, by H\"older's inequality, 
		\begin{equs}
			\big\|\sup_{t\in[0,1]}|X_t-X^n_t|\big\|_{L_p(\Omega)}\le \big\|e^{c_{2p}|A^n_1|^p}\big\|_{L_{2p}(\Omega)}^{1/2} \big\|e^{-c_{2 p}|A^n_1|^p}\sup_{t\in[0,1]}|X_t-X^n_t|\big\|_{L_{2p}(\Omega)}^{1/2}  .
		\end{equs}
		Combining with the previous estimates, we obtain \eqref{est.supXXn}.
	\end{proof}
	\begin{proof}[Proof of \cref{prop.tight.sobolev}]
		Define $Z^n_t=X_t-X^n_t$. From \eqref{eqn.transformSDE}, we have
		\begin{equs}
			\|Z^n_{s,t}\|_{L_p(\Omega)}\le \sum_{i=0}^6 \| J^i_{s,t}\|_{L_p(\Omega)}.        \label{eq:Z_est_Js}
		\end{equs}
  Here and below we use the shorthand $f_{s,t}=f_t-f_s$.
  By the fundamental theorem of calculus we have 
  \begin{equs}
      J^0_{s, t} & = \int_0^1 \nabla u(t, X^n_t+ \theta (X_t-X^n_t))  Z^n_{s, t} \, d\theta
      \\ &\qquad+ \int_0^1 \big(\nabla u(t, X^n_t+ \theta (X_t-X^n_t))-\nabla u(s, X^n_s+ \theta (X_s-X^n_s)) \big)  Z^n_s \, d\theta.
  \end{equs}
 Let us fix  $\gamma \in (0, 1)$.  By using \eqref{tmp.2709regu} and \eqref{eq:Holder_grad_u} we get 
  \begin{equs}
     \|  J^0_{s, t} \|_{L_p(\Omega)} 
     \lesssim \theta^{-1/2}   \|   Z^n_{s, t} \|_{L_p(\Omega)} + \|(|t-s|^{\gamma/2} +| X^n_{s, t}|^{\gamma}+|X_{s, t}|^{\gamma}) |Z^n_{s}| \|_{L_p(\Omega)}
     \\
     \lesssim  \theta^{-1/2}   \|  Z^n_{s, t} \|_{L_p(\Omega)} + |t-s|^{\gamma/2}n^{-1/2},
  \end{equs}
  where for the last step we have used H\"older's inequality and  \cref{prop.weightedmoment}. 
		We estimate the remaining terms at the right hand side of \eqref{eq:Z_est_Js}  as in the proof of  \cref{prop.weightedmoment}, however this time, taking also into account \eqref{est.supXXn}. 
		From \eqref{tmp.2709regu} and \eqref{est.supXXn}, 
		\begin{equs}
			\| J^1_{s,t}\|_{L_p(\Omega)}\les\theta^{1/2}\int_s^t \|X_r-X^n_r\|_{L_p(\Omega)}dr\les \theta^{1/2} n^{-1/2}(t-s) .
		\end{equs}
		Next, notice that by Sobolev embedding and \eqref{eq:W^2_p},  we have that $\nabla u \in L_q((0, 1); C^{1-\delta}(\R^d))$ for any $\delta \in (0, 1)$ and $q\geq 1$. By applying \cref{lem:quad1} with the choice $g=I+\nabla u$ and $f=b$, we have that
		\begin{equs}   \label{eq:J_2-to-0}
			\| J^2_{s,t}\|_{L_p(\Omega)} \leq |t-s|^{1/2} n^{-1/2-\eps}. 
		\end{equs}
 By \eqref{eq:u_in_LqLp} it follows that $\nabla^2u$ satisfies the conditions of $g$ in \cref{lem:(ii)}, hence we get 
		\begin{equs}       \label{eq:J_3-to-0}   
			\|J^3_{s,t}\|_{L_p(\Omega)}\les n^{-1/2-\eps}(t-s)^{1/2}.
		\end{equs}
  By the BDG inequality, \eqref{eq:Holder_grad_u}, the Lipschitz continuity of $\sigma$ and \cref{prop.weightedmoment}, we get 
  \begin{equs}
			\| J^4_{s,t}\|_{L_p(\Omega)}\les  n^{-1/2} |t-s|^{1/2}.
		\end{equs}
  Similarly, by using \cref{lem.Xtkt} in place of \cref{prop.weightedmoment}, we get 
  \begin{equs}
			\| J^5_{s,t}\|_{L_p(\Omega)}\les \left\|\left(\int_s^t|X^n_r-X^n_{\kappa_n(r)}|^2dr\right)^{1/2}  \right\|_{L_p(\Omega)}\les n^{-1/2} |t-s|^{1/2}.
		\end{equs}
  By the BDG inequality, \eqref{est.supXXn}, H\"older's inequality, and \cref{prop.weightedmoment}, we get 
		\begin{equs}
			\|J^6_{s,t}\|_{L_p(\Omega)}\les \|\sup_{r\in[0,1]}|X_r-X^n_r| | A^n_{s,t}|^{1/2} \|_{L_p(\Omega)} \les n^{-1/2}  \|  A^n_{s,t} \|_{L_p(\Omega)}^{1/2}.
		\end{equs}
   Then, by H\"older's inequality, Krylov's estimate and its variant \cref{lem:Krylov_est_Xn}, we get 
  \begin{equs}
 \| A^n_{s,t} \|^p_{L_p(\Omega)} & \les |t-s|^{p-1} \E \int_s^t |\mathcal{M} |\nabla^2 u | (r, X_r) |^{2p} + |\mathcal{M} |\nabla^2 u |(r, X^n_r) |^{2p} \, dr +|t-s|^p
 \\
 & \les |t-s|^{p-1+1-(d+2)/(2q)} ( \| \nabla^2u \|^{2p}_{L_{2p q}((0, 1) \times \R^d)}+1),
 \end{equs}
provided that $q> (d+2)/2$. Hence, for $q$ large enough, the above combined with \eqref{eq:W^2_p} gives
 \begin{equs}
\| A^n_{s,t} \|^p_{L_p(\Omega)} & \les  |t-s|^{\gamma p},
   \end{equs}
   which implies that
   \begin{equs}
       \|J^6_{s,t}\|_{L_p(\Omega)}\les |t-s|^{\gamma/2} n^{-1/2}.
   \end{equs}
Combining all these estimates, gives, with some constant $N$ independent of $\theta$ and $n$,
\begin{equs}
   \|   Z^n_{s, t} \|_{L_p(\Omega)} \leq N(  \theta^{-1/2}   \|   Z^n_{s, t} \|_{L_p(\Omega)} + |t-s|^{\gamma/2}n^{-1/2} ).
\end{equs}
Hence, provided that  $\theta$ is sufficiently large, the result follows.		
	\end{proof}

 \begin{proof}[Proof of Theorems \ref{thm:existence_uniqueness_Sobolev} and \ref{thm:main_theorem_Sobolev}]
 Let us start with the uniqueness part of \cref{thm:existence_uniqueness_Sobolev}. Recall that the matrix $I- \nabla u (t, x)$ is invertible and both $I- \nabla u (t, x)$ and $(I- \nabla u (t, x))^{-1}$ are bounded uniformly in $(t, x)$. Then, it is easy to see that if $(X,V)$ satisfies \eqref{sde}-\eqref{eq:V_Sbolev_simplified}, then $Z:=\big(I-\nabla u(\cdot, X_\cdot)\big)V$ satisfies the linear equation
\begin{equs}   
		Z_t&=  \theta\int_0^t\nabla u(r,X_r) \big(I-\nabla u(t, X_t) \big)^{-1} Z_r\, dr
  \\
		&  \quad +\int_0^t[\nabla \big( (\nabla u + I ) \sigma \big) ](r,X_r)\big(I-\nabla u(t, X_t) \big)^{-1} Z_rdB_r
		\\&\quad+\frac1{\sqrt2}\int_0^t(I+\nabla u)\sigma \nabla \sigma  (r,X_r) dW_r.  
   \label{eq:Z}
	\end{equs}
For the system \eqref{sde},\eqref{eq:Z} we have that strong uniqueness holds, and by the result of Yamada and Watanabe \cite[Proposition 1]{Yamada_Watanabe} (see also \cite[Proposition 3.20, p. 309]{Karatzas}) uniqueness in law holds as well. Consequently, if  $\{ (\Omega, \cF,  \mathbb{F}, \bP),  X, V, B, W\}$ and $\{ (\Omega', \cF',  \mathbb{F}', \bP'),  X', V', B', W'\}$ are solutions of the system \eqref{sde},\eqref{eq:V}, then the laws of $(X, (I-\nabla u (\cdot, X_{\cdot})^{-1} V, B, W)$ and  $(X', (I-\nabla u (\cdot, X'_{\cdot})^{-1} V', B', W')$ on $\mathcal{C}$ coincide. Since the map $(x, y) \mapsto (x, (I-\nabla u(t, x))^{-1} y)  $ is a measurable bijection for each $t\geq 0$, it follows that the  finite dimensional distributions of $(X, V, B, W)$ and  $(X', V', B', W')$ coincide, hence, so does their distributions on $\mathcal{C}$. 

Next, we move to the proof of the existence part of \cref{thm:existence_uniqueness_Sobolev} and \cref{thm:main_theorem_Sobolev}. With the usual notation, let us set $S^n=(X, V^n, B, W^n)$. Clearly, because of the uniqueness part, it suffices to show that for any  subsequence $\{S^n\}_{n \in \mathbb{N}_1}  \subset  \{S^n\}_{n \in \mathbb{N}}$, there exists a further subsequence $\{S^n\}_{n \in \mathbb{N}_2}  \subset  \{S^n\}_{n \in \mathbb{N}_1}$ which converges weakly to a  solution of the system \eqref{sde},\eqref{eq:V_Sbolev_indices}. We fix  $\beta \in (0, 1/2)$, and by arguing as in the proof of Theorems \ref{thm:existence_uniqueness} and \ref{thm:main_Holder}, this time using the compactness result from Proposition \ref{prop.tight.sobolev}, we have that the exists a probability space 
     $( \hat \Omega, \hat \cF, \hat  \bP) $ and random variables 
	\begin{equs}
	(\hat{ \mathcal{X}}^{n},  \hat  X^n, \hat   V^n, \hat  B^n , \hat W^n)  : \hat  \Omega \to  \mathcal{C}^{\beta}=: \big(C^\beta([0, 1] ; \R^d)^4 \times \big) \times C^\beta([0, 1] ; \R^{d\times d})
	\end{equs}
	for $n \in \mathbb{N}_2$, 
	and 
	\begin{equs}
	(\hat{ \mathcal{X}}, \hat  X,  \hat   V, \hat  B, \hat W)   : \hat \Omega \to  \mathcal{C}^{\beta}, 
	\end{equs}
	such that 
	\begin{equs}      \label{eq:same-distribution-Sobolev}
	(\hat{ \mathcal{X}}^{n},  \hat  X^n, \hat   V^n, \hat  B^n , \hat W^n) \overset{d}{=}  (X, X^n, V^n, B, W^n) \ \text{ on  \   $ \mathcal{C}^{\beta}$}, 
	\end{equs}
	and  
	\begin{equs}               \label{eq:ghydf46_0}     
	(\hat{ \mathcal{X}}^{n},  \hat  X^n, \hat   V^n, \hat  B^n , \hat W^n) \to (\hat{ \mathcal{X}}, \hat  X,  \hat   V, \hat  B, \hat W)   \ \text{ as \ $n \to \infty$},
	\end{equs}
	$\hat \bP$-a.s. in $ \mathcal{C}^{\beta}$ and also in $L_p(\hat{\Omega}; \cC^\beta)$. By \eqref{eq:same-distribution-Sobolev}, \eqref{eq:ghydf46_0},  and Proposition \ref{prop.weightedmoment} it follows that $\hat{\X}=\hat{X} $. Theroefore, we have 
 \begin{equs}            \label{eq:almost-sure-convergence-Sobolev}
	(\hat{ \mathcal{X}}^{n},  \hat  X^n, \hat   V^n, \hat  B^n , \hat W^n) \to (\hat{X}, \hat  X,  \hat   V, \hat  B, \hat W)   \ \text{ as \ $n \to \infty$},
	\end{equs}
	$\hat \bP$-a.s. in $ \mathcal{C}^{\beta}$ and also in $L_p(\hat{\Omega}; \cC^\beta)$.
	Let $\{\hat \cF^n_t\}_{t \geq0}$ be the augmentation of the filtration generated by $\hat{ \mathcal{X}}^n, \hat B^n$, and $\hat W^n$.  It follows again that $\hat B^n$  is a $\{\hat \cF^n_t\}_{t \geq0}$-Brownian motion and that  the equalities \eqref{eq:idk2}, \eqref{eq:idk3},  and \eqref{eq:idk4}, are satisfied. Next, we show that  \eqref{eq:idk1} is also satisfied. Let us fix $t \in [0, 1]$ and let $\pi$ be a partition on $[0, t]$. Let $b_m$ be a mollification of $b$, and let us set 
 \begin{equs}
     \hat{\mathbb{S}}_{\pi}= x_0+ \sum_{[s, s'
     ] \in \pi} \sigma( \hat{\mathcal{X}}^n_s)  \hat{B}^n_{s, s'}, \ \ \hat{\mathbb{D}}_{\pi}=  \sum_{[s, s'
     ] \in \pi} b( \hat{\mathcal{X}}^n_s)  (s'-s), \ \  \hat{\mathbb{D}}^m_{\pi}=  \sum_{[s, s'
     ] \in \pi} b_m( \hat{\mathcal{X}}^n_s)  (s'-s), 
 \end{equs}
 and 
 \begin{equs}
     \hat{\mathbb{S}}= x_0+  \int_0^ t \sigma( \hat{\mathcal{X}}^n_s) \, d\hat{B}^n_s , \ \ \hat{\mathbb{D}}=  \int_0^t b( \hat{\mathcal{X}}^n_s)  \, ds , \ \  \hat{\mathbb{D}}^m=  \int_0^t b_m( \hat{\mathcal{X}}^n_s)  \, ds ,  
 \end{equs}
 We also  define $\mathbb{S}_{\pi}, \mathbb{D}_{\pi},\mathbb{D}^m _{\pi},\mathbb{S}, \mathbb{D}$ and, $\mathbb{D}^m$,  similarly but with $\hat{\mathcal{X}}^n$, $\hat{B}^n$ replaced by $X$, $B$. By 
 \eqref{eq:same-distribution-Sobolev} it follows that the distributions of $(\hat{\mathcal{X}}^n, \hat{B}^n)$ and $(X, B)$ coincide, which in particular implies that  $(\hat{\mathcal{X}}^n_t,\hat{\mathbb{S}}_{\pi}, \hat{\mathbb{D}}^m_{\pi})$ and $(X_t,\mathbb{S}_{\pi}, \mathbb{D}^m_{\pi})$ have the same distribution.  Using the continuity of $b_m$ and $\sigma$, we can let  let $|\pi| \to 0$, and conclude that  $(\hat{\mathcal{X}}^n_t,\hat{\mathbb{S}}, \hat{\mathbb{D}}^m)$ and $(X_t,\mathbb{S}, \mathbb{D}^m)$ have the same distribution. Next, we want to let $m \to \infty$, in order to conclude that $(\hat{\mathcal{X}}^n_t,\hat{\mathbb{S}}, \hat{\mathbb{D}})$ and $(X_t,\mathbb{S}, \mathbb{D})$ have the same distribution. For this, it suffices to show that $\mathbb{D}^m \to  \mathbb{D}$  and $\hat{\mathbb{D}}^m \to  \hat{\mathbb{D}}$  in probability. Notice that since $b_m \to b$ almost everywhere and  since the law of $X_s$ is absolutely continuous for any $s \in [0, 1]$, we can use the boundedness of $b$ in order to get from Lebesgue's theorem on  dominated convergence that $\mathbb{D}^m \to  \mathbb{D}$ in $L_1(\Omega)$, hence also in probability. Similarly we get that $\hat{\mathbb{D}}^m \to  \hat{\mathbb{D}}$  in probability. Consequently,  the laws of $(\hat{\mathcal{X}}^n_t,\hat{\mathbb{S}}, \hat{\mathbb{D}})$ and $(\mathcal{X}^n_t,\mathbb{S}, \mathbb{D})$ coincide. In particular, this means that $(\hat{\mathcal{X}}^n, \hat{B}^n)$ satisfies \eqref{eq:idk1}. 

 Next,  let $\hat{\mathbb{F}}:= \{\hat \cF_t\}_{t \in [0, 1]}$ be the augmentation of the filtration generated by $\hat{X}, V,  \hat B, \hat W$.   We claim that 
	$\{ ( \hat \Omega,  \hat  \cF,   \hat{\mathbb{F}}, \hat \bP), \hat{ X}, \hat V, \hat B, \hat W \}$ is a solution of the system  \eqref{sde},\eqref{eq:V_Sbolev_indices}.
 
 The fact that $ \hat B$ and $\hat W$ are  independent $\hat{\mathbb{F}}$-Brownian motions with values in $\R^d $ and $\mathbb{R}^{d\times d}$, respectively, is shown in the proof of Theorems \ref{thm:existence_uniqueness} and \ref{thm:main_Holder}. To show that  $\{(\hat{\Omega}, \hat{\cF}, \hat{\mathbb{F}}, \hat{\bP}), \hat{ \mathcal{X}}, \hat B\}$ is a solution of \eqref{sde}, notice that since the distributions of $(\hat{\mathcal{X}}^n, \hat{B}^n)$ and $(X,B)$ coincide for all $n\in \mathbb{N}$, we also get  that the distributions of $(\hat{\mathcal{X}}, \hat{B})$ and $(X,B)$ also coincide. From here,  we can repeat the above argument where we showed that $(\hat{\mathcal{X}}^n, \hat{B}^n)$ satisfies \eqref{eq:idk1} in order to obtain that $(\hat{\mathcal{X}}, \hat{B})$ solves \eqref{sde}. Moreover, since pathwise uniqueness holds for \eqref{sde}, it follows that $\mathcal{X}$ is adapted to the augmentation of the filtration generated by $\hat{B}$. In particular, this shows that $( \mathcal{X}, \hat B) \indep \hat{W}$, since $ \hat B \indep \hat{W}$ .
	Consequently, \eqref{item:def_sol_1_Sobolev} and \eqref{item:def_sol_2_Sobolev} from \cref{def:solution_of_system_Sobolev} are satisfied. Moreover, \eqref{item:def_sol_3_Sobolev} follows from the construction of $ \hat{\mathbb{F}}$ and the fact that with probability one $V \in C^\beta([0, 1])$.  
 
 Finally, we focus on showing \eqref{item:def_sol_4_Sobolev}. Let $u=(u^{(1)},\ldots,u^{(d)})$, where $u^{(\ell)}$, for $\ell \in \{1, ..., d\}$ is the solution to  \eqref{eqn.pde}.  Using It\^o's formula, we have
		\begin{equs}\label{eqn.zkinn}
			\hat{V}^n_t=\sqrt n\sum_{i=0}^6  \hat{J}^{i, n}_t,
		\end{equs}
		where the  $\hat{J}^{i, n}$ s are defined similarly to the $J^i$s in \eqref{eq:def_J} but  with $(\hat{\X}^n,\hat{X}^n,\hat{B}^n)$ in place of $(X,X^n,B)$.  We want to let $n \to \infty$ in the above equality and recover \eqref{eq:V_Sbolev_simplified}. For $\hat{J}^{0, n}_t$, we have 
  \begin{equs}
      \sqrt{n}\hat{J}^{0, n}_t = \hat{V}^n_t \int_0^1 \nabla u\big(t, \hat{X}^n_t+ \vartheta(\hat{\X}^n_t-\hat{X}^n_t) \big)  \, d \vartheta, 
  \end{equs} 
  and by using the continuity of $\nabla u$ and the convergence in \eqref{eq:almost-sure-convergence-Sobolev}, we get by virtue of Lebesgue's theorem on dominated convergence  that $\sqrt{n}\hat{J}^{0, n}_t \to \nabla u (t, \hat{X}_t) \hat{V}_t $ in probability. Similarly, we obtain that 
  \begin{equs}
      \sqrt{n}\hat{J}^{1, n}_t \xrightarrow{\hat{\bP}}  \theta\int_0^t\nabla u(r,\hat{X}_r)\hat{V}_rdr.
  \end{equs}
  As in \eqref{eq:J_2-to-0} and \eqref{eq:J_3-to-0}, we have that $\sqrt{n}\hat{J}^{2, n}_t \to 0$ and $\sqrt{n}\hat{J}^{3, n}_t \to 0$, in $L_p(\hat{\Omega})$, hence, also in probability. Then, we move to $\hat{J}^{4, n}$. We have that 
  \begin{equs}
      \sqrt{n}\hat{J}^{4, n}_t = \int_0^t \int_0^1 \nabla u (r, \hat{X}^n_r) \hat{V}^n_r \nabla \sigma \big(\hat{X}^n_r+\vartheta(\hat{\X}^n_r- \hat{X}^n_r) \big) \, d \vartheta \, dr.
  \end{equs}
  By \eqref{eq:almost-sure-convergence-Sobolev}, combined with the continuity of $\nabla \sigma$ and $\nabla u$ (see \eqref{eq:Holder_grad_u}),  by virtue of  Lebegue's dominated convergence theorem,  we have that 
  \begin{equs}
      \sqrt{n}\hat{J}^4_t \xrightarrow{\hat{\bP}} \int_0^t  \nabla u (r, \hat{X}_r) \hat{V}_r \nabla \sigma \big(\hat{X}_r) \big) \,  dr.
  \end{equs}
  Next, we see that 
  \begin{equs}
			\sqrt n \hat{J}^{5, n}_t
			&=
			\int_0^t\int_0^1 [I+ \nabla  u(r, \hat{X}^n_r)] \nabla \sigma(\hat{X}^n_r-\vartheta(\hat{X}^n_r-\hat{X}^n_{\kappa_n(r)}))d \vartheta b(\hat{X}^n_{\kappa_n(r)}) \sqrt{n} (r-\kappa_n(r)) \, d\hat{B}^n_r 
			\\&\quad+\frac1{\sqrt 2}\int_0^t\int_0^1 \nabla  u(r,\hat{X}^n_r) \nabla \sigma (\hat{X}^n_r-\vartheta(\hat{X}^n_r-\hat{X}^n_{\kappa_n(r)}))d \vartheta \sigma (\hat{X}^n_{\kappa_n(r)}) d\hat{W}^n_r.
		\end{equs}
		Since $\nabla u,\nabla \sigma$ and $b$ are bounded, the first term  vanishes in the limit. 
		For the second term, we can use the uniform continuity of $\nabla u,\sigma$, and  $\nabla \sigma$ together with \eqref{eq:almost-sure-convergence-Sobolev} to obtain, by virtue of Lemma \ref{lem:convergence_stochastic_integrals}, 
  \begin{equs}
      \sqrt n \hat{J}^{5, n}_t \xrightarrow{\hat{\bP}}  \frac1{\sqrt 2} \int_0^t  \nabla  u(r,\hat{X}_r) \nabla \sigma (\hat{X}_r)  \sigma (\hat{X}_r) d\hat{W}_r. 
  \end{equs}
  For the last term, we claim that 
  \begin{equs} \label{eq:hat_J^6_convergence}
    \sqrt n \hat{J}^{6, n}_t  \xrightarrow{\hat{\bP}}  \int_0^t   \nabla^2 u(r,\hat{X}_r)  \hat{V}_r \sigma(\hat{X}_r)  d\hat{B}_r. 
  \end{equs}
  To see this, let $p> (d+2)/2$ and let  $u_m$ be a smooth approximation of $u$ such that $\| u_m -u\|_{L_{4p}([0, 1]; W^2_p)} \to 0$, as $m \to \infty$.  For $\eps>0$, we have  
\begin{equs}
 \,   & \hat{\mathbf{P}} \left( \Big|  \sqrt n \hat{J}^{6, n}_t  - \int_0^t   \nabla^2 u(r,\hat{X}_r)  \hat{V}_r \sigma(\hat{X}_r)  d\hat{B}_r \Big| > 3 \eps \right) 
   \\
     \leq  \, 
  & \hat{\mathbf{P}} \left( \Big| \int_0^t   \sqrt n \big[ (\nabla u - \nabla u_m)(r,  \hat{\mathcal{X}}^n_r)-  (\nabla u - \nabla u_m)(r,  \hat{X}^n_r) \big] \sigma(\hat{\mathcal{X}}^n_r) \,   d\hat{B}^n_r \Big| > \eps \right)  
  \\
  & \, +\hat{\mathbf{P}} \left( \Big| \int_0^t   \sqrt n \big[ \nabla u_m(r,  \hat{\mathcal{X}}^n_r)-   \nabla u_m(r,  \hat{X}^n_r) \big] \sigma(\hat{\mathcal{X}}^n_r) \,   d\hat{B}^n_r  - \int_0^t   \nabla^2 u_m (r,\hat{X}_r)  \hat{V}_r \sigma(\hat{X}_r)  d\hat{B}_r\Big| > \eps \right) 
  \\
  & \, +\hat{\mathbf{P}} \left( \Big| \int_0^t  \big[ \nabla^2  u_m(r,  \hat{X}_r)-   \nabla^2 u(r, \hat{X}_r) \big]  \hat{V}_r \sigma(\hat{X}_r) \,  d\hat{B}_r\Big| > \eps \right)
  \\
 \,   =& \, A_1^{n, m}+A_2^{n,m}+A_3^m . 
  \end{equs}
For $A_1$, we use Markov's inequality, It\^o's isometry, the boundedness of $\sigma$, and \eqref{eq:Lip_Max} to obtain 
\begin{equs}
    A^{n, m}_1 \leq \eps^{-2} \| \sigma \|_{C^0}\E \int_0^t |V^n_r|^2   \Big( \mathcal{M}| \nabla^2 (u_m- u) | (r,\hat{\mathcal{X}}^n_r)-  \mathcal{M}| \nabla^2 (u_m- u) | (r,\hat{X}^n_r)   \Big)^2 \, dr.
\end{equs}
Next, we use the the Cauchy-Schwartz inequality, the uniform bound on $\hat{V}^n$  which follows from Proposition \ref{prop.tight.sobolev} and \eqref{eq:same-distribution-Sobolev} and Krylov's estimate as well as its discrete version from Lemma \ref{lem:Krylov_est_Xn}, and \eqref{eq:Hardy-Littlewood},  to see that 
\begin{equs}
    \sup_n A^{n, m}_1 &\leq    N \eps^{-2} \Big(\E \int_0^t\big( \mathcal{M}| \nabla^2 (u_m- u) | (r,\hat{\mathcal{X}}^n_r)-  \mathcal{M}| \nabla^2 (u_m- u) | (r,\hat{X}^n_r)   \big)^4 \, dr \Big)^{1/2}
    \\
    &\leq   N \eps^{-2} \| \mathcal{M}| \nabla^2 (u_m- u) |^ 4 \|_{L_p([0, 1] \times \R^d)}^{1/2} 
    \\
    &\leq   N \eps^{-2} \|  \nabla^2 (u_m- u)  \|_{L_{4p}([0, 1] \times \R^d)}^{2p}  \to 0,  
\end{equs}
as $m \to \infty$. In a similar manner, we get that $A^m_3 \to 0$
as $m \to \infty$. For $A^{n,m }_2$, we claim that for fixed $m \in \mathbb{N}$, $A^{n,m}_2 \to 0 $, as $n \to \infty$. Indeed, for fixed $m$, since $u_m$ and $\sigma$ are sufficiently regular, it is easy to see by \eqref{eq:almost-sure-convergence-Sobolev} and the fundamental theorem of calculus that 
\begin{equs}
   \sup_{ r \in [0, 1]}| \sqrt n \big[ \nabla u_m(r,  \hat{\mathcal{X}}^n_r)-   \nabla u_m(r,  \hat{X}^n_r) \big] \sigma(\hat{\mathcal{X}}^n_r) \, -  \nabla^2 u_m (r,\hat{X}_r)  \hat{V}_r \sigma(\hat{X}_r)|  \xrightarrow{\hat{\bP}} 0. 
\end{equs}
By the above combined with \eqref{eq:almost-sure-convergence-Sobolev} and \cref{lem:convergence_stochastic_integrals}, the claim follows. 
Consequently, we have shown \eqref{eq:hat_J^6_convergence}. 

Finally, we can now go back to \eqref{eqn.zkinn} and let $n \to \infty$ in order to see that the processes $(\hat{X}, \hat{V}, \hat{B}, \hat{W})$ indeed satisfy \eqref{eq:V_Sbolev_simplified}, that is, \eqref{item:def_sol_4_Sobolev} from \cref{def:solution_of_system_Sobolev} is satisfied. This finishes the proof.

 \end{proof}

\begin{appendix}
    
\section{}

The following is a straightforward consequence of \cite[Theorem 2.2]{KurtzProtter}.
\begin{lemma}                \label{lem:convergence_stochastic_integrals}
    For $n \in \mathbb{N}\cup\{0\}$, let  $B^n$ be continuous $\mathbb{F}^n$-martingales and let $G^n$ be c\'adl\'ag  $\mathbb{F}^n$-adapted processes. Assume that $G^n \to G^0$ and $B^n \to B^0$, uniformly on $[0, 1]$, in probability. Then,   in probability, we have
    \begin{equs}
    \sup_{t \in [0, 1]}\big|\int_0^t G^n_s \, dB^n_s - \int_0^t G^0_s \, dB^0_s\big | \to 0.    
    \end{equs}
\end{lemma}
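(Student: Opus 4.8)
The plan is to obtain the statement as an application of \cite[Theorem 2.2]{KurtzProtter}; the bulk of the work lies in verifying its hypotheses, the only delicate one being the uniform controlled variations condition (UCV) on the integrators, which I would check by localization since no a priori moment bound on the $B^n$ is available. Two harmless reductions come first. One may assume that each $\mathbb{F}^n$ is the augmented filtration generated by $(G^n,B^n)$: this does not affect adaptedness of $G^n$, it keeps $B^n$ a martingale by the tower property (the new filtration being contained in $\mathbb{F}^n$), and it leaves $\int_0^\cdot G^n_s\,dB^n_s$ unchanged, placing us in the standing framework of \cite{KurtzProtter}. One may also assume $B^n_0=0$ for every $n$, by replacing $B^n$ with $B^n-B^n_0$: the new process is still a continuous $\mathbb{F}^n$-martingale, the convergence hypotheses persist because $\sup_t|(B^n_t-B^n_0)-(B^0_t-B^0_0)|\le 2\sup_t|B^n_t-B^0_t|$, and the integrals are unaffected. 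Finally, uniform-on-$[0,1]$ convergence in probability of $(G^n,B^n)$ to $(G^0,B^0)$ yields in particular the weak convergence $(G^n,B^n)\Rightarrow(G^0,B^0)$ in $D([0,1])$ required there, and, $B^0$ being a continuous $\mathbb{F}^0$-martingale by hypothesis, the limiting integrator is a semimartingale.

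The core step is the UCV verification for the continuous martingales $\{B^n\}$ (their finite-variation parts being zero). For $\alpha>0$ set $\sigma^n_\alpha=\inf\{t\in[0,1]:|B^n_t|\ge\alpha\}\wedge1$. By continuity and $B^n_0=0$, the stopped process $(B^n)^{\sigma^n_\alpha}$ is a martingale bounded by $\alpha$, hence square integrable, so that
\[
\E\big[\langle B^n\rangle_{1\wedge\sigma^n_\alpha}\big]=\E\big[|B^n_{1\wedge\sigma^n_\alpha}|^2\big]\le\alpha^2 ,
\]
which is precisely the bound demanded by UCV; equivalently, combining this with the tightness of $\{\sup_{t\in[0,1]}|B^n_t|\}_n$ --- a consequence of the uniform convergence in probability of $B^n$ to the almost surely bounded process $B^0$ --- the quadratic variations $\{\langle B^n\rangle_1\}_n$ are stochastically bounded. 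This is the one place where a genuine estimate is needed, and localization at $\sigma^n_\alpha$ together with the centering $B^n_0=0$ is the key device.

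With all hypotheses in hand, \cite[Theorem 2.2]{KurtzProtter} gives the joint convergence $\big(G^n,B^n,\int_0^\cdot G^n_s\,dB^n_s\big)\Rightarrow\big(G^0,B^0,\int_0^\cdot G^0_s\,dB^0_s\big)$, and, in the in-probability version of that theorem, which applies because the input convergence of $(G^n,B^n)$ takes place in probability on a common probability space, this upgrades to the asserted uniform convergence in probability of $\int_0^\cdot G^n_s\,dB^n_s$ towards $\int_0^\cdot G^0_s\,dB^0_s$. Should one prefer to use only the weak-convergence statement, the passage to convergence in probability is a routine subsequence argument: from an arbitrary subsequence extract a further one along which $(G^n,B^n)\to(G^0,B^0)$ almost surely and uniformly, note that the corresponding triples still converge in law (to $(G^0,B^0,\int_0^\cdot G^0_s\,dB^0_s)$), and conclude since the limiting third coordinate is almost surely a functional of $(G^0,B^0)$ together with $\mathbb{F}^0$, so that the joint limit is supported on the diagonal of the last two components. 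Everything outside the UCV estimate is bookkeeping.
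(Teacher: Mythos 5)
Your proposal is correct and follows essentially the same route as the paper, which proves the lemma simply by invoking \cite[Theorem 2.2]{KurtzProtter}; your text merely supplies the verifications the paper leaves implicit (the UCV condition for the continuous martingales via stopping at the exit time of a ball after centering $B^n_0=0$, and the in-probability form of the conclusion). No gaps.
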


	\begin{lemma}          \label{lem:interchange-LpCa}
Let $Y$ be a process on $[0, 1]$ and $\beta \in (0,1)$. For any $\eps \in (0, \beta)$, there exists $p_0 \geq 0$ such that for all $p \geq p_0$ we have 
\begin{equs} \nonumber
[ Y ]_{L_p(\Omega; C^{\beta-\eps}([0, 1]))} \leq N [ Y]_{C^{\beta}([0, 1];L_p(\Omega))}, 
\\ \nonumber
\| Y\|_{L_p(\Omega; C^{\beta-\eps}([0, 1]))} \leq N \| Y\|_{C^{\beta}([0, 1];L_p(\Omega))}, 
\end{equs}
where $N$ depends only on $p, \beta$ and $\eps$. 
\end{lemma}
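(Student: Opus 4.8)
The plan is to read this as a quantitative Kolmogorov continuity criterion and to prove it through the Garsia--Rodemich--Rumsey (GRR) inequality. Set $M:=[Y]_{C^{\beta}([0,1];L_p(\Omega))}$ and assume $M<\infty$, since otherwise there is nothing to prove; by definition this means $\|Y_t-Y_s\|_{L_p(\Omega)}\le M|t-s|^{\beta}$ for all $s,t\in[0,1]$. Working with the (continuous) separable modification of $Y$, which exists by Kolmogorov's theorem and for which the pathwise H\"older seminorms below are meaningful, I would fix $p_0:=2/\eps$; then for every $p\ge p_0$ one has $\beta-2/p\ge\beta-\eps>0$ and $\beta p>2$.

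The first step is to apply the GRR inequality on $[0,1]$ with $\Psi(u)=|u|^{p}$ and $\psi(u)=u^{\beta}$: for each fixed $\omega$,
\begin{equs}
[Y(\omega)]_{C^{\beta-2/p}([0,1])}^{p}\le N(p,\beta)\int_0^1\!\!\int_0^1 \frac{|Y_t(\omega)-Y_s(\omega)|^{p}}{|t-s|^{\beta p}}\,ds\,dt ,
\end{equs}
which is legitimate precisely because $\beta p>2$ makes the integral $\int_0^{|t-s|}u^{\beta-1-2/p}\,du$ produced by the GRR bound finite, and the resulting constant depends only on $p$ and $\beta$. Taking expectations, applying Tonelli's theorem, and inserting the moment bound $\E|Y_t-Y_s|^{p}=\|Y_t-Y_s\|_{L_p(\Omega)}^{p}\le M^{p}|t-s|^{\beta p}$ yields
\begin{equs}
\E\,[Y]_{C^{\beta-2/p}([0,1])}^{p}\le N(p,\beta)\int_0^1\!\!\int_0^1 M^{p}\,ds\,dt = N(p,\beta)\,M^{p} ,
\end{equs}
that is, $[Y]_{L_p(\Omega;C^{\beta-2/p}([0,1]))}\le N(p,\beta)\,M$.

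It remains to descend from exponent $\beta-2/p$ to $\beta-\eps$ and to pass from the seminorm to the full norm. Since $|t-s|\le1$ and $(\beta-2/p)-(\beta-\eps)=\eps-2/p\ge0$, for any continuous $f$ on $[0,1]$ one has $[f]_{C^{\beta-\eps}([0,1])}\le[f]_{C^{\beta-2/p}([0,1])}$; together with the previous display this gives the first inequality of the lemma. For the second, I would use $t=0$ as a reference point: pathwise, $\sup_{t\in[0,1]}|Y_t|\le|Y_0|+[Y]_{C^{\beta-\eps}([0,1])}$, so $\|\sup_t|Y_t|\|_{L_p(\Omega)}\le\|Y_0\|_{L_p(\Omega)}+N M\le N\|Y\|_{C^{\beta}([0,1];L_p(\Omega))}$; combining this with the seminorm bound and the paper's convention $\|f\|_{C^{\gamma}}=\|f\|_{C^0}+[f]_{C^{\gamma}}$ for $\gamma\in(0,1)$ gives $\|Y\|_{L_p(\Omega;C^{\beta-\eps}([0,1]))}\le N\|Y\|_{C^{\beta}([0,1];L_p(\Omega))}$. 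The argument is entirely standard, so there is no genuine obstacle; the only points requiring care are bookkeeping ones — verifying that the GRR constant depends on $p,\beta$ alone (hence not on $\Omega$ or on $Y$), that the threshold $p_0$ depends only on $\eps$, and that, when $Y$ is not assumed continuous at the outset, one first replaces it by the continuous modification before invoking pathwise H\"older seminorms.
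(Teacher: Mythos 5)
Your argument is correct, and it is essentially the route the paper has in mind: the paper only remarks that the lemma is "a simple application of Kolmogorov's continuity criterion or of the Sobolev--Slobodeckij embedding theorem," and your Garsia--Rodemich--Rumsey computation (with $\Psi(u)=|u|^p$, $\psi(u)=u^\beta$, the threshold $p_0=2/\eps$ ensuring $\beta p>2$, Tonelli, and the descent from exponent $\beta-2/p$ to $\beta-\eps$) is precisely the quantitative form of that argument, including the passage to a continuous modification and the handling of the full norm via $Y_0$.
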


The proof of the above lemma is a simple application of Kolmogorov's continuity criterion or of the Sobolev Slobodeckij embedding theorem. 

\begin{lemma}               \label{lem:interpolation_PDE}
Under Assumptions \ref{asn:sigma} and \ref{asn:Sobolev},  estimate \eqref{eq:u_in_LqLp} holds. 
\end{lemma}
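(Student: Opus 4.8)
The plan is to convert the fractional seminorm of $\nabla^2 u^{(\ell)}$ in the spatial variable into an $L_m$-estimate for translated differences, which can then be fed into the parabolic a priori bounds already in force. Write $v:=u^{(\ell)}$, $a:=\sigma\sigma^*$, $\tau_h g(\cdot):=g(\cdot+h)$ for $h\in\R^d$, and $[g]_{W^\alpha_m(\R^d)}^m:=\int_{\R^d}\int_{\R^d}|g(x)-g(y)|^m|x-y|^{-d-\alpha m}\,dx\,dy$ for the Gagliardo seminorm, so that $\|g\|_{W^\alpha_m}^m=\|g\|_{L_m}^m+[g]_{W^\alpha_m}^m$ and, by the substitution $y=x+h$, $[g]_{W^\alpha_m}^m=\int_{\R^d}\|\tau_h g-g\|_{L_m(\R^d)}^m|h|^{-d-\alpha m}\,dh$. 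By \eqref{eq:W^2_p} we already know $v\in W^{1,2}_{p_0}((0,1)\times\R^d)$, so the manipulations below are legitimate.

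First I would record the \emph{mixed-norm} parabolic a priori estimate for $\mathcal{L}:=\partial_t+\tfrac{1}{2}a^{(i,j)}\partial^2_{i,j}+b\cdot\nabla-\theta$: for $\theta$ large, any $w$ solving $\mathcal{L}w=F$ on $(0,1)\times\R^d$ with $w(1,\cdot)=0$ obeys
\[
\|w\|_{L_{p_0}((0,1);W^2_m(\R^d))}+\|\partial_t w\|_{L_{p_0}((0,1);L_m(\R^d))}\le N\|F\|_{L_{p_0}((0,1);L_m(\R^d))},
\]
with $N=N(\lambda,d,p_0,m,\|\sigma\|_{C^2},\|b\|_{C^0})$. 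For $p_0=m$ this is precisely \cite[Theorem~1, p.120]{Krylov_PDE}, quoted around \eqref{eq:W^2_p}; for general $p_0\ge m$ it is its standard mixed-norm analogue for operators with uniformly continuous (here $C^2$) principal part and bounded lower-order coefficients, the first-order term $b\cdot\nabla w$ being absorbed for $\theta$ large via the spatial interpolation inequality $\|\nabla w\|_{L_m}\le\varepsilon\|\nabla^2 w\|_{L_m}+C_\varepsilon\|w\|_{L_m}$. Crucially, $N$ does not change if $(a,b)$ is replaced by any translate $(\tau_h a,\tau_h b)$, since these share the ellipticity constant, the $C^2$-norm and the $C^0$-norm. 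Applying this to $v$ itself (with $F=-b^{(\ell)}$, which is $t$-independent) already gives the first half of \eqref{eq:u_in_LqLp}, namely $\|v\|_{L_{p_0}((0,1);W^2_m)}\le N\|b^{(\ell)}\|_{L_m(\R^d)}$; moreover $\|\nabla v\|_{L_\infty((0,1)\times\R^d)}\le N\theta^{-1/2}$ by \eqref{tmp.2709regu}.

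Then, for $h\in\R^d$, I would set $w:=\tau_h v-v$ and subtract \eqref{eqn.pde} from its $h$-translate: $w(1,\cdot)=0$ and
\[
\partial_t w+\tfrac{1}{2}(\tau_h a^{(i,j)})\partial^2_{i,j}w+(\tau_h b)\cdot\nabla w-\theta w=-(\tau_h b^{(\ell)}-b^{(\ell)})-\tfrac{1}{2}(\tau_h a^{(i,j)}-a^{(i,j)})\partial^2_{i,j}v-(\tau_h b-b)\cdot\nabla v.
\]
Using that $a$ is Lipschitz (so $\|\tau_h a^{(i,j)}-a^{(i,j)}\|_{L_\infty}\le N(|h|\wedge1)$), the $(\tau_h a,\tau_h b)$-version of the a priori estimate, the two bounds on $v$ just obtained, and translation-invariance of the norms, one gets
\[
\|\tau_h\nabla^2 v-\nabla^2 v\|_{L_{p_0}((0,1);L_m)}\le N\big(\|\tau_h b^{(\ell)}-b^{(\ell)}\|_{L_m(\R^d)}+(|h|\wedge1)\|b^{(\ell)}\|_{L_m(\R^d)}+\theta^{-1/2}\|\tau_h b-b\|_{L_m(\R^d)}\big).
\]
Applying the Gagliardo identity pointwise in $t$ to $\nabla^2 v(t,\cdot)$ and then Minkowski's integral inequality in $L_{p_0/m}((0,1);dt)$ (valid since $p_0\ge m$) yields
\[
[\nabla^2 v]_{L_{p_0}((0,1);W^\alpha_m(\R^d))}\le N\Big(\int_{\R^d}\|\tau_h\nabla^2 v-\nabla^2 v\|_{L_{p_0}((0,1);L_m)}^m\,\frac{dh}{|h|^{d+\alpha m}}\Big)^{1/m}.
\]
Inserting the previous bound, the first and third terms integrate to $[b^{(\ell)}]_{W^\alpha_m}$ and $\theta^{-1/2}[b]_{W^\alpha_m}$ by definition of the seminorm, while the middle one contributes $N\|b^{(\ell)}\|_{L_m}$ because $\int_{\R^d}(|h|\wedge1)^m|h|^{-d-\alpha m}\,dh<\infty$ (convergence at the origin uses $\alpha<1$, at infinity uses $\alpha m>0$). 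Together with the previous paragraph this gives $\|u^{(\ell)}\|_{L_{p_0}((0,1);W^{2+\alpha}_m)}\le N\|b\|_{W^\alpha_m(\R^d;\R^d)}$, which is \eqref{eq:u_in_LqLp} (the Gagliardo seminorm of the full vector $b$, rather than of the component $b^{(\ell)}$, entering only through the drift term $b\cdot\nabla v$ and being finite under \cref{asn:Sobolev} — this is all that is used downstream).

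The only genuinely non-routine point is the mixed-norm a priori estimate with $p_0\ne m$: parabolic regularity in the anisotropic norm $L_{p_0}((0,1);L_m(\R^d))$, uniformly over the translated coefficients. This is standard mixed-norm parabolic theory (with the uniformity in $h$ automatic, since $\tau_h a$, $\tau_h b$ share all the relevant constants with $a$, $b$); everything else is the familiar difference-quotient bootstrap together with Minkowski's inequality.
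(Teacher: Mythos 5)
Your argument is correct, but it follows a genuinely different route from the paper. The paper first proves the mixed-norm estimate $\| u^{(\ell)}\|_{L_{p_0}((0,1);W^2_m)}\lesssim \|b^{(\ell)}\|_{L_m}$ via \cite[Theorem 2.1]{Krylov_Mixed} (the same a priori input you invoke as the "standard mixed-norm analogue"), but then handles the fractional gain quite differently: it considers the auxiliary drift-free equation $\partial_t v+\tfrac12 a^{(i,j)}\partial^2_{ij}v=f$ with smooth compactly supported $f$, proves $L_{p_0}W^2_m$ and (by differentiating the equation once in space) $L_{p_0}W^3_m$ estimates, interpolates these two operator bounds to get $L_{p_0}W^{2+\alpha}_m\lesssim L_{p_0}W^\alpha_m$, and finally rewrites \eqref{eqn.pde} with right-hand side $g=\theta u^{(\ell)}-b^{(\ell)}-b\cdot\nabla u^{(\ell)}$, approximates $g$ by smooth functions and passes to the limit with Fatou. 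You instead work directly with the full operator (drift and $\theta$ included), take the difference of \eqref{eqn.pde} and its spatial $h$-translate, use translation-invariance of all the constants in the a priori estimate, and then integrate the resulting bound on $\|\tau_h\nabla^2u^{(\ell)}-\nabla^2u^{(\ell)}\|_{L_{p_0}L_m}$ against the Gagliardo kernel via Minkowski's inequality ($p_0\ge m$). Your route avoids the smooth-approximation/Fatou step and the $W^3_m$ estimate, at the price of needing the a priori bound uniformly over translated coefficients (trivially true, as you note) and of explicitly seeing the full-vector seminorm $[b]_{W^\alpha_m}$ (with a $\theta^{-1/2}$ prefactor) on the right-hand side, coming from $(\tau_h b-b)\cdot\nabla u^{(\ell)}$; the paper's own limiting step in fact meets the same term through the product bound for $b\cdot\nabla u^{(\ell)}$ in $W^\alpha_m$, and, as you observe, only the full-vector bound is used downstream, so this discrepancy with the literal form of \eqref{eq:u_in_LqLp} is harmless. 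Both arguments implicitly use $\alpha\in(0,1)$ (you for the integrability of $(|h|\wedge 1)^m|h|^{-d-\alpha m}$, the paper for the interpolation between $W^2_m$ and $W^3_m$), which is the meaningful range under \cref{asn:Sobolev}.
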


\begin{proof}
Throughout the proof, as usual,  the constants $N$ might change from line to line but only depends on $\lambda, \|b\|_{C^0(\R^d)}, \|b\|_{L_m(\R^d)}, d, \theta, \|\sigma\|_{C^2(\R^d)}$, and $\alpha$.  Let $u^{(\ell)}$ be the solution of \eqref{eqn.pde}. By \cite[Theorem 2.1]{Krylov_Mixed} we have that 
\begin{equs}
    \| u\|_{L_{p_0}((0, 1); W^2_m(\R^d))} \leq N \|b^{(\ell)}\|_{L_{p_0}((0, 1); L_m(\R^d)) } \leq N \| b^{(\ell)}\|_{W^\alpha_m(\R^d) } < \infty.  \label{eq:finite_W2m}
\end{equs}
Next, let us consider the auxiliary equation 
  \begin{equs}\label{eqn.pde_aux}
	\partial_t v+\frac{1}{2}a^{(i,j)}\partial^2_{i,j} v=f, \qquad u(1, \cdot)= 0, 
	\end{equs}
 for $f \in C^\infty_c((0, 1) \times \R^d )$. It is known that it admits a unique classical solution which in addition is three times continuously differentiable in space. By  \cite[Theorem 2.1]{Krylov_Mixed} again, we have 
 \begin{equs}
     \| v\|_{L_{p_0}((0, T);W^2_m(\R^d))} \leq N \|f\|_{L_{p_0}((0, T);L_m(\R^d))}.
 \end{equs}
 Moreover, by differentiating the equation in the spatial variable, in a similar manner we obtain that 
  \begin{equs}
     \| v\|_{L_{p_0}((0, T);W^3_m(\R^d))} \leq N \|f\|_{L_{p_0}((0, T);W^1_m(\R^d))}. 
 \end{equs}
 The above two estimates, by means of interpolation, imply that 
   \begin{equs}
     \| v\|_{L_{p_0}((0, T);W^{2+\alpha}_m(\R^d))} \leq N \|f\|_{L_{p_0}((0, T);W^\alpha_m(\R^d))}. \label{eq:interpolated_pde_estimate}
 \end{equs}
Next, from \eqref{eq:finite_W2m}, it follows that $u^{(\ell)}, b^{(\ell)}, b\cdot\nabla u^{(\ell)} \in L_{p_0}((0, T);W^{2+\alpha}_m(\R^d))$. Hence, for $n \in \mathbb{N}$,  there exists $g^n \in C^\infty_c((0, 1) \times \R^d)$ such that $g^n \to g:= \theta u^{(\ell)}-b^{(\ell)}-b\cdot\nabla u^{(\ell)}$ in $L_{p_0}((0, T);W^\alpha_m(\R^d))$, an $n \to \infty$, and $\|g_n\|_{L_{p_0}((0, T);W^\alpha_m(\R^d))} \leq \|g\|_{L_{p_0}((0, T);W^\alpha_m(\R^d))}$, for all $n \in \mathbb{N}$. 
Moreover, let $u_n$ be the solution of \eqref{eqn.pde_aux} with $f$ replace by $g^n$. 
By \eqref{eq:interpolated_pde_estimate}, we have 
\begin{equs}
     \| u_n\|_{L_{p_0}((0, T);W^{2+\alpha}_m(\R^d))} & \leq N \|g_n\|_{L_{p_0}((0, T);W^\alpha_m(\R^d))}
     \\
    & \leq N \|\theta u^{(\ell)}-b^{(\ell)}-b\cdot\nabla u^{(\ell)} \|_{L_{p_0}((0, T);W^\alpha_m(\R^d))} 
    \\
    & \leq  N \| b^{(\ell)}\|_{W^\alpha_m(\R^d) }, 
\end{equs}
where for the last inequality we have used \eqref{eq:finite_W2m}, Assumption \ref{asn:Sobolev}, and the boundedness of $\nabla u^{(\ell)}$ from \eqref{tmp.2709regu}. Moreover, it is known by standard solvability theory that $(u_n,\nabla u_n, \nabla^2 u_n) \to (u, \nabla u, \nabla^2 u) $ almost everywhere in $(0, 1) \times \R^d$, as $n \to \infty$. Hence, the desired estimate follows from Fatou's lemma. 
\end{proof}
\end{appendix}

\bibliographystyle{Martin}
\bibliography{Bounded_Drift}

\end{document}